\documentclass[11pt, a4paper, reqno]{amsart}
\usepackage{amsmath}
\usepackage{amsfonts}
\usepackage{amssymb}
\usepackage{amsthm}
\usepackage{url}
\usepackage{color}
\usepackage{array}
\usepackage{hyperref}

\newcommand{\R}{\mathbb{R}}

\newcommand{\N}{\mathbb{N}}

\DeclareMathOperator*{\diam}{diam}

\DeclareMathOperator*{\dist}{dist}

\DeclareMathOperator*{\Lip}{Lip}

\DeclareMathOperator{\Mod}{Mod}
\DeclareMathOperator{\capa}{Cap}
\DeclareMathOperator{\rcapa}{cap}

\DeclareMathOperator*{\rad}{rad}

\newcommand{\rcapap}{\text{cap}^\pip_p}

\def\vint_#1{\mathchoice%
	{\mathop{\kern 0.2em\vrule width 0.6em height 0.69678ex depth -0.58065ex
			\kern -0.8em \intop}\nolimits_{\kern -0.4em#1}}%
	{\mathop{\kern 0.1em\vrule width 0.5em height 0.69678ex depth -0.60387ex
			\kern -0.6em \intop}\nolimits_{#1}}%
	{\mathop{\kern 0.1em\vrule width 0.5em height 0.69678ex depth -0.60387ex
			\kern -0.6em \intop}\nolimits_{#1}}%
	{\mathop{\kern 0.1em\vrule width 0.5em height 0.69678ex depth -0.60387ex
			\kern -0.6em \intop}\nolimits_{#1}}}

\newcommand{\Om}{\Omega}

\newcommand{\eps}{\varepsilon}
\newcommand{\pip}{\varphi}

\newcommand{\ch}{\text{\raise 1.3pt \hbox{$\chi$}\kern-0.2pt}}

\newcommand{\Qpip}{Q_{\mu_\pip}^-}

\theoremstyle{plain}
\newtheorem{theorem}[equation]{Theorem}
\newtheorem{lemma}[equation]{Lemma}
\newtheorem{corollary}[equation]{Corollary}
\newtheorem{proposition}[equation]{Proposition}

\numberwithin{equation}{section}

\theoremstyle{definition}
\newtheorem{definition}[equation]{Definition}
\newtheorem{deff}[equation]{Definition}
\newtheorem{example}[equation]{Example}

\theoremstyle{remark}
\newtheorem{remark}[equation]{Remark}

\begin{document}
	
	\title[Dirichlet problem on unbounded domains]{Solving a Dirichlet problem on unbounded domains via a conformal transformation}
	
	\author[Gibara, Korte, Shanmugalingam]{Ryan Gibara, Riikka Korte, Nageswari Shanmugalingam}
	
	\date{\today}
	
	\keywords{Uniform domain, metric measure spaces, conformal transformation, Dirichlet problem, Poincar\'e inequalities, doubling
		measures.}
	
	\subjclass{Primary: 31E05; Secondary: 30L99, 49Q05, 26A45.}
	
	\begin{abstract}
		In this paper, we solve the $p$-Dirichlet problem for Besov boundary data on unbounded uniform domains with
		bounded boundaries when the domain is equipped with a doubling measure satisfying a Poincar\'{e} inequality. 
		This is accomplished 
		by studying a class of transformations that have been recently shown to render the domain bounded while maintaining uniformity. 
		These transformations conformally deform the metric and measure in a way that depends on the distance to the boundary 
		of the domain and, for the measure, a parameter $p$. We show that the transformed measure is doubling and the 
		transformed domain supports a Poincar\'{e} inequality. This allows us to transfer known results for bounded 
		uniform domains to unbounded ones, including trace results and Adams-type inequalities, culminating in a 
		solution to the Dirichlet problem for boundary data in a Besov class. 
	\end{abstract}
	
	\maketitle
	
	\tableofcontents

	\section{Introduction}

	In studying Dirichlet and Neumann boundary-value problems on domains in metric measure spaces of bounded geometry,
	existence of the solution via the direct method of the calculus of variations requires that we are able to bound the $L^p$-norm of
	a Sobolev function on the domain (with zero boundary values) by the Sobolev energy norm of the function, thus
	ensuring the boundedness in the Sobolev norm of an energy minimizing sequence of Sobolev functions in the domain.
	When the domain is a bounded uniform domain, this is always possible thanks to the Poincar\'e inequality, for we can
	then envelop the domain in a sufficiently large ball. 
	Bounded uniform domains play a central role in potential theory as many of the classical results about Dirichlet problems on 
	smooth Euclidean domains hold for such domains in metric measure spaces. In particular, they 
	are extension domains for several function spaces~\cite{BS,VG} and traces, to the boundary, of Sobolev-class
	functions on the domain belong to certain Besov classes~\cite{Maly} of functions on the boundary. 
	However, when the domain, albeit uniform, is not
	bounded, these properties might not hold. 
	Therefore, it is beneficial to have a transformation of the domain into a bounded
	uniform domain.

	One such transformation is sphericalization
	as defined in the work of~\cite{BHK}, which transforms an unbounded metric space 
	$X$ into a bounded metric space whose completion is topologically its one-point compactification. 
	A generalization of the familiar stereographic projection, sphericalization has been used in the study of 
	quasiconformal geometry, including the study of quasi-M\"obius maps and Gromov hyperbolic 
	spaces, see for example~\cite{BK,BHX,HSX,L,ZLL}. 
	Sphericalization is known to preserve many desirable properties of the metric space $X$, see for 
	example~\cite{DL1,DL2}. In particular, if $X$ is a uniform domain (or a uniform space, in the 
	language of~\cite{BHK}), then its sphericalization is also a uniform domain, see~\cite{BHX}.
	
	On the other hand, sphericalization distorts the metric of $X$ everywhere, including near its boundary if $X$ is not 
	complete. This poses a problem if one is interested in gaining information about, or preserving the geometry of, the boundary
	of the original unbounded domain $X$ when $\partial X=\overline{X}\setminus X$  
	itself is bounded, for example as in~\cite{CKKSS}. 
	This issue was addressed in~\cite{GS}, where a class of transformations was identified such that 
	unbounded uniform domains are transformed to bounded uniform domains in such a way that the 
	inner length metric is not perturbed, locally, near the boundary. The purpose of the present paper 
	is to explore potential theory on these transformed domains, with the view of 
	applying this in ongoing work on boundary-value problems on unbounded domains. 
	
	\medskip
	
	\noindent{\bf The setting:} 
	We consider a locally compact, non-complete metric space $(\Om,d)$, equipped with a doubling 
	measure $\mu$, and supporting a Poincar\'{e} inequality, at least for balls with radius at most some fixed 
	constant times the distance from its center to the boundary. We assume that $\Om$ is unbounded and is
	uniform in its completion $\overline{\Om}$ with bounded boundary $\partial\Om:=\overline{\Om}\setminus\Om$, and 
	we fix a monotone decreasing continuous function $\pip:(0,\infty)\to(0,\infty)$ that will act as a 
	dampening function, see Definition~\ref{def:phi} for the specific assumptions on $\pip$. As $\Om$ is a 
	uniform domain, it is rectifiably connected, that is, pairs of points in $\Om$ can be connected by curves in 
	$\Om$ of finite length. As such, we may use $\pip$ to construct a new metric $d_\pip$ on $\Om$ by setting 
	\[
	d_\pip(x,y):=\inf_\gamma\ \int_\gamma\pip\circ d_\Om\, ds,
	\]
	with the infimum ranging over all rectifiable curves in $\Om$ with end points $x,y\in\Om$. 
	Here, $\int_\gamma h\, ds:=\int_\gamma h(\gamma(\cdot))\,ds$ is the path integral with respect 
	to the arc-length parametrization of the
	rectifiable curve $\gamma$, see for example~\cite[Chapter 5]{HKST}, and $d_\Om$ is defined by 
	$d_\Om(x)=\dist(x,\partial\Om)$. We 
	fix $1\le p<\infty$ and 
	also construct a new measure $\mu_\pip$ supported on $\Om$, absolutely continuous with $\mu$, with 
	Radon-Nikodym derivative $\pip(d_\Omega(x))^p$.
	
	\medskip
	
	In~\cite{GS}, it was shown that $\Om_\pip:=\overline{\Om\cup\partial\Om}^\pip\setminus\partial\Om$,
	where $\overline{A}^\pip$ is the completion of $A\subset\overline{\Om}$ with respect to the metric $d_\pip$, 
	differs from $\Om$ by one point, which we denote by $\infty$. The transformed space $(\Om_\pip,d_\pip)$ was 
	shown to be uniform in its completion and to have boundary $\partial{\Om_\pip}=\partial\Om$. Moreover, 
	$d_\pip$ and $d$ are uniformly locally bi-Lipschitz near $\partial\Om$. The present paper begins by showing the following 
	(see Theorems~\ref{thm:double} and~\ref{thm:PI-no-infty}).
	
	\begin{theorem}
		The metric measure space $(\Om_\pip,d_\pip,\mu_\pip)$ is doubling and supports a $p$-Poincar\'{e} inequality.
	\end{theorem}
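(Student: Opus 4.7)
The strategy is to reduce properties of $(\Om_\pip, d_\pip, \mu_\pip)$ to those of $(\Om, d, \mu)$ via the annular decomposition
\[
A_k := \{x\in\Om : 2^k \le d_\Om(x) < 2^{k+1}\}, \qquad k\in\Z.
\]
By monotonicity of $\pip$, on each $A_k$ we have $\pip\circ d_\Om \simeq \pip(2^k)$. Consequently, $d_\pip$ is bi-Lipschitz to $\pip(2^k)\,d$ on a controlled enlargement of $A_k$ (refining the local bi-Lipschitz result of \cite{GS}), and $d\mu_\pip \simeq \pip(2^k)^p\, d\mu$ there. Because $\partial\Om$ is bounded and $\Om$ is unbounded, large positive $k$ captures neighbourhoods of the added point $\infty$.

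For the doubling property, fix a ball $B_\pip := B_\pip(x,r)$. If $r$ is small enough that $B_\pip$ lies within a bounded number of consecutive annuli $A_k$, then doubling of $\mu_\pip$ on $B_\pip$ follows from doubling of $\mu$ on the corresponding $d$-ball, with the factors $\pip(2^k)^p$ cancelling in the ratio $\mu_\pip(B_\pip(x,2r))/\mu_\pip(B_\pip(x,r))$. Otherwise $B_\pip$ must reach near $\infty$, and I would estimate $\mu_\pip(B_\pip)$ by the sum $\sum_{k \text{ large}}\mu_\pip(A_k \cap B_\pip)$. The structural assumptions on $\pip$ in Definition~\ref{def:phi} (which in particular force $\diam_\pip(\Om_\pip) < \infty$) ensure this sum behaves geometrically, so comparison of the sums at radii $r$ and $2r$ gives a uniform doubling constant.

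For the $p$-Poincar\'{e} inequality the key identity is that if $g$ is a $p$-weak upper gradient of $u$ in $(\Om_\pip, d_\pip)$ then $(\pip\circ d_\Om)\, g$ is a $p$-weak upper gradient of $u$ in $(\Om, d)$, with
\[
\int g^p\, d\mu_\pip = \int \bigl((\pip\circ d_\Om)\, g\bigr)^p\, d\mu.
\]
For $d_\pip$-balls contained in a bounded union of annuli this identity, combined with the assumed Whitney-scale $p$-Poincar\'{e} inequality on $(\Om, d, \mu)$, transfers the inequality directly. For larger $d_\pip$-balls I would chain the ball by Whitney-type sub-balls and telescope, using the doubling of $\mu_\pip$ established above together with the uniformity of $\Om_\pip$ in its completion (from \cite{GS}) to supply controlled curves joining pairs of points.

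The main obstacle I anticipate is the case of a ball that spans from a neighbourhood of $\partial\Om$ all the way to a neighbourhood of $\infty$: such a ball crosses arbitrarily many annular scales, and obtaining a telescoping bound with a constant independent of $x$ and $r$ requires exploiting simultaneously the decay of $\pip$ (to tame the contributions from annuli near $\infty$) and the uniformity of $\Om_\pip$ (to keep the chain efficient so that only a controlled number of sub-balls at each scale is required).
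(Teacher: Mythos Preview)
Your approach to doubling is essentially the paper's: a case split according to whether the ball is far from $\infty$ (so it sits in a bounded number of annuli and is a quasiball with respect to $d$, whence doubling of $\mu$ transfers) or near $\infty$ (so its $\mu_\pip$-measure is comparable to a tail sum $\sum_{n\ge m}\pip(2^n)^p\mu(\Om_n)$, controlled by condition~(6) of Definition~\ref{def:phi}). The paper just makes the case analysis finer (Lemmas~\ref{lem:balls-at-infinity}--\ref{lem:intermediate}).

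For the Poincar\'e inequality your chaining strategy is also the paper's, but there is a genuine gap in the ingredient you invoke. You appeal to the uniformity of $\Om_\pip$ in its completion (the result of \cite{GS}) to build the Whitney chain. The problem is that $\partial\Om_\pip=\partial\Om$, so a chain ball that is sub-Whitney in $\Om_\pip$ is only guaranteed to stay away from $\partial\Om$; near $\infty$ such a ball can have $d_\pip$-radius comparable to $\dist_\pip(\cdot,\partial\Om)\approx 1$, hence contain $\infty$ and span infinitely many annuli. On such a ball $\pip\circ d_\Om$ is not approximately constant, the ball is not a $d$-quasiball, and the sub-Whitney $p$-Poincar\'e inequality of $(\Om,d,\mu)$ cannot be transferred to it. Your proposed fix---``exploiting the decay of $\pip$''---does not by itself produce a Poincar\'e inequality on a single chain ball containing $\infty$.

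The paper resolves this by first proving a new fact, Theorem~\ref{thm:uniform}: the \emph{punctured} domain $\Om_\pip\setminus\{\infty\}$ is itself uniform (this uses an annular quasiconvexity lemma around $\infty$, Lemma~\ref{lem:ann-qcvx}, to reroute uniform curves that would otherwise pass through $\infty$). With this in hand the Bj\"orn--Shanmugalingam chain condition is applied in $\Om_\pip\setminus\{\infty\}$, so every chain ball is automatically sub-Whitney with respect to \emph{both} $\partial\Om$ and $\{\infty\}$; Lemma~\ref{lem:nearby-points} then makes each chain ball a $d$-quasiball on which the assumed sub-Whitney Poincar\'e inequality holds, and \cite[Theorem~4.4]{BS} assembles these into a global inequality on $\Om_\pip\setminus\{\infty\}$. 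The extension to $\Om_\pip$ and $\overline{\Om_\pip}^\pip$ is then a soft step via \cite[Proposition~7.1]{AS}. So the missing idea in your proposal is precisely the uniformity of the punctured space; once you have it, your chaining argument goes through exactly as you describe.
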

	
	In the process of proving the above theorem, we verify that $\Om_\pip\setminus\{\infty\}$ is also a uniform
	domain (see Theorem~\ref{thm:uniform}), supplementing the results from~\cite{GS}.
	
	With these tools in hand, in Sections~\ref{Sect:Dirichlet} and~\ref{Sec:eight}
	we proceed to study potential theory on the domain $\Om_\pip$. We show that a function is 
	$p$-harmonic on $(\Om, d,\mu)$ if and only if it is $p$-harmonic on 
	$(\Om_\pip\setminus\{\infty\},d_\pip,\mu_\pip)$. Furthermore, when $p$ is 
	sufficiently large and $\pip(t)=\min\{1,t^{-\beta}\}$ for $t>0$ and some fixed  large enough
	$\beta>1$, $p$-harmonic functions on $(\Om_\pip\setminus\{\infty\},d_\pip,\mu_\pip)$ 
	can be extended to become $p$-harmonic on all of $(\Om_\pip,d_\pip,\mu_\pip)$. We show in
	Proposition~\ref{thm:bdry-mod} that when the index $p$ is small, the $p$-capacity of $\{\infty\}$ is zero;
	but if $p$ is sufficiently large, then the $p$-capacity of $\{\infty\}$ is positive. We also show that when the
	measure on $\partial\Om$ satisfies a codimensionality condition with respect to $\mu$, the trace class of the Dirichlet-Sobolev
	space $D^{1,p}(\Om,d,\mu)$ is a Besov space of functions on $\partial\Om$, see Proposition~\ref{prop:Dp-trace},
	and an Adams-type inequality holds for the measure on $\partial\Om$ and functions in $D^{1,p}(\Om,d,\mu)$,
	see Theorem~\ref{thm:Adams}. Interestingly, it turns out that under this codimensionality condition for the boundary,
	each (relative) ball in $\partial\Om$ has positive $p$-capacity in $\overline{\Om_\pip}^\pip$, see
	Proposition~\ref{prop:bdryCap}.
	
	Using the potential theory developed in Sections~\ref{Sect:Dirichlet} and~\ref{Sec:eight}, we 
	obtain the following culminating theorem regarding the Dirichlet problem in Section~\ref{Sec:nine}. In what follows, the boundary data $f$ is taken to be in the Besov space $B^{1-\theta/p}_{p,p}$ on the boundary $\partial\Om$ with respect to a codimensional measure $\nu$, see Definition~\ref{Besov} for the definition of the Besov space and Proposition~\ref{prop:Besov-trace} regarding the trace operator $T$ acting on the Dirichlet-Sobolev space $D^{1,p}(\Om,\mu)$; for the description of
	$D^{1,p}$, we refer the reader to the paragraph before Definition~\ref{def:poin} below.
	
	\begin{theorem}\label{thm:one-point-two}
		Let $1<p<\infty$ and $\nu$ be a measure on $\partial\Om$ that is $\theta$-codimensional with respect to the measure $\mu$ on $\Om$
		with $0<\theta<p$, as described at the beginning of Section~\ref{Sec:eight}.
		Let $f\in B^{1-\theta/p}_{p,p}(\partial\Om,\nu)$. Then there is a
		function $u\in D^{1,p}(\Om,\mu)$ such that
		\begin{itemize}
			\item $u$ is $p$-harmonic in $(\Om,d,\mu)$,
			\item $Tu=f$ on $\partial\Om$ $\nu$-a.e..
		\end{itemize}
		If $\Om$ is $p$-parabolic, then the solution $u$ is unique. If $\Om$ is $p$-hyperbolic, then 
		for each solution $u$ of the problem we have $\lim_{\Om\ni y\to\infty}u(y)$ exists; this limit uniquely determines
		the solution.
	\end{theorem}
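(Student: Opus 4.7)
The plan is to transfer the Dirichlet problem to the transformed bounded uniform domain $(\Om_\pip, d_\pip, \mu_\pip)$, solve it there by the direct method of the calculus of variations using the Poincar\'e inequality established in Theorem~\ref{thm:PI-no-infty}, and then pull the solution back to $(\Om, d, \mu)$ via the invariance of $p$-harmonicity proved in Section~\ref{Sect:Dirichlet}. Since $\partial\Om_\pip = \partial\Om$ and $d_\pip$ is locally bi-Lipschitz to $d$ near the boundary, the measure $\nu$ retains its $\theta$-codimensionality with respect to $\mu_\pip$ near $\partial\Om$, so $f$ lies in the corresponding Besov class in the transformed setting.

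For existence, I would first use the converse of Proposition~\ref{prop:Besov-trace} to extend $f$ to a function $F \in D^{1,p}(\Om_\pip, \mu_\pip)$ with $TF = f$ $\nu$-a.e.\ on $\partial\Om$. Then I would minimize the $p$-energy $\int_{\Om_\pip} g_v^p\, d\mu_\pip$ over all $v$ with $v - F$ having zero trace on $\partial\Om$. Coercivity of this functional follows from the Poincar\'e inequality together with the fact that $\Om_\pip$ is bounded, so standard lower-semicontinuity arguments produce an energy minimizer $u$. This minimizer is $p$-harmonic on $\Om_\pip \setminus \partial\Om$ by the Euler--Lagrange machinery for upper gradients; in particular it is $p$-harmonic on $\Om_\pip \setminus \{\infty\}$, and when needed (in the $p$-hyperbolic/large-$p$ regime) Proposition~\ref{thm:bdry-mod} and the extension results of Section~\ref{Sec:eight} allow us to extend $u$ continuously through $\{\infty\}$ as a $p$-harmonic function on all of $\Om_\pip$. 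Transferring back via the equivalence of $p$-harmonicity between $(\Om,d,\mu)$ and $(\Om_\pip\setminus\{\infty\}, d_\pip,\mu_\pip)$ gives the desired $u \in D^{1,p}(\Om,\mu)$ with $Tu = f$ $\nu$-a.e.

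The delicate part is the uniqueness dichotomy, where the capacity of $\{\infty\}$ is the decisive invariant. In the $p$-parabolic case, $\capa_p(\{\infty\}) = 0$ in $\overline{\Om_\pip}^\pip$ by Proposition~\ref{thm:bdry-mod}, so $\{\infty\}$ is a removable singularity for $p$-harmonic functions with finite Dirichlet energy; the difference $u_1 - u_2$ of two candidate solutions extends as a $p$-harmonic function on all of $\Om_\pip$ with zero trace on $\partial\Om_\pip = \partial\Om$, and the standard comparison principle on the bounded domain $\Om_\pip$ forces $u_1 \equiv u_2$. In the $p$-hyperbolic case, $\capa_p(\{\infty\})>0$, and any $p$-harmonic function of finite energy on $\Om_\pip \setminus \{\infty\}$ with finite Dirichlet energy extends continuously across $\{\infty\}$ (by the quasicontinuity of $N^{1,p}$ functions together with the fact that points of positive capacity are Lebesgue points); this yields the existence of $\lim_{\Om \ni y \to \infty} u(y)$. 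Two solutions with the same boundary trace $f$ and the same value at $\infty$ then agree on $\partial\Om_\pip \cup \{\infty\}$, which is the full boundary of $\Om_\pip \setminus \{\infty\}$, so the comparison principle on this bounded uniform domain again forces them to coincide. The main obstacle I anticipate is verifying rigorously that the energy-minimizing extension across $\{\infty\}$ is the correct one and that the limit at $\infty$ actually exists in the classical sense, rather than merely in a capacitary sense; this requires combining the Adams-type inequality of Theorem~\ref{thm:Adams} with the doubling and Poincar\'e properties of $\mu_\pip$ at the point $\infty$.
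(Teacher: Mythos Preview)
Your overall strategy---transfer to $(\Om_\pip,d_\pip,\mu_\pip)$, solve there by the direct method, and pull back via Proposition~\ref{prop:harm-harm}---is exactly the paper's approach, and your existence argument is fine. However, your uniqueness argument in the $p$-parabolic case contains a genuine error: you claim that the difference $u_1-u_2$ of two solutions ``extends as a $p$-harmonic function on all of $\Om_\pip$ with zero trace.'' For $p\neq 2$ the $p$-Laplacian is nonlinear, so the difference of two $p$-harmonic functions is \emph{not} $p$-harmonic, and the argument as written fails.

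The fix, which is what the paper does, is to avoid the difference entirely. Each candidate solution $v$ is itself $p$-harmonic in $(\Om_\pip\setminus\{\infty\},d_\pip,\mu_\pip)$ by Proposition~\ref{prop:harm-harm}. Since $\capa_p^\pip(\{\infty\})=0$ in the parabolic case, the removable-singularity result of~\cite{Bjo} (Remark~\ref{rem:7.8}) applies to $v$ individually, so $v$ extends to a $p$-harmonic function on all of $\Om_\pip$ with $Tv=f$. Now both $u$ and $v$ solve the \emph{same} Dirichlet problem on the bounded domain $\Om_\pip$, and uniqueness for that problem (via strict convexity of the $p$-energy, as in~\cite{S2}) forces $u=v$. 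No linearity is invoked.

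For the hyperbolic case your outline is correct, and you rightly flag the passage from ``Lebesgue point at $\infty$'' to ``classical limit at $\infty$'' as the delicate step. The paper handles this not via the Adams inequality but by observing that $\{\infty\}$, having positive $p$-capacity, is uniformly $p$-fat, hence a regular boundary point for $\Om_\pip\setminus\{\infty\}$ in the sense of~\cite{BMS}; regularity then upgrades the capacitary trace value to a genuine limit. Your final uniqueness claim (two solutions with the same trace on $\partial\Om$ and the same limit at $\infty$ must coincide) is correct and matches the paper, again by uniqueness of the Dirichlet solution on the bounded uniform domain $\Om_\pip\setminus\{\infty\}$ rather than by subtracting.
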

	
	\medskip
	
	{\bf Acknowledgments.} The research of N.S. is partially supported by NSF grant~\#DMS-2054960.
	Part of the research in this paper was conducted during the research stay of N.S. at the 
	Mathematical Sciences Research Institute (MSRI, Berkeley, CA) as part of the program
	\emph{Analysis and Geometry in Random Spaces} which is 
	supported by the National Science Foundation (NSF) under Grant No.~1440140, during Spring 2022. She 
	thanks MSRI for its kind hospitality. Part of the research was done while R.K. visited  University of Cincinnati. 
	She wishes to thank University of Cincinnati for its kind hospitality. 
	The authors thank the two referees for a careful reading of the manuscript and for comments that helped
	improve the exposition of the paper.
	The authors also thank 
	A.~Tyulenev for suggesting a correction to an earlier version of Lemma~\ref{lem:maximal}.

	\section{Construction of the transformation of metric and measure} 
	
	Let $(\Om, d)$ be an unbounded, locally compact, non-complete metric space such that $\Om$ is a \emph{uniform 
		domain} in its completion $\overline{\Om}$. A uniform domain is one for which there exists a constant 
	$C_U\ge 1$ satisfying the following property:
	for each $x,y\in\Om$ with $x\ne y$, there is a \emph{uniform curve} with end points $x$ and $y$, that is, a curve $\gamma$ such that 
	\begin{itemize}
		\item the length (with respect to the metric $d$) of the curve $\gamma$ satisfies $\ell_d(\gamma)\le C_U\, d(x,y)$,
		\item for each $z$ in the trajectory of $\gamma$, we have
		\[
		\min\{\ell_d(\gamma[x,z]),\ell_d(\gamma[z,y])\}\le C_U\, d_\Omega(z).
		\]
	\end{itemize}
	Here, for two points $w_1,w_2$ in the trajectory of $\gamma$, we represent each segment of $\gamma$
	with end points $w_1,w_2$ by $\gamma[w_1,w_2]$. Moreover,
	$d_\Omega(x):=\dist(x,\partial\Omega)$ for $x\in\overline{\Omega}$, and $\partial\Om=\overline\Omega\setminus\Omega$.
	
	Throughout this paper, we set $n_0$ to be the smallest integer such that 
	\[
	2^{n_0-1}\le C_U<2^{n_0}.
	\] 
	We will also assume that $\partial\Om$ is bounded, that $\mu$ is a doubling Radon measure supported on
	$\Om$ with doubling constant $C_\mu\geq{1}$, and that $(\Om,d,\mu)$ supports a sub-Whitney $p$-Poincar\'{e} inequality for some fixed $1\leq p <\infty$, see Section~\ref{sec:background} for the definitions. 
	
	The notation $A\lesssim B$ will be used to mean that there exists a constant $C>0$, depending only on structural data, such that $A\leq CB$; furthermore, the notation $A\approx B$ means that $A\lesssim B$ and $A\gtrsim B$. 
	
	\begin{deff}\label{def:phi}
		In this paper, we fix a monotone decreasing continuous function $\pip:(0,\infty)\to(0,1]$ such that the following hold:
		\begin{enumerate}
			\item $\pip(t)=1$ when $0<t\le 1$.
			\item We have
			\begin{equation}\label{finite}
				\int_0^\infty\! \pip(t)\, dt<\infty.
			\end{equation}
			\item There is a constant $C_\pip\ge 1$ for which we have $\pip(t)\le C_\pip\,\pip(2t)$ for all $t>0$ (that is, $\pip$ satisfies a reverse doubling condition).
			\item There is some $\tau>2$ such that $\pip(t)\ge \tau \pip(2t)$ (thus requiring $C_\pip>2$, as well).
			\item For all positive integers $m$, we have 
			\begin{equation}\label{additionalpip}
				2^{m}\pip(2^m)\leq \sum_{n=m}^\infty 2^n\pip(2^n) \lesssim 2^{m}\pip(2^m)
			\end{equation}
			(and, indeed, this condition follows from Condition~(4) above, but we list it here for it is used extensively in this paper).
			\item For all positive integer $m$,
			\begin{equation}\label{additionalmupip}
				\pip(2^m)^p\mu(\Om_m)\le \sum_{n=m}^\infty \pip(2^n)^p\mu(\Om_n)  \lesssim  \pip(2^m)^p\mu(\Om_m),
			\end{equation}
			where, for $n>0$,
			\[
			\Om_n:=\{x\in\Om\, :\, 2^{n-1}<d_\Om(x)\le 2^n\}.
			\]
		\end{enumerate}
	\end{deff}
	
	The last condition is needed in order to know that $\mu_\pip$ is finite and doubling,
	see below for the definition of $\mu_\pip$.
	Examples of functions $\pip$ satisfying all the necessary conditions include $\pip(t)=\min\{1,t^{-\beta}\}$  or
	$\pip(t)=\min\{1,t^{-\beta}\, \log(e-1+t)\}$ for some sufficiently large fixed $\beta>1$ depending on $p$ and the doubling property of $\mu$ (see the condition on $\beta$ in Lemma~\ref{lem:lower-mass}).
	\\
	
	\noindent {\bf Construction:}
	For $1\leq p<\infty$, we wish to transform the geometry of $\Om$ by weighting 
	both the metric and the measure on $\Om$ using $\pip$ in the following way.
	
	We transform the metric $d$ on $\Omega$ into $d_\pip$ by
	setting 
	\[
	d_\pip(x,y):=\inf_\gamma \ell_\pip(\gamma):= \inf_\gamma\int_\gamma \pip(d_\Omega(\gamma(t)))\, dt
	\]
	with the infimum ranging over all rectifiable curves $\gamma$ in $\Omega$ with end points $x$ and $y$ in $\Om$. 
	Note that $\Om$ is rectifiably connected as it is uniform. The notation $B_{d_\pip}$ and $B_d$ will be used for balls 
	taken with respect to the metric $d_\pip$ and $d$, respectively. All balls will be assumed to come with a prescribed center and radius.  
	
	The measure $\mu$ on $\Om$ is transformed into the measure $\mu_\pip$, absolutely continuous with respect to $\mu$, with 
	\[
	d\mu_\pip(x)=\pip(d_\Omega(x))^p\, d\mu.
	\]
	Note that $\mu_\pip$ depends not only on $\pip$ but also on the choice of $p$; however, $1\leq p<\infty$ is fixed and we 
	suppress the dependence on $p$ in the notation. We credit~\cite{BBL} for the idea of 
	considering transformations of measures that are allowed to depend on $p$. 
	
	Now we have two identities for $\Om$; namely, $(\Om,d,\mu)$ and $(\Om,d_\pip,\mu_\pip)$. 
	Consider the set $\Om_\pip:=\overline{\Om\cup\partial\Om}^\pip\setminus\partial\Om$, where the completion 
	is taken with respect to $d_\pip$ and $\partial\Om:=\overline{\Om}\setminus{\Om}$. In~\cite{GS}, it was shown 
	that there is only one point in $\Om_\pip\setminus\Om$, which we denote by $\infty$. Moreover, 
	$\partial\Om_\pip=\partial{\Om}$ and $(\Om_\pip,d_\pip)$ is uniform in its completion. In~\cite{GS}, the uniform domain properties of
	the transformed domain $\Om_\pip$ were studied; many of the tools developed there will be used in the present paper.
	The goal of this paper is to investigate properties related to $\mu_\pip$, and apply these properties
	to the study of potential theory and the Dirichlet problem on unbounded uniform domains with bounded boundaries.
	\\
	
	\noindent {\bf Basic Lemmas:} We now recall some preliminary lemmas from \cite{GS} as well as some simple consequences that will be useful throughout the paper. In what follows, 
	we very often break up $\Omega$ into bands in the following way. 
	
	\begin{definition}
		We set 
		\[
		\Om_0:=\{x\in\Om\, :\, d_\Om(x)\le 1\},
		\]
		and for positive integers $n$ we set
		\[
		\Om_n:=\{x\in\Om\, :\, 2^{n-1}<d_\Om(x)\le2^n\}.
		\]
		Note that $\Om=\bigcup_{n\ge0}\Om_n$.
	\end{definition}
	
	Since $\overline{\Om_\pip}^\pip$ is compact, see Proposition~\ref{prop:compactness},  we have that 
	$\Om_\pip$ is locally compact, and hence a simple topological argument gives the following lemma.
	
	\begin{lemma}\label{lem:geodesic}
		Let $x\in\Om_m$ for some positive integer $m$. 
		Then there is a geodesic in $\Om_\pip$, with respect to the metric $d_\pip$, connecting $x$ and $\infty$.
	\end{lemma}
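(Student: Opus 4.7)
The approach is the classical variational construction of geodesics, specialized to our setting via the compactness of $\overline{\Om_\pip}^\pip$ and the uniformity of $\Om_\pip$. I would take a length-minimizing sequence of curves from $x$ to $\infty$, extract an Arzel\`a--Ascoli limit in the compact completion, and then verify that this limit lies inside $\Om_\pip$ and not merely in its completion.

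First, $L:=d_\pip(x,\infty)$ is finite: a direct outward curve from $x\in\Om_m$ through the successive bands $\Om_n$, $n\ge m$, has $d_\pip$-length bounded by $\sum_{n\ge m}2^n\pip(2^n)\lesssim 2^m\pip(2^m)<\infty$ via condition~(5) in Definition~\ref{def:phi}. Choose rectifiable curves $\gamma_k$ in $\Om_\pip$ from $x$ to $\infty$ with $\ell_\pip(\gamma_k)\to L$, reparametrize each with constant $d_\pip$-speed on $[0,1]$ (making $\gamma_k$ an $L_k$-Lipschitz map into $\overline{\Om_\pip}^\pip$ with $L_k=\ell_\pip(\gamma_k)$ uniformly bounded), and apply the Arzel\`a--Ascoli theorem---using the compactness of $\overline{\Om_\pip}^\pip$ (Proposition~\ref{prop:compactness})---to extract a subsequence converging uniformly to a Lipschitz curve $\gamma\colon[0,1]\to\overline{\Om_\pip}^\pip$ with $\gamma(0)=x$ and $\gamma(1)=\infty$. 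Because $d_\pip(\gamma(s),\gamma(t))=\lim_k d_\pip(\gamma_k(s),\gamma_k(t))\le L|s-t|$, the limit $\gamma$ is $L$-Lipschitz and hence $\ell_\pip(\gamma)\le L$; combined with the triangle inequality $\ell_\pip(\gamma)\ge d_\pip(x,\infty)=L$ this shows $\gamma$ is length-minimizing, and the bound $d_\pip(x,\gamma(t))\le tL<L$ for $t<1$ rules out $\gamma(t)=\infty$ at any interior time.

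The main obstacle is ruling out $\gamma(t)\in\partial\Om$ for some $t\in[0,1]$. I would first establish that curves from $x$ to $\infty$ staying at positive $d_\pip$-distance from $\partial\Om$ exist: by uniformity of $\Om_\pip$ in its completion there is a uniform curve $\eta$ from $x$ to $\infty$ with $\ell_\pip(\eta)\le C_U L$, and the uniformity condition
\[
d_\pip(\eta(t),\partial\Om)\ge\min\bigl(\ell_\pip(\eta[x,\eta(t)]),\,\ell_\pip(\eta[\eta(t),\infty])\bigr)/C_U,
\]
together with $d_\pip(x,\partial\Om)\ge 1$ (since $d_\Om(x)\ge 1$ and $\pip\equiv 1$ on $(0,1]$) and $d_\pip(\infty,\partial\Om)>0$ (from local compactness of $\Om_\pip$ at $\infty$), confines $\eta$ to a compact set $K_\delta:=\{z\in\overline{\Om_\pip}^\pip:d_\pip(z,\partial\Om)\ge\delta\}\subset\Om_\pip$ for a suitable $\delta>0$. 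Restricting the minimization to curves in $K_\delta$ and repeating the Arzel\`a--Ascoli argument there produces a length-minimizer inside $\Om_\pip$, provided one verifies that $\inf\{\ell_\pip(\sigma):\sigma\subset K_\delta, \sigma\text{ from }x\text{ to }\infty\}=L$. This last identification is the essential difficulty, and I would address it by showing that any near-minimizer dipping into the region $\{d_\pip(\cdot,\partial\Om)<\delta\}$ can be replaced by one in $K_\delta$ of comparable length---using uniform curves in the original domain $\Om$ to shortcut each excursion into the low-distance region---so that the restricted infimum cannot exceed $L$.
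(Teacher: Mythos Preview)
The paper itself supplies essentially no proof here---only the remark that compactness of $\overline{\Om_\pip}^\pip$ and local compactness of $\Om_\pip$ make ``a simple topological argument'' possible---so your Arzel\`a--Ascoli extraction in the compact completion is exactly the natural way to fill in what the authors leave implicit, and your steps through ruling out $\gamma(t)=\infty$ for $t<1$ are correct.

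The difficulty you flag in the last paragraph is real, but your proposed resolution does not close it. Replacing an excursion of $d_\pip$-length $\ell$ into $\{d_\pip(\cdot,\partial\Om)<\delta\}$ by a uniform curve in $\Om$ between the excursion's endpoints produces a detour of length at most $C_U\ell$, not $\ell$; the modified curve therefore has length at most $L+\eps+(C_U-1)\ell$, which yields only the bound $\inf_{K_\delta}\le C_U L$ rather than $\inf_{K_\delta}=L$. Nothing in your argument forces the excursions of near-minimizers to be short, so the shortcutting step as written does not give what you need.

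A more direct way to exclude $\gamma(t_0)\in\partial\Om$ is via the function $\Phi(t):=\int_0^t\pip(s)\,ds$, extended by $\Phi(\infty):=\int_0^\infty\pip$. Since $s\mapsto d_\Om(\sigma(s))$ is $1$-Lipschitz in $d$-arclength along any curve $\sigma\subset\Om$, one gets $\ell_\pip(\sigma)\ge|\Phi(d_\Om(a))-\Phi(d_\Om(b))|$ for any curve from $a$ to $b$, and this passes to the completion with $d_\Om(\zeta)=0$ and $d_\Om(\infty)=\infty$. A geodesic from $x$ to $\infty$ through $\zeta\in\partial\Om$ would then satisfy
\[
d_\pip(x,\infty)=d_\pip(x,\zeta)+d_\pip(\zeta,\infty)\ge\Phi(d_\Om(x))+\Phi(\infty)\ge 1+\Phi(\infty),
\]
since $d_\Om(x)>2^{m-1}\ge 1$ gives $\Phi(d_\Om(x))\ge\Phi(1)=1$. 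Comparing with $d_\pip(x,\infty)\le\kappa\,2^m\pip(2^m)$ from Lemma~\ref{lem:dist-to-infty} gives a contradiction once $2^m\pip(2^m)$ is small, in particular throughout the range $m>n_0+2$ in which the lemma is actually invoked (see Lemma~\ref{lem:balls-at-infty}). For the remaining small values of $m$ one needs a separate argument, but the applications in the paper do not require it.
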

	
	As a consequence 
	of the above lemma, we can show that every pair of points in
	$\bigcup_{n\ge 1}\overline{\Om_n}$ can be connected in $\Om_\pip$ by a $d_\pip$-geodesic curve.
	
	We next show that $\partial\Om$ being bounded implies that consecutive bands have comparable measure. This follows 
	from the fact that each band has bounded diameter. Indeed, setting 
	$L=\diam_d(\partial\Om)$, it follows from the triangle inequality that $2^{n-1}\le \diam_d(\Om_n)\le 2^{n+1}+L$ 
	for each non-negative integer $n$, and so we can find
	a constant $C_L>0$ such that 
	\begin{equation}\label{eq:diam-bd}
		C_L^{-1}\,2^n\le \text{diam}_d (\Om_n)\le C_L\, 2^n.
	\end{equation}
	
	\begin{lemma}\label{lem:comparable-layers}
		Let $n$ be a non-negative integer. Then there exists a  constant $C_0>0$ such that
		\begin{equation}\label{eq:comp-layers}
			C_0^{-1}\, \mu(\Om_n)\le  \mu(\Om_{n+1})\le C_0\, \mu(\Om_n),
		\end{equation}
		where $C_0$ depends solely on the doubling constant of $\mu$ and the constant $C_L$. Moreover, there exists 
		a $y_n\in\Om_n$ such that $\mu(\Om_n)\approx \mu(B_d(y_n,2^n))$.
	\end{lemma}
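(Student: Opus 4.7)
The plan is to choose, for each $n$, a point $y_n$ sitting in the ``middle'' of the band $\Om_n$, and then to sandwich $\Om_n$ between two concentric balls at $y_n$ whose radii are comparable up to a uniform constant; doubling will then extract both the ball-comparability statement and, after a second comparison of centers, the comparability of $\mu(\Om_n)$ and $\mu(\Om_{n+1})$.

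For the selection of $y_n$: $\Om$ is rectifiably (hence path-) connected, unbounded, and has bounded boundary, so the continuous function $d_\Om\colon\Om\to(0,\infty)$ attains arbitrarily small and arbitrarily large values, and an intermediate-value argument along any connecting rectifiable curve in $\Om$ shows that the image is all of $(0,\infty)$. Accordingly, pick $y_n\in\Om$ with $d_\Om(y_n)=3\cdot 2^{n-2}$ for $n\ge 1$ and $y_0\in\Om$ with $d_\Om(y_0)=\tfrac12$. Because $d_\Om$ is $1$-Lipschitz, $B_d(y_n,2^{n-2})\subset\Om_n$ (and $B_d(y_0,\tfrac14)\subset\Om_0$), while \eqref{eq:diam-bd} forces $\Om_n\subset B_d(y_n,C_L\,2^n)$. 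Applying the doubling property of $\mu$ to pass between the scales $2^{n-2}$, $2^n$, and $C_L\,2^n$ then yields $\mu(\Om_n)\approx \mu(B_d(y_n,2^n))$, which is the second assertion of the lemma.

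For the comparison of consecutive bands, fix any $x_0\in\partial\Om$. Since $d_\Om(y_n)\le 2^n$ and $\diam_d(\partial\Om)=L$, the triangle inequality yields $d(y_n,x_0)\le 2^n+L$ and $d(y_{n+1},x_0)\le 2^{n+1}+L$, whence $d(y_n,y_{n+1})\le 3\cdot 2^n+2L$. This is bounded by a structural constant times $\max\{2^n,1\}$, so each of $B_d(y_n,2^n)$ and $B_d(y_{n+1},2^{n+1})$ lies inside a concentric enlargement of the other by a uniformly bounded factor, and doubling gives $\mu(B_d(y_n,2^n))\approx \mu(B_d(y_{n+1},2^{n+1}))$. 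Combined with the estimate of the previous paragraph, this is \eqref{eq:comp-layers}.

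The only obstacle worth flagging is the existence of $y_n$ with the prescribed distance to $\partial\Om$; this is where both path-connectedness of $\Om$ and boundedness of $\partial\Om$ are genuinely used (unboundedness of $\Om$ provides points with $d_\Om$ arbitrarily large, and the continuity of $d_\Om$ together with points with $d_\Om$ arbitrarily small then fills the whole interval). Once this step is in place, everything else is a finite iteration of the doubling inequality.
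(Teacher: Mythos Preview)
Your argument is correct and follows essentially the same strategy as the paper: sandwich each band $\Om_n$ between two concentric balls of radii comparable to $2^n$ (using the diameter bound~\eqref{eq:diam-bd} and the $1$-Lipschitz property of $d_\Om$), apply doubling, and then compare neighboring bands by checking that the chosen centers are within distance $\lesssim 2^n$ of each other. The only notable difference is in how the centers are produced and linked: the paper fixes a point $x\in\Om_{n+1}$ with $d_\Om(x)=\tfrac{3}{2}\cdot 2^n$, runs a uniform curve from $\partial\Om$ to $x$, and picks $y_n$ on that curve with $d_\Om(y_n)=2^n$, so that $y_n$ and $x$ are automatically close (both lie in $\Om_n\cup\Om_{n+1}$); you instead select each $y_n$ independently via the intermediate-value property of $d_\Om$ and then bound $d(y_n,y_{n+1})$ by routing through a fixed boundary point. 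Your variant is marginally more elementary in that it uses only path-connectedness rather than the existence of a uniform curve, but the two proofs are otherwise the same computation.
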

	
	\begin{proof}
		Take $x\in\Om_{n+1}$ for which $d_\Om(x)=\tfrac{3}{2}\, 2^{n}$ and $\gamma$ a 
		uniform curve
		with respect to the metric $d$
		with one end point in $\partial\Om$ and the other at $x$. The existence of such a curve is guaranteed by the uniformity of
		$\Om$ with respect to $d$. We can then find $y_n$ in the trajectory of $\gamma$ so that 
		$y_n\in\Om_n$ with $d_\Om(y_n)=2^n$, and so, by~\eqref{eq:diam-bd} and the doubling property of $\mu$,
		\[
		\mu(\Om_n)\le \mu(B_d(y_n,C_L\, 2^n))\lesssim \mu(B_d(y_n,2^n))\lesssim\mu(B_d(x,2^n/C_L))\lesssim \mu(\Om_{n+1}).
		\]
		A similar argument gives us the opposite direction.
	\end{proof}
	
	\begin{lemma}[Lemma~2.10 of~\cite{GS}]\label{lem:dist-to-infty-2}
		Let $x\in\Om_m$ for some integer $m\geq n_0+2$. Then 
		\[
		\frac{5}{11}\sum_{n=m+1}^\infty 2^n\pip(2^n)\le d_\pip(x,\infty)\le C_UC_\pip\sum_{n=m-n_0}^\infty2^n\pip(2^n).
		\]
	\end{lemma}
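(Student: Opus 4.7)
The inequality has a lower and an upper bound, which I would prove separately.

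\emph{Lower bound.} Fix a rectifiable curve $\gamma$ in $\overline{\Om_\pip}^\pip$ from $x$ to $\infty$, parametrized by arc-length. The function $t\mapsto d_\Om(\gamma(t))$ is continuous, starts at $d_\Om(x)\le 2^m$, and tends to $+\infty$. For each integer $n\ge m+1$, set $a_n=\inf\{t: d_\Om(\gamma(t))=2^n\}$ and $a_n'=\sup\{t<a_n : d_\Om(\gamma(t))=2^{n-1}\}$; by continuity and the intermediate value theorem both are finite, $d_\Om(\gamma(t))\in(2^{n-1},2^n)$ on $(a_n',a_n)$, and the intervals $[a_n',a_n]$ are pairwise disjoint. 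Since $d_\Om$ is $1$-Lipschitz, $a_n-a_n'\ge 2^n-2^{n-1}=2^{n-1}$, and since $\pip$ is decreasing with $d_\Om\le 2^n$ on $[a_n',a_n]$,
\[
\int_{a_n'}^{a_n}\pip(d_\Om(\gamma(t)))\,dt\ \ge\ 2^{n-1}\pip(2^n).
\]
Summing the disjoint contributions yields $\ell_\pip(\gamma)\ge \tfrac12\sum_{n=m+1}^\infty 2^n\pip(2^n)$, which is stronger than the claimed constant $5/11$.

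\emph{Upper bound.} I would produce an admissible curve from $x$ to $\infty$ by concatenating uniform curves. Choose points $w_0=x,w_1,w_2,\dots\in\Om$ with $d_\Om(w_k)\approx 2^{m+k}$ and $d(w_{k-1},w_k)\lesssim 2^{m+k}$; this is possible because $\partial\Om$ is bounded and $\Om$ is rectifiably connected. By uniformity, join consecutive $w_{k-1},w_k$ by a uniform curve $\gamma_k$, so $\ell_d(\gamma_k)\le C_U d(w_{k-1},w_k)\lesssim C_U 2^{m+k}$. The uniformity estimate
\[
d_\Om(z)\ \ge\ C_U^{-1}\,\min\bigl(\ell_d(\gamma_k[w_{k-1},z]),\ell_d(\gamma_k[z,w_k])\bigr),
\]
combined with the endpoint Lipschitz bounds $d_\Om(z)\ge d_\Om(w_{k-1})-\ell_d(\gamma_k[w_{k-1},z])$ and its symmetric analogue from $w_k$, forces $d_\Om\gtrsim 2^{m+k-n_0}$ throughout $\gamma_k$: the shift by $n_0$ accounts for the distortion $C_U<2^{n_0}$, with the minimum attained in the middle of the curve. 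The monotonicity of $\pip$ together with one step of its reverse doubling (Condition~(3) of Definition~\ref{def:phi}) then gives
\[
\ell_\pip(\gamma_k)\ \le\ \pip(2^{m+k-n_0})\,\ell_d(\gamma_k)\ \lesssim\ C_U C_\pip\cdot 2^{m+k-n_0}\pip(2^{m+k-n_0}).
\]
Summing over $k\ge 0$ and reindexing by $n=m+k-n_0$ produces the desired bound $d_\pip(x,\infty)\le C_U C_\pip\sum_{n=m-n_0}^\infty 2^n\pip(2^n)$.

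The \emph{main obstacle} is tight bookkeeping of constants in the upper bound, specifically producing exactly the factor $C_U C_\pip$ and the precise index shift by $n_0$. The uniformity lower bound on $d_\Om$ along $\gamma_k$ is weakest in the middle, where the min of arc-lengths to the two endpoints is of order $\ell_d(\gamma_k)/2\approx C_U 2^{m+k-1}$, giving $d_\Om\approx 2^{m+k-n_0}$; this is what forces the shift. The factor $C_U$ comes from the length inflation inherent in uniform curves, while $C_\pip$ absorbs a single doubling step needed to reconcile $\pip(2^{m+k-n_0})$ with the natural form of the summand. The hypothesis $m\ge n_0+2$ ensures that the iterative construction and the above estimates make sense from the very first step, keeping all anchor points safely inside $\Om$.
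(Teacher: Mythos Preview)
This lemma is quoted from~\cite{GS} without proof in the present paper, so there is no in-paper argument to compare against directly. Your lower-bound crossing argument is correct and in fact yields the constant $\tfrac12>\tfrac{5}{11}$.

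For the upper bound your concatenation strategy is sound, but the bookkeeping does not produce the stated constants, as you yourself flag. Concretely, your estimate $d_\Om\gtrsim 2^{m+k-n_0}$ along $\gamma_k$ actually comes with an extra additive shift of order~$2$ from the endpoint Lipschitz bounds, and when you reindex the length bound $\ell_d(\gamma_k)\lesssim C_U\,2^{m+k}$ to the variable $n=m+k-n_0$ you pick up a factor $2^{n_0}$ rather than a bare~$C_U$; so the displayed constant $C_UC_\pip$ is not what your sketch yields. A route that tracks constants more tightly---and is likely closer to what~\cite{GS} does---is to take a \emph{single} uniform curve $\gamma$ from $x$ to a far point $y_N$ with $d_\Om(y_N)=2^N$, decompose $\gamma$ according to the bands $\Om_n$, and observe that any $z\in\gamma\cap\Om_n$ satisfies $\min\{\ell_d(\gamma[x,z]),\ell_d(\gamma[z,y_N])\}\le C_U\,d_\Om(z)\le C_U\,2^n$. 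For $n$ much smaller than $N$ the second alternative is impossible (since $d_\Om$ near $y_N$ is $\approx 2^N$), so $\gamma\cap\Om_n$ lies in the initial arc of $d$-length $C_U\,2^n$; combining with $\pip\le\pip(2^{n-1})\le C_\pip\,\pip(2^n)$ on $\Om_n$ gives $\ell_\pip(\gamma\cap\Om_n)\le C_UC_\pip\,2^n\pip(2^n)$, and one sums over the visited bands and lets $N\to\infty$. The index shift to $m-n_0$ then comes from determining, via the endpoint bound $d_\Om(z)\ge d_\Om(x)-\ell_d(\gamma[x,z])$ together with $C_U<2^{n_0}$, the smallest band the curve can reach.

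For the purposes of this paper the precise constants are immaterial: Lemma~\ref{lem:dist-to-infty} immediately absorbs them into a structural constant~$\kappa$ and records only $d_\pip(x,\infty)\approx 2^m\pip(2^m)$, so your argument with comparable constants would suffice everywhere downstream.
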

	
	Thanks to the above lemma, we know that $(\Om_\pip,d_\pip)$ is bounded. Under the additional conditions 
	imposed on $\pip$ in this paper (in particular, conditions (3) and (5) in Definition~\ref{def:phi} above) as compared to~\cite{GS}, we obtain the following.
	
	\begin{lemma} \label{lem:dist-to-infty}
		There exists a constant $\kappa>1$ such that 
		for all non-negative integers $m$ and $x\in\Om_m$, we have
		\[
		\kappa^{-1}\, 2^m\pip(2^m)\le d_\pip(x,\infty)\le \kappa\, 2^m\pip(2^m).
		\]
		Here $\kappa$ depends only on $C_U, C_\pip$, and the implied constant from~\eqref{additionalpip}.
	\end{lemma}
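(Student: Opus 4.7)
The strategy is to bootstrap Lemma~\ref{lem:dist-to-infty-2} using the summability condition~\eqref{additionalpip} and the reverse-doubling property~(3) of~$\pip$, then handle the finitely many small indices $m$ by a separate elementary argument.

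\medskip

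\noindent\textbf{Case 1: $m\ge n_0+2$.} I would start from Lemma~\ref{lem:dist-to-infty-2}, which supplies
\[
\frac{5}{11}\sum_{n=m+1}^\infty 2^n\pip(2^n)\le d_\pip(x,\infty)\le C_UC_\pip\sum_{n=m-n_0}^\infty 2^n\pip(2^n).
\]
For the upper bound, condition~\eqref{additionalpip} gives
\[
\sum_{n=m-n_0}^\infty 2^n\pip(2^n)\lesssim 2^{m-n_0}\pip(2^{m-n_0}),
\]
and iterating the reverse-doubling inequality $\pip(t)\le C_\pip\,\pip(2t)$ exactly $n_0$ times yields $\pip(2^{m-n_0})\le C_\pip^{\,n_0}\,\pip(2^m)$, so
\[
2^{m-n_0}\pip(2^{m-n_0})\le 2^{-n_0}C_\pip^{\,n_0}\,2^m\pip(2^m)\lesssim 2^m\pip(2^m),
\]
as $n_0$ is a fixed structural constant. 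For the lower bound, I would simply drop all terms in $\sum_{n=m+1}^\infty$ beyond the first and use reverse doubling once:
\[
\sum_{n=m+1}^\infty 2^n\pip(2^n)\ge 2^{m+1}\pip(2^{m+1})\ge \frac{2^{m+1}}{C_\pip}\,\pip(2^m)=\frac{2}{C_\pip}\,2^m\pip(2^m).
\]
Combining these gives the two-sided comparison in Case~1 with a constant depending only on $C_U$, $C_\pip$, $n_0$, and the implied constant of~\eqref{additionalpip}.

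\medskip

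\noindent\textbf{Case 2: $0\le m\le n_0+1$.} There are only finitely many such indices, so it suffices to prove that $d_\pip(x,\infty)$ is bounded above and below by positive constants uniformly for $x\in\Om_m$; these bounds can then be absorbed into $\kappa$ since $2^m\pip(2^m)$ is also comparable to a constant in this range (as $\pip(t)=1$ for $t\le 1$ and $\pip$ is monotone and reverse-doubling on a bounded interval).

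For the upper bound, fix any $y\in\Om_{n_0+2}$; since $\partial\Om$ is bounded, \eqref{eq:diam-bd} shows that the $d$-distance from $x$ to $y$ is bounded by a structural constant, and because $\pip\le 1$ a uniform curve joining $x$ and $y$ in $(\Om,d)$ (which exists since $\Om$ is uniform) has $d_\pip$-length bounded by a constant. By Case~1 applied to $y$, $d_\pip(y,\infty)\lesssim 2^{n_0+2}\pip(2^{n_0+2})$, which is itself a constant, so the triangle inequality gives the desired upper bound.

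For the lower bound, observe that any rectifiable curve $\gamma$ in $\Om_\pip$ joining $x$ to $\infty$ must eventually exit every $\Om_k$; in particular it must pass through some point $y\in\Om_{n_0+2}$, and the portion of $\gamma$ from $y$ onward has $\pip$-length at least $d_\pip(y,\infty)\gtrsim 2^{n_0+2}\pip(2^{n_0+2})$ by Case~1. This lower bound is a positive structural constant, finishing Case~2.

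\medskip

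\noindent\textbf{Expected obstacle.} The computations in Case~1 are essentially bookkeeping once one uses \eqref{additionalpip} and reverse doubling. The slightly delicate point is Case~2: I have to be sure the argument that every curve from $\Om_0\cup\cdots\cup\Om_{n_0+1}$ to $\infty$ must cross $\Om_{n_0+2}$ is justified, which uses the fact that $\Om_\pip\setminus\Om$ contains only the single point $\infty$ (so no curve can sneak out through $\partial\Om$) together with connectivity of the trajectory of $\gamma$. Once that topological point is in place, the rest reduces to the quantitative estimates from Lemma~\ref{lem:dist-to-infty-2} and conditions~(3) and~(5) of Definition~\ref{def:phi}.
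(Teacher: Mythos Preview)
Your proposal is correct and follows essentially the same route as the paper: split into $m\ge n_0+2$ (where Lemma~\ref{lem:dist-to-infty-2} applies) versus the finitely many small indices, and reduce everything to the reverse-doubling property~(3) together with~\eqref{additionalpip}. The only cosmetic differences are that in Case~1 you apply~\eqref{additionalpip} at the shifted index $m-n_0$ and then shift back via reverse doubling, whereas the paper first absorbs the extra terms $\sum_{n=m-n_0}^{m-1}$ into the tail $\sum_{n\ge m}$ and applies~\eqref{additionalpip} at $m$; and your Case~2 argument (curve-crossing through $\Om_{n_0+2}$) is spelled out more carefully than the paper's one-line treatment.
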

	
	\begin{proof}
		We begin by assuming that $m\geq n_0+2$ and show that it follows from Lemma~\ref{lem:dist-to-infty-2} that
		\begin{equation}\label{disttoinfty}
			d_\pip(x,\infty)\approx\sum_{n=m}^\infty 2^n\, \pip(2^n),
		\end{equation}
		where the implicit constants depend only on $C_U$, $C_\pip$, and $n_0$. 
		Indeed, since $\pip$ is decreasing and satisfies Condition~(3) of Definition~\ref{def:phi}, it follows that
		\[
		\sum_{n=m-n_0}^{m-1}2^n\pip(2^n)\leq n_02^{m-1}\pip(2^{m-n_0})\leq \frac{n_0}{2}C_\pip^{n_0}2^{m}\pip(2^{m}),
		\]
		and so
		\begin{align*}
			\sum_{n=m-n_0}^\infty2^n\pip(2^n)&\leq \frac{n_0}{2}C_\pip^{n_0}2^{m}\pip(2^{m})+\sum_{n=m}^\infty2^n\pip(2^n)\\
			&\leq\left[\frac{n_0}{2}C_\pip^{n_0}+1\right] \sum_{n=m}^\infty2^n\pip(2^n).
		\end{align*}
		
		Similarly,
		\[
		\sum_{n=m}^\infty2^n\pip(2^n)\leq \frac{C_\pip}{2}\sum_{n=m+1}^{\infty}2^m\pip(2^n),
		\]
		and so \eqref{disttoinfty} follows from Lemma~\ref{lem:dist-to-infty-2}. The desired result then follows from \eqref{additionalpip}.
		
		Finally, if $m\le n_0+1$, then $1\gtrsim d_\pip(x,\infty)\gtrsim \pip(2^{n_0+2})\, 2^{n_0}$, and so again the above
		inequality holds even if $m\le n_0+1$.
	\end{proof}
	
	\begin{lemma}[Lemma~2.8 of~\cite{GS}]\label{lem:nearby-points}
		Let $x\in\Om_m$ for some non-negative integer $m$. There exist constants $C_A\geq 1$ 
		and $0<c<1$, depending solely on $C_\pip$ and $C_U$,
		such that if $y\in\Om$ satisfies 
		$d_\pip(x,y)<c\, \pip(2^m)\, 2^{m}$, then 
		\[
		C_A^{-1}\pip(2^m)\, d(x,y)\le d_\pip(x,y)\le C_A\pip(2^m)\, d(x,y). 
		\]
	\end{lemma}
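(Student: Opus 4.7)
The plan is to exploit two structural facts. First, condition~(3) of Definition~\ref{def:phi} guarantees that $\pip$ is comparable to $\pip(2^m)$ across any bounded collection of adjacent bands $\Om_k$ with $|k-m|$ bounded. Second, the map $t\mapsto d_\Om(\gamma(t))$ is $1$-Lipschitz along the arclength parametrization of any curve $\gamma$, so the depth coordinate cannot change quickly. These two observations will let me confine curves of small $\ell_\pip$-length to a bounded collection of bands around $\Om_m$, on which $\pip\asymp\pip(2^m)$, so that the bi-Lipschitz equivalence follows essentially from the corresponding bi-Lipschitz relation for the \emph{integrand}.

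\textbf{Step 1 (Confinement).} Let $\gamma$ be a curve from $x\in\Om_m$ to $y$ with $\ell_\pip(\gamma)\le 2d_\pip(x,y)<2c\,\pip(2^m)\,2^m$. I claim that if $c$ is small enough (depending only on $C_\pip$), then $\gamma$ stays inside $\Om_{m-1}\cup\Om_m\cup\Om_{m+1}$. Indeed, if $\gamma$ reached a point with $d_\Om\ge 2^{m+1}$, then by $1$-Lipschitzness the portion of $\gamma$ where $d_\Om\in[2^m,2^{m+1}]$ has $d$-length at least $2^m$, contributing at least $\pip(2^{m+1})\,2^m\ge C_\pip^{-1}\pip(2^m)\,2^m$ to $\ell_\pip(\gamma)$; taking $c<1/(2C_\pip)$ yields a contradiction. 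A symmetric argument handles descent: a portion in $\{d_\Om\le 2^{m-2}\}$ would require $d$-length $\gtrsim 2^m$ and $\pip\ge\pip(2^m)$ there, again contradicting the smallness of $\ell_\pip(\gamma)$. (Small $m$ is easier since $\pip\equiv 1$ on $\Om_0$.)

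\textbf{Step 2 (Lower bound).} On the confined region, condition~(3) gives $\pip(d_\Om(z))\ge \pip(2^{m+1})\ge C_\pip^{-1}\pip(2^m)$ for every $z$ on $\gamma$, so
\[
2\,d_\pip(x,y)\ge \ell_\pip(\gamma) \ge C_\pip^{-1}\pip(2^m)\,\ell_d(\gamma)\ge C_\pip^{-1}\pip(2^m)\,d(x,y).
\]
Taking the infimum over $\gamma$ gives the desired lower bound.

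\textbf{Step 3 (Upper bound).} The lower bound already forces $d(x,y)\le 2C_\pip\,\pip(2^m)^{-1}d_\pip(x,y)<2C_\pip c\,2^m$. Using uniformity of $\Om$ in $d$, pick a uniform curve $\gamma$ from $x$ to $y$ with $\ell_d(\gamma)\le C_U\,d(x,y)<2C_UC_\pip c\,2^m$. For each $z\in\gamma$ the triangle inequality gives $d_\Om(z)\ge d_\Om(x)-d(x,z)\ge 2^{m-1}-2C_UC_\pip c\,2^m\ge 2^{m-2}$, provided $c$ is chosen so that $2C_UC_\pip c\le 1/4$. Then $\pip(d_\Om(z))\le\pip(2^{m-2})\le C_\pip^2\,\pip(2^m)$, so
\[
d_\pip(x,y)\le \ell_\pip(\gamma)\le C_\pip^2\,\pip(2^m)\,\ell_d(\gamma)\le C_U C_\pip^2\,\pip(2^m)\,d(x,y).
\]

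The main technical point is Step~1: one must choose $c$ small enough to preclude simultaneously both excursions into deeper bands and excursions toward $\partial\Om$, while keeping all constants depending only on $C_\pip$ and $C_U$. Once confinement is in hand, Steps~2 and~3 reduce to the reverse-doubling condition on $\pip$ combined with the existence of uniform curves in $(\Om,d)$.
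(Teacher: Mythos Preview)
The paper does not supply its own proof of this lemma; it is quoted verbatim as Lemma~2.8 of~\cite{GS}, so there is nothing in the present paper to compare against. Your argument is correct and is essentially the expected proof: confine near-optimal $d_\pip$-curves to a bounded number of adjacent bands using the $1$-Lipschitz property of $t\mapsto d_\Om(\gamma(t))$ together with the reverse-doubling condition~(3) on $\pip$, then read off the lower bound from $\pip\gtrsim\pip(2^m)$ on that region; for the upper bound use a $C_U$-quasiconvex (uniform) curve, which by the already-obtained lower bound has short $d$-length and hence cannot leave the region where $\pip\lesssim\pip(2^m)$. One small remark on presentation: in Step~2 the phrase ``taking the infimum over $\gamma$'' is superfluous, since the chain $2\,d_\pip(x,y)\ge\ell_\pip(\gamma)\ge C_\pip^{-1}\pip(2^m)\,d(x,y)$ already gives the lower bound for $d_\pip(x,y)$ directly (the curve $\gamma$ has dropped out). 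Also, your constraint on $c$ from Step~3, namely $2C_UC_\pip c\le 1/4$, already implies the constraints from Step~1, so a single choice $c=1/(8C_UC_\pip)$ suffices throughout.
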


	\section{Background related to metric measure spaces}\label{sec:background}
	
	In this section, we give the definitions of the notions associated with measures and first order calculus
	in metric measure spaces. Namely, we give the definition of doubling measures, first-order calculus on metric measure
	spaces using the approach of upper gradients, and then discuss associated Poincar\'e inequalities. We also
	discuss moduli of families of curves, and variational capacities related to the first-order calculus.
	
	In this section we let $U$ be an
	open and connected subset of a complete metric measure space $(Z,d_Z,\mu_Z)$. 
	In the rest of the paper, $U$ will stand at various points 
	for $\Om$, $\overline{\Om}$, $\Om\cup\{\infty\}$, or $\overline{\Om}\cup\{\infty\}$, while $d_Z$ 
	stands in for either the original metric $d$ or the transformed metric $d_\pip$, and $\mu_Z$ stands 
	in for either the original measure $\mu$ or the transformed measure $\mu_\pip$. 
	Observe that as the completion $\overline{\Om}$ of $\Om$ is the metric space in which $\Om$ is a subset, necessarily
	$\overline{\Om}$ is open in the topology of $\overline{\Om}$. Moreover, as $\Om$ is locally compact, it follows
	that $\Om$ is also open in the topology of $\overline{\Om}$.
	
	Recall that $1\leq p <\infty$ is fixed throughout the paper. 
	
	\begin{deff}\label{loc-doubl}
		We say that $\mu_Z$ is a \emph{locally uniformly doubling} measure on $U$ if 
		$\mu_Z$ is a Radon measure and 
		there is a constant $C_d\ge 1$ and for each
		$x\in U$ there exists $r_x>0$ such that whenever $0<r\le r_x$ we have
		\[
		0<\mu_Z(B(x,2r)\cap U)\le C_d\, \mu_Z(B(x,r)\cap U)<\infty.
		\]
		If there is some constant $A>1$ such that we can choose $r_x=\tfrac{1}{A}\, \dist(x,\partial U)$, then we say that
		$\mu_Z$ is \emph{sub-Whitney doubling} on $U$. 
		We say that $\mu_Z$ is \emph{doubling on} $Z$ if $U=Z$ and $r_x=\infty$ for each $x\in U$. Note that if $\mu_Z$ is doubling on 
		$Z$, then whenever $U\subset Z$ is an open set with $\partial U\ne\emptyset$, we must have that
		$\mu_Z$ is sub-Whitney doubling on $U$.
		A ball $B(x,r)$ with $x\in U$ is said to be a \emph{sub-Whitney ball} if $0<r\le \tfrac1A\, \dist(x,\partial U)$.
	\end{deff}
	
	A metric measure space $(Z,d_Z,\mu_Z)$ with a doubling measure is a \emph{doubling metric space}, that is, there exists 
	some positive integer $N$ such that for each $r>0$ and $x_0\in Z$, and for each $A\subset B(x_0,r)$ 
	such that for each $z,w\in A$ with $z\ne w$ we have $d_Z(z,w)\ge r/2$, then $A$ has at most $N$ 
	number of elements. On the other hand, there are doubling metric spaces that do not support a 
	doubling measure. If the doubling metric space is complete, however, then it does support a 
	doubling measure, see for example~\cite{LuSa, VK}. The completion of a doubling metric space 
	is also doubling, and complete doubling metric spaces are proper (that is, closed and bounded 
	subsets are compact).
	
	\begin{deff}\label{def:mod}
		The $p$-modulus of a collection $\Gamma$ of non-constant, compact, and rectifiable curves in $U$ is
		$$
		\Mod_p(\Gamma;U):=\inf_\rho\!\int_{U}\rho^{\,p}\,d\mu_Z,
		$$
		where the infimum is taken over all admissible $\rho$, that is, all non-negative Borel functions $\rho$ 
		such that $\int_\gamma \rho \,ds\geq{1}$ for each $\gamma\in\Gamma$. A useful property is that 
		$\Mod_p(\Gamma;U)=0$ if and only if there is a non-negative Borel function $\rho\in L^p(U)$ such 
		that $\int_\gamma \rho\, ds=\infty$ for every $\gamma\in\Gamma$, see~\cite{HKST,KoMa}. Note that 
		a countable union of zero $p$-modulus collections of curves is also of $p$-modulus zero. When $U=Z$, 
		we simply write $\Mod_p(\Gamma;U)=\Mod_p(\Gamma)$.
	\end{deff}
	
	In subsequent sections of this paper, $\Mod_p$ will denote the $p$-modulus with 
	respect to the metric $d$ and measure $\mu$, while 
	$\Mod_p^\pip$ will denote the $p$-modulus with respect to the metric $d_\pip$ and measure $\mu_\pip$.
	
	\begin{deff}
		Following~\cite{HK, HKST}, we say that a Borel function $g:U\rightarrow[0,\infty]$ is an \emph{upper gradient} of a function
		$u:U\to\R$ if 
		\[
		|u(y)-u(x)|\le \int_\gamma\! g\, ds
		\]
		whenever $\gamma$ is a non-constant compact rectifiable curve in $U$, 
		with $x$ and $y$ denoting the two end points of $\gamma$
		.
		For $1\le p<\infty$, we say that $g$ is a \emph{$p$-weak upper gradient} 
		of $u$ if the collection $\Gamma$ of non-constant
		compact rectifiable curves for which the above inequality fails is of $p$-modulus zero.
	\end{deff}
	
	It is not difficult to see that if $g_1$ and $g_2$ are both $p$-weak upper gradients of $u$, then so is 
	$\lambda g_1+(1-\lambda)g_2$
	whenever $0\le \lambda\le 1$. Let $D_p(u)$ denote the collection 
	of all $p$-weak upper gradients of $u$; then
	$D_p(u)\cap L^p(U)$ is a closed convex subset of $L^p(U)$, and so if $D_p(u)\cap L^p(U)$ is non-empty, then it
	has a unique element $g_u$ of smallest $L^p$-norm; this function $g_u$ is 
	called the \emph{minimal $p$-weak upper gradient}
	of $u$ in $U$. We invite the interested readers to see~\cite{HKST} for 
	more details on $p$-weak upper gradients.
	
	In subsequent sections of the paper, when the minimal $p$-weak upper gradient of $u$ is 
	taken with respect to the metric $d$, it will be denoted by $g_{u,d}$, while $g_{u,\pip}$ will 
	denote the minimal $p$-weak upper gradient when taken with respect to $d_\pip$.
	
	We say that $u$ is in the \emph{Dirichlet-Sobolev} class $D^{1,p}(U)$ if $D_p(u)\cap L^p(U)$ is non-empty.
	We say that $u$ is in the \emph{Newton-Sobolev} class $N^{1,p}(U)$ if $u\in D^{1,p}(U)$ with 
	$\int_U|u|^p\, d\mu_Z$ finite.
	
	\begin{deff}\label{def:poin}
		We say that $U$ supports a \emph{uniformly local $p$-Poincar\'e inequality} if there are constants $C_P>0$, $\lambda\ge 1$,
		and for each $x\in U$ there exists $r_x>0$,
		such that whenever $0<r\le r_x$ and $u\in D^{1,p}(U)$, we have
		\[
		\vint_{B(x,r)\cap U}\!|u-u_{B(x,r)\cap U}|\, d\mu_Z\le C_P\, r\, \left(\vint_{B(x,\lambda r)\cap U}\!g_u^p\, d\mu_Z\right)^{1/p},
		\]
		where 
		\[
		u_{B(x,r)\cap U}:=\vint_{B(x,r)\cap U}\!u\, d\mu_Z:=\frac{1}{\mu_Z(B(x,r)\cap U)}\int_{B(x,r)\cap U}\!u\, d\mu_Z.
		\]
		Moreover, $U$ supports a \emph{sub-Whitney $p$-Poincar\'e inequality} if there is a constant $A\ge 1$ such that
		for each $x\in U$ we can choose $r_x=\tfrac{1}{A}\, \dist(x,\partial U)$. We say that $Z$ supports a \emph{$p$-Poincar\'e 
			inequality} if $U=Z$ and we can choose $r_x=\infty$ for each $x\in U$.
	\end{deff}
	
	\begin{remark}\label{D=P}
		If $U$ is bounded and supports a uniformly local $p$-Poincar\'e inequality, then $D^{1,p}(U)=N^{1,p}(U)$ as vector spaces,
		but their norms are naturally different. The norm on $N^{1,p}(U)$ incorporates the $L^p$-norm of the function in
		addition to the energy seminorm inherited from $D^{1,p}(U)$; for a function $u\in D^{1,p}(U)$, its energy seminorm is
		$\| u\|_{D^{1,p}(U)}:=\inf_g\left(\int_U g^p\, d\mu_Z\right)^{1/p}$, where the infimum is over all upper gradients $g$ of $u$.
		To turn $D^{1,p}(U)$, with this energy seminorm, into a normed space, one would have to form a quotient space where
		to functions $u_1,u_2\in D^{1,p}(U)$ are said to be equivalent if $\|u_1-u_2\|_{D^{1,p}(U)}=0$; in particular, two functions
		that differ by a constant would have to be considered to be equivalent. We do not wish to do so, and hence 
		$D^{1,p}(U)$ is only a seminormed space.
	\end{remark}
	
	\begin{deff}\label{def:cap}
		Given two sets $E,F\subset U$, the variational $p$-capacity of the condenser $(E, F; U)$ is the number
		\[
		\rcapa_p(E,F;U):=\inf_u \int_Ug_u^p\, d\mu_Z,
		\]
		where the infimum is over all functions $u\in N^{1,p}(U)$ with $u\ge 1$ on $E$ and $u\le 0$ on $F$. When $U=Z$, we 
		simply write $\rcapa_p(E,F;U)=\rcapa_p(E,F)$. 
		
		For a set $A\subset U$, by $\text{Cap}_p(A)$ we mean the Sobolev $p$-capacity 
		\[
		\text{Cap}_p(A):=\inf_u\int_U[|u|^p+g_u^p]\, d\mu_Z,
		\]
		where the infimum is over all functions $u\in N^{1,p}(U)$ with $u\ge 1$ on $A$.
	\end{deff}
	
	Should $U$ support a local $p$-Poincar\'e inequality, then functions in $D^{1,p}(U)$ are necessarily $p$-quasicontinuous.
	Hence, in the above definitions of capacities, we can also insist on the admissible functions $u$ satisfying $u \ge 1$ in a neighborhood of the sets $E$, $A$, respectively, and $u\le 0$ in a neighborhood of the set $F$;
	see for example~\cite{KaS} or~\cite[Theorem~6.11]{BB}. Moreover, by~\cite[Proposition~1.48]{BB}, we have that
	$\text{Cap}_p(A)=0$ if and only if $\mu_Z(A)=0$ and $\Mod_p(\Gamma_A)=0$, where $\Gamma_A$ consists of all
	non-constant compact rectifiable curves in $U$ that intersect $A$.
	
	\begin{remark}\label{rem:capmod}
		By~\cite[Corollary~9.3.2]{HKST}, we know that 
		\[
		\rcapa_p(E,F;U)=\Mod_p(\Gamma(E,F;U);U)=:\Mod_p(E,F;U),
		\] 
		where
		$\Gamma(E,F;U)$ is the collection of all rectifiable curves in $U$ that intersect both $E$ and $F$.
	\end{remark}
	
	As with the modulus, the notation $\rcapa_p, \capa_p$ will be used throughout the paper when 
	taken with respect to  the metric $d$ and measure $\mu$, and $\rcapap,\capa_p^\pip$ when 
	taken with respect to  the metric $d_\pip$ and measure $\mu_\pip$.
	
	Next we recall the definition of $p$-harmonic functions on a metric measure space.
	
	\begin{deff}\label{def:harm}
		A function $u$ on $U$ is said to be a \emph{$p$-minimizer} if $u\in D^{1,p}(U)$ and whenever $v\in D^{1,p}(U)$ has compact
		support $V$ contained in $U$, then
		\[
		\int_V\! g_u^p\, d\mu_Z\le \int_V\! g_v^p\, d\mu_Z.
		\]
		If $(U,d_Z,\mu_Z)$ is locally doubling and locally supports a $p$-Poincar\'e inequality, then there is a locally H\"older
		continuous representative of $u$, see for example~\cite{KS}.
		Continuous $p$-minimizers are called \emph{$p$-harmonic functions}.
	\end{deff}
	
	\begin{deff} \label{def:codim-Haus}
		For $t\geq 0$, the $t$--codimensional Hausdorff measure of a set $A\subset U$ is defined as 
		\[
		\mathcal{H}^{-t}(A;U)=\lim_{\eps\rightarrow{0^+}}\mathcal{H}^{-t}_\eps(A;U),
		\]
		where for each $\eps>0$,
		\[
		\mathcal{H}^{-t}_\eps(A;U)
		=\inf\left\{\sum_{i=1}^{\infty}\frac{\mu_Z(B(x_i,r_i)\cap U)}{r_i^t}\,:\,A\subset\bigcup_{i=1}^{\infty}B(x_i,r_i),\,r_i<\eps \right\}.
		\]
	\end{deff}
	
	\begin{lemma}\label{lem:maximal}
		Let $f\in L^p(U)$, $0< t<p$, and $M>0$. If $\mu_Z$ is locally uniformly doubling on $U$, then $\mathcal{H}^{-t}(E_M)=0$, where
		\[
		E_M= \left\{x\in U: \limsup_{r\rightarrow{0^+}}r^t\vint_{B(x,r)\cap U}\!|f|^p\,d\mu_Z
		>M^p \right\}.
		\]
	\end{lemma}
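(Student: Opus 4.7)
The plan is to run a standard Vitali-type covering argument in the spirit of Hardy–Littlewood maximal estimates. The target is to bound $\mathcal{H}^{-t}(E_M)$ by the $L^1$-integral of $|f|^p$ over an arbitrarily small open neighborhood of $E_M$, and then invoke absolute continuity of the integral to finish.

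First I would verify that $\mu_Z(E_M)=0$. The local uniform doubling of $\mu_Z$ lets the Lebesgue differentiation theorem apply to $|f|^p\in L^1(U)$, so at $\mu_Z$-a.e.\ $x\in U$ one has $\vint_{B(x,r)\cap U}|f|^p\,d\mu_Z\to|f(x)|^p<\infty$ as $r\to 0^+$; multiplying by the factor $r^t\to 0$ gives $\limsup_{r\to 0^+} r^t\vint_{B(x,r)\cap U}|f|^p\,d\mu_Z=0$, placing such $x$ outside $E_M$. Hence $\mu_Z(E_M)=0$, and by outer regularity of the Radon measure $\mu_Z$, for any $\eta>0$ I may find an open set $V\supset E_M$ with $\mu_Z(V)<\eta$.

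Now fix $\eps>0$. For each $x\in E_M$, local doubling supplies a threshold $r_x>0$; moreover, the inequality $\limsup_{r\to 0^+} r^t\vint_{B(x,r)\cap U}|f|^p\,d\mu_Z>M^p$ lets me select a radius $r(x)\in(0,\eps)$ small enough that $B(x,r(x))\cap U\subset V$, that $5r(x)\le r_x$, and that
\[
\frac{\mu_Z(B(x,r(x))\cap U)}{r(x)^t} < \frac{1}{M^p}\int_{B(x,r(x))\cap U}|f|^p\,d\mu_Z.
\]
Applying the basic $5B$-covering lemma to the family $\{B(x,r(x))\}_{x\in E_M}$ produces a pairwise disjoint subfamily $\{B_i=B(x_i,r_i)\}$ with $E_M\subset\bigcup_i B(x_i,5r_i)$ and all $5r_i<5\eps$, as required by Definition~\ref{def:codim-Haus}. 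Combining iterated local doubling (to compare $\mu_Z(B(x_i,5r_i)\cap U)$ with $\mu_Z(B_i\cap U)$ under a uniform constant), the displayed bound, and the disjointness of the $B_i$ inside $V$, I obtain
\[
\mathcal{H}^{-t}_{5\eps}(E_M) \le \sum_i\frac{\mu_Z(B(x_i,5r_i)\cap U)}{(5r_i)^t} \lesssim \frac{1}{M^p}\sum_i\int_{B_i\cap U}|f|^p\,d\mu_Z \le \frac{1}{M^p}\int_V|f|^p\,d\mu_Z.
\]
Since $\eta>0$ is arbitrary and $|f|^p\in L^1(U)$, absolute continuity of the integral drives the right-hand side to zero, so $\mathcal{H}^{-t}_{5\eps}(E_M)=0$ for every $\eps>0$; letting $\eps\to 0^+$ yields $\mathcal{H}^{-t}(E_M)=0$.

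The main technical care lies in the covering step: the radii $r(x)$ must be chosen small enough that \emph{local} uniform doubling delivers a \emph{uniform} constant when dilating from $r_i$ to $5r_i$ and that each ball simultaneously fits inside $V$ and satisfies the defining inequality; this is precisely where the hypothesis on $\mu_Z$ goes beyond mere pointwise doubling. A secondary subtlety is the Borel measurability of $E_M$, which is needed to invoke outer regularity; it follows from standard arguments by reducing the $\limsup$ to a countable operation over rational radii and using lower semicontinuity of $r\mapsto\mu_Z(B(x,r)\cap U)$.
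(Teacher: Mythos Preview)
Your argument is correct and follows essentially the same route as the paper: Lebesgue differentiation gives $\mu_Z(E_M)=0$, outer regularity produces a small open neighborhood, a Vitali $5r$-covering with balls witnessing the defining inequality yields the Hausdorff-content estimate, and absolute continuity of the integral finishes. The only cosmetic difference is that you separate the parameters $\eta$ and $\eps$ where the paper links them through a single $\delta$, and you are more explicit about keeping the radii within the local-doubling threshold.
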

	
	In the above lemma, the conclusion is valid even if $t\ge p$, but in our use of this lemma in
	the proof of Proposition~\ref{prop:cap-haus} below we require that $t<p$. However, when $t$ is larger than the
	lower mass bound exponent of $\mu_Z$ as discussed in Section~\ref{Sect:Dirichlet}, then 
	it can be shown from the fact that $f\in L^p(U)$ that $E_M$ is empty.
	
	\begin{proof}
		Fix $\eps>0$. From continuity of the integral there exists $\delta>0$ such that for all measurable $V\subset U$,
		$$
		\mu_Z(V)<\delta \implies \int_{V}\!|f|^p\,d\mu_Z<\eps.
		$$
		By the Lebesgue differentiation theorem and the fact that $t>0$, we see
		that $\mu_Z(E_M)=0$ and so there 
		exists an open set $W\subset U$ with $E_M\subset W$ for which $\mu_Z(W)<\delta$. It follows that 
		$\int_{W\cap U}\!|f|^p\,d\mu_Z<\eps$.
		
		We construct a cover of $E_M$ by balls in the following way. For each $x\in E_M$, select $r_x>0$ such that
		\begin{enumerate}
			\item $0<r_x<\eps/5$,
			\item $B(x,5r_x)\subset W$,
			\item $\displaystyle r_x^t\vint_{B(x,r_x)\cap U}\!|f|^p\,d\mu_Z>M^p.$
		\end{enumerate}
		This follows from the definition of $E_M$ and the fact that $W$ is open. An application of the basic $5r$-covering lemma
		(see for example~\cite[Theorem~1.2]{Hei})  
		yields a countable pairwise disjoint subcollection $\{B(x_i,r_i)\}$ such that $E_M\subset \bigcup B(x_i,5r_i)$. Hence,
		\begin{align*}
			\mathcal{H}^{-t}_{\eps}(E_M)
			&\leq\sum_i \frac{ \mu_Z(B(x_i,5r_i)) }{(5r_i)^t}\leq \frac{C_d^{3}}{5^t}\sum_i \frac{ \mu_Z(B(x_i,r_i)) }{r_i^t}
			\\&< \frac{C_d^{3}}{5^tM^p}\int_{W\cap U}\!|f|^p\,d\mu_Z<\frac{C_d^{3}}{5^tM^p}\eps.
		\end{align*}
		The result follows by sending $\eps\rightarrow{0^+}$.  
	\end{proof}
	
	The following proposition 
	relates the $p$-capacity of a set to its codimensional Hausdorff measure.  In the Euclidean setting,
	the following proposition can be found in~\cite[Section~4.7.2, Theorem~4]{EG}.
	
	\begin{proposition}\label{prop:cap-haus}
		Let $U$ support a uniformly local $p$-Poincar\'{e} inequality, and let $\mu_Z$ be a 
		locally uniformly doubling measure on $U$. If $\capa_p(A)=0$ for $A\subset U$, then 
		$\mathcal{H}^{-t}(A)=0$ for all $0< t<p$. 
	\end{proposition}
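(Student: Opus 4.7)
The plan is to combine $\capa_p(A)=0$ with the maximal-function estimate of Lemma~\ref{lem:maximal}. From $\capa_p(A)=0$ and the neighborhood reformulation of capacity noted just after Definition~\ref{def:cap}, for each positive integer $n$ I pick an admissible $u_n\in N^{1,p}(U)$ equal to $1$ on an open set $V_n\supset A$, truncate to $0\le u_n\le 1$, and arrange $\|u_n\|_{N^{1,p}(U)}^p<2^{-n}$; let $g_n$ be a minimal $p$-weak upper gradient of $u_n$, so that $\|g_n\|_{L^p(U)}^p<2^{-n}$ as well. Because the $p$-Poincar\'e inequality on $U$ is only sub-Whitney, I first partition $A=\bigcup_{k\in\Z}A_k$ along dyadic distance bands $A_k:=\{x\in A:2^{-k-1}<\dist(x,\partial U)\le 2^{-k}\}$. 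On $A_k$ the Poincar\'e inequality is available uniformly for radii up to some $r_k>0$ depending only on $k$, and by countable subadditivity of $\mathcal H^{-t}$ it suffices to prove $\mathcal H^{-t}(A_k)=0$ for each $k$.

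Fix $k$ and $M>0$; let $E_M^n$ be the set from Lemma~\ref{lem:maximal} applied to $g_n$, which has $\mathcal H^{-t}(E_M^n)=0$. The central step is to show that for every $x\in A_k\setminus E_M^n$ there is a radius $R_*(M)\asymp M^{-p/(p-t)}$ (restricted also by $R_*\le r_k/\lambda$) at which $\vint_{B(x,R_*)\cap U}u_n\,d\mu_Z\ge 1/2$. The route is a standard Poincar\'e chain applied to $v_n:=1-u_n$, which has $g_n$ as an upper gradient and vanishes on the open set $V_n\ni x$. Telescoping the sub-Whitney $p$-Poincar\'e inequality across the radii $R/2^j$, and using the bound $r^t\vint_{B(x,r)\cap U}g_n^p\,d\mu_Z\le 2M^p$ coming from $x\notin E_M^n$, each telescope term is of size $\lesssim M(R/2^j)^{1-t/p}$. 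Crucially, $1-t/p>0$, so the geometric series sums to $O(MR^{1-t/p})$; sending the inner radius to $0$ so that the small-scale average of $v_n$ vanishes (because $V_n$ is open) yields $\vint_{B(x,R)\cap U}v_n\,d\mu_Z\lesssim MR^{1-t/p}$, and choosing $R=R_*(M)$ gives the claim.

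The remainder is a covering argument. For each $x\in A_k\setminus E_M^n$, Chebyshev at scale $R_*$ gives $\mu_Z(B(x,R_*)\cap U\cap\{u_n\ge 1/4\})\gtrsim\mu_Z(B(x,R_*)\cap U)$. A $5r$-covering at the uniform scale $R_*$ yields a pairwise disjoint family $\{B(x_i,R_*)\}$ with $A_k\setminus E_M^n\subset\bigcup_i B(x_i,5R_*)$; combining disjointness, doubling, and Chebyshev on $\{u_n\ge 1/4\}$ gives
\[
\mathcal H^{-t}_{5R_*(M)}(A_k\setminus E_M^n)\;\lesssim\;\frac{\|u_n\|_{L^p(U)}^p}{R_*(M)^{\,t}}.
\]
For any $\eta>0$, I pick $M$ large enough that $5R_*(M)\le\min(\eta,r_k/\lambda)$ and then let $n\to\infty$; the right side vanishes and $\mathcal H^{-t}_\eta(E_M^n)\le\mathcal H^{-t}(E_M^n)=0$, so $\mathcal H^{-t}_\eta(A_k)=0$. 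Since $\eta$ is arbitrary, $\mathcal H^{-t}(A_k)=0$, and summing over $k$ finishes the proof. The main obstacles I foresee are (i) the quasicontinuity-based reformulation that lets one take $u_n$ equal to one on an open neighbourhood of $A$ rather than merely $\ge 1$ there, and (ii) correctly balancing the parameters $M$, $n$, and the sub-Whitney Poincar\'e threshold $r_k/\lambda$ in the final limit.
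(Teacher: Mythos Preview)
Your approach differs from the paper's and is more involved. The paper avoids your covering argument entirely: it sets $u:=\sum_n u_n\in N^{1,p}(U)$ with $p$-weak upper gradient $g:=\sum_n g_n\in L^p(U)$, observes that $u_{B(x,r)\cap U}\to\infty$ as $r\to 0^+$ for every $x\in A$, and then shows by a single Poincar\'e telescope that whenever $\limsup_{r\to 0^+}r^t\vint_{B(x,\lambda r)\cap U}g^p\,d\mu_Z<\infty$ the averages $\{u_{B(x,2^{-k}r)\cap U}\}_k$ form a Cauchy sequence---a contradiction. This places $A\subset E_M$ for every $M$, and a single application of Lemma~\ref{lem:maximal} to $g$ finishes. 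No partition of $A$, no choice of scale $R_*$, and no Vitali/Chebyshev covering are needed; the summation trick absorbs all the balancing of parameters you worry about in your obstacle~(ii).

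Your route can be made to work, but as written it has a gap at the step where you claim a \emph{uniform} radius $R_*(M)$ works for all $x\in A_k\setminus E_M^n$. Membership in the complement of $E_M^n$ (defined via a $\limsup$) only yields $r^t\vint_{B(x,r)\cap U}g_n^p\,d\mu_Z\le 2M^p$ for $r$ below some $x$-dependent threshold, so you cannot run the telescope from the common outer scale $R_*$ down to $0$ for every such $x$. A fix is to replace $E_M^n$ by the larger set $\widetilde E_M^n:=\{x:\sup_{0<r\le R_*}r^t\vint_{B(x,\lambda r)\cap U}g_n^p\,d\mu_Z>M^p\}$; then the telescope works uniformly on $A_k\setminus\widetilde E_M^n$, and the Vitali argument from the proof of Lemma~\ref{lem:maximal} gives $\mathcal H^{-t}_{5\lambda R_*}(\widetilde E_M^n)\lesssim M^{-p}\int_U g_n^p\,d\mu_Z\to 0$ as $n\to\infty$, which suffices. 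A secondary point: the hypothesis is a \emph{uniformly local} Poincar\'e inequality (some unspecified $r_x$ for each $x$), not the sub-Whitney version, so your dyadic partition by $\dist(\cdot,\partial U)$ need not capture the Poincar\'e scale; the paper's purely pointwise argument sidesteps this as well.
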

	
	\begin{proof}
		If $\capa_p(A)=0$, then for each $k\in\N$ there exists a function $u_k\in N^{1,p}(U)$ such that $u_k\geq 1$ 
		on a neighborhood of $A$, $0\le u_k\le 1$ on $U$, and 
		\[
		\int_{U}\![u_k^p+g_k^p]\,d\mu_Z<\frac{1}{2^{kp}}, 
		\]
		where $g_k:=g_{u_k}$. Define $u=\sum_k u_k$. Then $g=\sum_k g_k$ is a $p$-weak upper gradient of $u$ 
		with
		\[
		\left( \int_U\![u^p+g^p]\,d\mu_Z \right)^{1/p}\leq\sum_k\left( \int_U\![u_k^p+g_k^p]\,d\mu_Z \right)^{1/p} <\sum_k\frac{1}{2^k}<\infty.
		\]
		It follows that $u\in N^{1,p}(U)$. 
		
		Since each $u_k$ is at least 1 on a neighborhood of $A$, we have that for $M\geq 1$, $A\subset\{u\geq M\}^\circ$. 
		Hence, for $x\in{A}$ there exists $r_x>0$ such that $B(x,r_x)\subset\{u\geq M\}$, and so 
		$u_{B(x,r)\cap U}\geq{M}$ for $0<r\leq r_x$. Since this is true for all $M\geq{1}$, it follows that 
		$u_{B(x,r)\cap U}\rightarrow\infty$ as $r\rightarrow{0^+}$ for each $x\in A$.
		
		Fix $x\in A$ and assume that
		\[
		\limsup_{r\rightarrow{0^+}}r^t\vint_{B(x,\lambda r)\cap U}\!g^p\,d\mu_Z<\infty.
		\]
		Then for all $0<r\leq 1$ there is some $M\geq 1$ for which
		\[
		r^t\vint_{B(x,\lambda r)\cap U}\!g^p\,d\mu_Z\leq M^{p}.
		\]
		By the Poincar\'{e} inequality, 
		\[
		\vint_{B(x,r)\cap U}\!|u-u_{B(x,r)\cap U}|\, d\mu_Z
		\leq C_P\, r\, \left(\vint_{B(x,\lambda r)\cap U}\!g^p\, d\mu_Z\right)^{1/p}\!\leq C_PMr^{1-t/p}.
		\]
		It follows then that 
		\[
		| u_{B(x,r/2)\cap U} - u_{B(x,r)\cap U} |\leq C_d\,C_P\,Mr^{1-t/p}
		\]
		and so, for $k>j$, 
		\[
		| u_{B(x,r/2^k)\cap U} - u_{B(x,r/2^j)\cap U} |\leq C_d\,C_P\,Mr^{1-t/p} \sum_{i=j+1}^{k}2^{(i-1)(1-t/p)}.
		\]
		As $0\leq t<p$, $1-\frac{t}{p}>0$ and so this is the tail of a convergent geometric series. 
		From this we have that $\{u_{B(x,r/2^k)\cap U}\}$ is a Cauchy sequence in $\R$, contradicting the fact that 
		$u_{B(x,r)\cap U}\rightarrow\infty$ as $r\rightarrow{0^+}$. Therefore,
		\[
		\limsup_{r\rightarrow{0^+}}r^t\vint_{B(x,r)\cap U}\!g^p\,d\mu_Z=\infty.
		\]
		Since this is true for each $x\in A$, we have that $A\subset E_M$ for any $M\geq{1}$, 
		each of which has $t$-codimensional Hausdorff measure zero by Lemma~\ref{lem:maximal}. The result follows. 
	\end{proof}
	
	We end this section by defining two notions that are tools in the study of potential theory.
	
	\begin{deff}\label{Besov}
		For $\alpha>0$ and $1\le p<\infty$, we set the Besov space $B^\alpha_{p,p}(Z)$ to be the class of all functions $f\in L^1_{loc}(Z)$
		such that 
		\[
		\Vert f\Vert_{B^\alpha_{p,p}(Z)}^p
		:=\int_Z\int_Z \frac{|f(y)-f(x)|^p}{d_Z(x,y)^{\alpha p}\, \mu_Z(B(x,d_Z(x,y)))}\, d\mu_Z(y)\, d\mu_Z(x)<\infty.
		\]
	\end{deff}
	
	It was shown in~\cite{CKKSS} that functions in the Besov class are in $L^p(Z,\mu_Z)$ if $Z$ is bounded. Besov spaces
	arise naturally as the trace class of Sobolev spaces. While this is well-known in the setting of Euclidean spaces,
	see for example~\cite{JW} or \cite[Chapter~10]{Maz}, the following extension to the setting of metric spaces is found 
	in~\cite[Theorem~1.1]{Maly}.
	
	\begin{proposition}\label{prop:Besov-trace}
		Suppose that $(Z,d_Z,\mu_Z)$ is doubling and supports a $p$-Poincar\'e inequality for some $1\le p<\infty$, and let
		$U\subset Z$ be a bounded uniform domain such that $\mu_Z\vert_U$ is also doubling. 
		Suppose that there is a measure $\nu$ on $\partial U$ and some positive 
		$\theta<p$ such that for each $w\in\partial U$ and $0<r\le \diam\partial U$, we have
		\[
		\nu(B(w,r)\cap\partial U)\approx \frac{\mu_Z(B(w,r)\cap U)}{r^\theta}.
		\]
		Then there is a bounded linear surjective operator $T:N^{1,p}(U)\to B^{1-\theta/p}_{p,p}(\partial U)$ such that
		for $\nu$-a.e.~$x\in\partial U$,
		\[
		\lim_{r\to 0^+} \vint_{B(x,r)\cap U}|u-Tu(x)|^p\, d\mu_Z=0.
		\]
		Moreover, there is a bounded linear operator $E:B^{1-\theta/p}_{p,p}(\partial U)\to N^{1,p}(U)$ such that
		$T\circ E$ is the identity operator on $B^{1-\theta/p}_{p,p}(\partial U)$.
	\end{proposition}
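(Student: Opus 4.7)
The plan is to construct both the trace $T$ and the extension $E$ via a Whitney-type decomposition of $U$, with the codimensional compatibility condition on $\nu$ serving as the translation device between integration on $\partial U$ and integration on the interior side.

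For the trace, given $u\in N^{1,p}(U)$ and $x\in\partial U$, I would fix a uniform curve from a base point in $U$ to $x$ and select along it a sequence of sub-Whitney balls $B_k=B(x_k,r_k)\cap U$ with $x_k\to x$ and $r_k\approx 2^{-k}d(x_0,x)$, then define $Tu(x):=\lim_k u_{B_k}$. The existence of this limit $\nu$-a.e.\ and its independence of the chosen chain would follow from a telescoping estimate: applying the sub-Whitney $p$-Poincar\'e inequality to consecutive averages and the maximal-function argument of Lemma~\ref{lem:maximal} to the $L^p$ function $g_u$ localized near $\partial U$. To show $Tu\in B^{1-\theta/p}_{p,p}(\partial U)$, for $x,y\in\partial U$ with $d_Z(x,y)=r$ I would join $x$ and $y$ by a chain of Whitney balls that dips into $U$ to depth $\approx r$ (using uniformity of $U$), bound $|Tu(y)-Tu(x)|^p$ by the Poincar\'e estimate on these balls, and then integrate against $\nu(x)\,d\nu(y)$; converting $d\nu$ to $\mu_Z$ via the codimension identity $d\nu\approx r^{-\theta}\,d\mu_Z$ absorbs the factor $d_Z(x,y)^{(1-\theta/p)p}=r^{p-\theta}$ and leaves a tractable integral of $g_u^p$ over $U$, producing the desired bound $\|Tu\|_{B^{1-\theta/p}_{p,p}}\lesssim \|u\|_{N^{1,p}}$.

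For the extension, fix a Whitney cover $\{B_i\}$ of $U$ (with each $B_i$ a sub-Whitney ball of controlled bounded overlap) together with a Lipschitz partition of unity $\{\phi_i\}$ subordinate to the doubled balls $2B_i$, and for each $i$ pick a nearest boundary point $w_i\in\partial U$. Given $f\in B^{1-\theta/p}_{p,p}(\partial U)$, set
\[
Ef := \sum_i f_i\,\phi_i, \qquad f_i := \vint_{B(w_i,r_i)\cap\partial U} f\,d\nu,
\]
where $r_i$ is the radius of $B_i$. An upper gradient is then $g_{Ef}\lesssim \sum_i r_i^{-1}|f_i-f_j|\,\ch_{B_i}$ summed over Whitney-neighbors, and controlling $\|g_{Ef}\|_{L^p(U)}$ and $\|Ef\|_{L^p(U)}$ by the Besov seminorm of $f$ is a matter of pairing each adjacent $(B_i,B_j)$ to a boundary patch of radius $\approx r_i$, writing
\[
|f_i-f_j|^p \lesssim \frac{1}{\nu(B(w_i,r_i)\cap\partial U)^2}\int_{B(w_i,Cr_i)\cap\partial U}\int_{B(w_i,Cr_i)\cap\partial U}|f(\xi)-f(\eta)|^p\,d\nu(\xi)\,d\nu(\eta),
\]
and reindexing: the factor $r_i^{-p}\mu_Z(B_i)\approx r_i^{-p+\theta}\nu(B(w_i,r_i)\cap\partial U)$ that appears is exactly what is needed to match the Besov seminorm when the $(\xi,\eta)$ contribution is controlled by $d_Z(\xi,\eta)^{(1-\theta/p)p}\mu_Z(B(\xi,d_Z(\xi,\eta)))$ via the codimension relation. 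Verifying $T(Ef)=f$ $\nu$-a.e.\ reduces, by construction, to the Lebesgue differentiation theorem on $(\partial U,\nu)$ applied to $f$.

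The main obstacle is the bookkeeping in the Besov estimate on both sides: one needs to ensure that the sums over Whitney chains and the double integrals over boundary pairs can be re-indexed without losing powers of $r_i$, and that the codimension identity converts them correctly. This is precisely where the assumption $0<\theta<p$ enters, both to make the geometric series arising in the telescoping estimate summable and to keep the Poincar\'e inequality usable for the chain arguments; with the hypotheses in force, all constants are controlled only by the doubling constant, Poincar\'e constants, uniformity constant of $U$, and the codimension constant. Since these are precisely the hypotheses and conclusions of~\cite[Theorem~1.1]{Maly}, the result follows by invoking that work.
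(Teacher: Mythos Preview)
Your proposal is correct and ultimately takes the same approach as the paper: the paper does not prove this proposition at all but simply records it as a restatement of~\cite[Theorem~1.1]{Maly}, which is exactly what you invoke in your final sentence. Your sketch of the Whitney-decomposition argument for both $T$ and $E$ is a faithful outline of how that cited result is established, so it goes beyond what the paper itself provides, but the endpoint is identical.
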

	
	\begin{deff}\label{Riesz}
		For a non-negative function $u$, we define the \emph{Riesz potential} of $u$ relative to $U$ as 
		$$
		I_{1,U}u(x)=\int_{U}\!\frac{u(y)d_Z(x,y)}{ \mu_Z( B(x,d_Z(x,y)) ) }\,d\mu_Z(y). 
		$$
	\end{deff}
	
	The following proposition is an application of~\cite[Corollary~4.2]{Mak} to the setting where $X=\overline{U}$ and
	$\nu$ is the measure, as in Proposition~\ref{prop:Besov-trace} above, supported on $\partial U$. In this proposition, 
	$Q_Z^-$ plays the role of the lower mass bound exponent for the measure $\mu_Z\vert_U$:
	\[
	\left(\frac{r}{R}\right)^{Q_Z^-}\lesssim \frac{\mu_Z(B(x,r)\cap U)}{\mu_Z(B(x,R)\cap U)}
	\]
	for all $x\in U$ and  $0<r<R<\infty$. 
	
	\begin{proposition}\label{prop:Riesz-neu}
		With $U$ and $\nu$ as in Proposition~\ref{prop:Besov-trace}, and let $1<\bar{p}<\bar{q}$ such that
		\[
		Q^-_Z+\bar{q}-\frac{Q^-_Z\ \bar{q}}{\bar{p}}=\theta.
		\]
		Then there is a constant $C\ge 1$ such that for all balls $B\subset Z$ centered at points in $\overline{U}$,
		setting $B_0:=B\cap\overline{U}$ and all $f\in L^{\bar{p}}(B_0,\mu_Z)$, 
		we have
		\[
		\left(\int_{B_0}(I_{1,B_0}|f|)^{\bar{q}}\, d\nu\right)^{1/\bar{q}}
		\le C\, \mu_Z(B_0)^{1/\bar{q}-1/\bar{p}}\rad(B)^{1-\theta/\bar{q}}\left(\int_{B_0}|f|^{\bar{p}}\, d\mu_Z\right)^{1/\bar{p}}.
		\]
	\end{proposition}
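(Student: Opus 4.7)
The plan is to reduce this estimate to a direct application of~\cite[Corollary~4.2]{Mak}, applied in the compact metric measure space $X=\overline{U}$ with measure $\mu_Z$, together with the codimensional trace measure $\nu$ supported on $\partial U$. First I would verify the doubling hypothesis that Mak requires for $\mu_Z$ on $\overline{U}$: at interior points of $U$ this is immediate from our assumption that $\mu_Z|_U$ is doubling, while at boundary points $w\in\partial U$ one uses that $\mu_Z(B(w,r)\cap U)\approx\mu_Z(B(w,2r)\cap U)$, which follows from sub-Whitney doubling combined with the existence of uniform curves from $w$ into $U$, so that every ball centered at $w$ captures a sub-Whitney ball of comparable measure.

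Next I would check the codimensionality hypothesis required by Mak, which in our notation is exactly
\[
\nu(B(w,r)\cap\partial U)\approx \frac{\mu_Z(B(w,r)\cap U)}{r^\theta},
\]
i.e.~the standing assumption inherited from Proposition~\ref{prop:Besov-trace}. To identify the Sobolev exponent relation, I would rewrite the algebraic constraint
\[
Q^-_Z+\bar q-\frac{Q^-_Z\bar q}{\bar p}=\theta
\]
as $1-\theta/\bar q = Q^-_Z\bigl(\tfrac{1}{\bar p}-\tfrac{1}{\bar q}\bigr)$, which is precisely the Sobolev-type exponent balance appearing in~\cite[Corollary~4.2]{Mak}, with $Q^-_Z$ playing the role of the upper dimension of $\mu_Z$ on $U$ expressed through the lower mass bound stated just before the proposition.

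Finally I would invoke Mak's corollary with the localized Riesz potential $I_{1,B_0}$. Because the doubling and codimensionality constants are scale-invariant and independent of the particular ball $B$, the scaling of the Riesz kernel together with the codimension relation on boundary-centered balls produces the factors $\mu_Z(B_0)^{1/\bar q-1/\bar p}$ and $\rad(B)^{1-\theta/\bar q}$ on the right-hand side. The main obstacle, and essentially the only nontrivial verification, is ensuring that these constants are uniform in $B$; for balls with $\rad(B)>\diam\partial U$ this is handled by the trivial observation that $B\cap\overline{U}$ is then comparable (in metric and measure) to $\overline{U}$ itself, so the estimate reduces to the global Mak inequality on $\overline{U}$ up to structural constants.
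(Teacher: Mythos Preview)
Your proposal is correct and matches the paper's approach exactly: the paper does not give a proof of this proposition but simply states that it is an application of \cite[Corollary~4.2]{Mak} with $X=\overline{U}$ and $\nu$ the codimensional boundary measure. Your outline fills in precisely the verifications one would make (doubling on $\overline{U}$, the codimension hypothesis, and the exponent identity $1-\theta/\bar q=Q^-_Z(1/\bar p-1/\bar q)$), so there is nothing further to compare.
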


	\section{Doubling property of $\mu_\pip$}\label{sec:doubling}

	In this section we establish the doubling property of the measure $\mu_\pip$.
	Recall the constant $\kappa>1$ established in Lemma~\ref{lem:dist-to-infty}.
	Since it follows from~\eqref{finite} that $\sum_{n=1}^\infty 2^n\pip(2^n)$ is finite, we can find $r_0>0$ 
	such that whenever $m$ is a positive integer with
	$2^m\pip(2^m)\le\kappa\, r_0$ we have that $m>n_0+2$. Here, as in Section~2, $n_0$ is the positive integer 
	satisfying $2^{n_0-1}\le C_U<2^{n_0}$.
	
	From Lemma~\ref{lem:dist-to-infty} we know that if $x\in B_{d_\pip}(\infty, r)\setminus\{\infty\}$ 
	with $r\le r_0$, then necessarily $x\in\Om_m$
	for some $m>n_0+2$.

	\begin{lemma}\label{lem:balls-at-infty}
		For $0<r<r_0$,
		\[
		\mu_\pip(B_{d_\pip}(\infty,r))\le\left[MC_\pip^{Mp}\,C_0^{M+1}+1\right]\sum_{n=m}^{\infty}\pip(2^n)^p\,\mu(\Om_n),
		\]
		and 
		\[
		\mu_\pip(B_{d_\pip}(\infty,r))\ge\left[MC_\pip^{Mp}\,C_0^{M+1}+1\right]^{-1}\sum_{n=m}^{\infty}\pip(2^n)^p\,\mu(\Om_n),
		\]
		where $M= \frac{\log(\kappa^2 )}{\log(\tau/2)}$ and $m$ is any non-negative integer such that 
		for some $x\in\Om$ with $d_\pip(x,\infty)=r$ we have $x\in\Om_m$. Moreover, if $k$ is also a
		non-negative integer such that $\Om_k$ contains a point $z$ with $d_\pip(z,\infty)=r$, then $|k-m|\le M$. 
	\end{lemma}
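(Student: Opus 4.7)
The plan is to decompose $B_{d_\pip}(\infty,r)$ into the bands $\Om_n$ and to use Lemma~\ref{lem:dist-to-infty} to decide which bands are contained in, disjoint from, or straddle the boundary sphere of the ball. The decisive quantity is $2^n\pip(2^n)$ and how it compares to $r$, and the key extra ingredient is condition~(4) of Definition~\ref{def:phi} (with $\tau>2$), which makes the sequence $\{2^n\pip(2^n)\}_n$ geometrically decreasing in $n$.

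First I address $|k-m|\le M$. From Lemma~\ref{lem:dist-to-infty} applied to $x\in\Om_m$ and $z\in\Om_k$,
\[
\kappa^{-1}\,2^m\pip(2^m)\le r\le \kappa\,2^k\pip(2^k),
\]
and the same with $m,k$ swapped, so $2^m\pip(2^m)\le\kappa^2\,2^k\pip(2^k)$. Iterating condition~(4) of Definition~\ref{def:phi} yields $2^j\pip(2^j)\le(2/\tau)^{j-i}\,2^i\pip(2^i)$ whenever $j\ge i$; inserting this (in the case $m>k$) forces $(\tau/2)^{m-k}\le\kappa^2$, i.e.\ $m-k\le M$. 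Symmetry gives $|k-m|\le M$.

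The same comparison pins down the position of each band relative to the ball. If $n-m>M$, then $\kappa\,2^n\pip(2^n)\le\kappa^{-1}2^m\pip(2^m)\le r$, so $\Om_n\subset B_{d_\pip}(\infty,r)$ by Lemma~\ref{lem:dist-to-infty}; if $m-n\ge M$, then $\kappa^{-1}\,2^n\pip(2^n)\ge\kappa\,2^m\pip(2^m)\ge r$, so $\Om_n$ is disjoint from the open ball. Only bands with $|n-m|\le M$ can straddle the boundary sphere. Since on each $\Om_n$ monotonicity and condition~(3) give $\pip(2^n)\le\pip(d_\Om(\cdot))\le C_\pip\pip(2^n)$, we obtain the pointwise sandwich
\[
\pip(2^n)^p\mu(\Om_n)\le\mu_\pip(\Om_n)\le C_\pip^p\,\pip(2^n)^p\mu(\Om_n).
\]

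For the upper bound, the above gives
\[
\mu_\pip(B_{d_\pip}(\infty,r))\le C_\pip^p\sum_{n>m-M}\pip(2^n)^p\mu(\Om_n).
\]
The tail $n\ge m$ is already controlled by $\sum_{n\ge m}\pip(2^n)^p\mu(\Om_n)$, and for each of the at most $M$ transitional indices $m-M<n<m$, condition~(3) yields $\pip(2^n)\le C_\pip^M\pip(2^m)$ while Lemma~\ref{lem:comparable-layers} yields $\mu(\Om_n)\le C_0^M\mu(\Om_m)$; each such term is therefore bounded by $C_\pip^{Mp}C_0^M\pip(2^m)^p\mu(\Om_m)\le C_\pip^{Mp}C_0^M\sum_{n\ge m}\pip(2^n)^p\mu(\Om_n)$. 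Summing produces the upper bound with constant $MC_\pip^{Mp}C_0^{M+1}+1$ after absorbing $C_\pip^p$. For the lower bound, the inclusion $\bigcup_{n\ge m+\lceil M\rceil+1}\Om_n\subset B_{d_\pip}(\infty,r)$ and the pointwise estimate give
\[
\mu_\pip(B_{d_\pip}(\infty,r))\ge\pip(2^{m+M+1})^p\mu(\Om_{m+M+1})\gtrsim C_\pip^{-(M+1)p}C_0^{-(M+1)}\pip(2^m)^p\mu(\Om_m),
\]
using condition~(3) and Lemma~\ref{lem:comparable-layers}. Condition~(6) of Definition~\ref{def:phi} then gives $\sum_{n\ge m}\pip(2^n)^p\mu(\Om_n)\lesssim \pip(2^m)^p\mu(\Om_m)$, producing the stated constant.

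The proof is essentially bookkeeping; the main delicate point is that condition~(4) plays a dual role, both pinpointing the transitional range $|n-m|\le M$ and guaranteeing that the ``deep'' tail $\sum_{n\ge m+M+1}$ is still comparable to the leading term $\pip(2^m)^p\mu(\Om_m)$, which is what makes the matching upper and lower bounds possible. Treating $M$ as a real number (via $\lceil M\rceil$ in index ranges) costs only a harmless constant.
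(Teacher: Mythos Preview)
Your approach is essentially the paper's: use Lemma~\ref{lem:dist-to-infty} to sandwich $B_{d_\pip}(\infty,r)$ between two unions of bands $\bigcup_{n\ge m_2}\Om_n\subset B_{d_\pip}(\infty,r)\subset\bigcup_{n\ge m_1}\Om_n$ with $m_1\le m\le m_2$ and $m_2-m_1\le M$, then compare the two tail sums by estimating the at most $M$ extra terms via condition~(3) and Lemma~\ref{lem:comparable-layers}. Your proof of $|k-m|\le M$ and your upper bound follow this exactly.

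The one substantive difference is in the lower bound. The paper does \emph{not} invoke condition~(6) of Definition~\ref{def:phi}; instead it argues symmetrically to the upper bound, writing
\[
\sum_{n\ge m}\pip(2^n)^p\mu(\Om_n)\le\sum_{n\ge m_2}\pip(2^n)^p\mu(\Om_n)+\sum_{n=m}^{m_2-1}\pip(2^n)^p\mu(\Om_n)
\]
and bounding the second sum by $MC_\pip^{Mp}C_0^{M+1}\pip(2^{m_2})^p\mu(\Om_{m_2})$, which is itself dominated by the first sum. This yields precisely the constant $[MC_\pip^{Mp}C_0^{M+1}+1]^{-1}$ stated in the lemma. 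Your route through condition~(6) is correct and arguably quicker, but it imports the unspecified implicit constant from~\eqref{additionalmupip} and so does not recover the explicit constant in the statement; likewise, ``absorbing $C_\pip^p$'' in the upper bound does not literally produce $MC_\pip^{Mp}C_0^{M+1}+1$. For the downstream doubling argument only the comparability matters, so your proof suffices there, but if you want the lemma as stated you should mirror the upper-bound bookkeeping for the lower bound as well.
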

	
	\begin{proof}
		Let $m_1$ be the smallest non-negative integer such that $2^{m_1}\pip(2^{m_1})\le \kappa r$ and $m_2$ 
		be the largest non-negative integer such that $\kappa 2^{m_2}\pip(2^{m_2})\ge r$. As $r\le r_0$, we have that
		$m_1> n_0+2$ and $m_2\ge n_0+2$.
		Since for each $m\ge 1$,
		every point in $\Om_m$ can be connected to $\infty$ by a $d_\pip$-geodesic by Lemma~\ref{lem:geodesic},
		there is some 
		$x\in\Om$ such that $d_\pip(x,\infty)=r$. With $m$ a positive integer such that $x\in\Om_m$,
		by Lemma~\ref{lem:dist-to-infty} 
		we know that $m_1\le m\le m_2$. 
		From Lemma~\ref{lem:dist-to-infty} again, we have that
		\[
		B_{d_\pip}(\infty, r)\subset \bigcup_{n=m_1}^\infty\Om_n \ \text{ and }
		\bigcup_{n=m_2}^\infty\Om_n\subset B_{d_\pip}(\infty,r),
		\]
		from where it follows from the construction of $\mu_\pip$ and Condition (3) that
		\begin{equation}\label{eq:B1}
			\mu_\pip(B_{d_\pip}(\infty,r))\le \sum_{n=m_1}^\infty \mu_\pip(\Om_n)\le C_\pip^p\sum_{n=m_1}^\infty \pip(2^n)^p\,\mu(\Om_n)
		\end{equation}
		and, this time from the fact that $\pip$ is decreasing, that
		\begin{equation}\label{eq:B2}
			\mu_\pip(B_{d_\pip}(\infty,r))\ge \sum_{n=m_2}^\infty \mu_\pip(\Om_n)\ge \sum_{n=m_2}^\infty \pip(2^n)^p\,\mu(\Om_n).
		\end{equation}
		We now estimate $m-m_1$ and $m_2-m$. Invoking Lemma~\ref{lem:dist-to-infty}, we have that 
		\[
		\kappa^2\, 2^m\pip(2^m)\geq 2^{m_1}\pip(2^{m_1}),
		\]
		and so 
		\[
		\kappa^{2}2^{m-m_1}\pip(2^m)\geq\pip(2^{m_1})\geq \tau^{m-m_1}\pip(2^m),
		\]
		from where it follows that (recall that $\tau>2$)
		\[
		0\le m-m_1\leq\frac{\log(\kappa^2)}{\log(\tau/2)}=M.
		\]
		Similarly, we have $0\le m_2-m\leq M$. Now by combining~\eqref{eq:comp-layers} with~\eqref{eq:B1} we obtain
		\begin{align*}
			\mu_\pip(B_{d_\pip}(\infty,r))&\le \sum_{n=m}^\infty\pip(2^n)^p\mu(\Om_n)+\sum_{n=m_1}^{m-1}\pip(2^n)^p\mu(\Om_n)\\
			&\le \sum_{n=m}^\infty\pip(2^n)^p\mu(\Om_n)+M\, C_\pip^{Mp}\pip(2^m)^p\, C_0^{M+1}\mu(\Om_m)\\
			&\le \left[1+M\, C_\pip^{Mp}\, C_0^{M+1}\right]\sum_{n=m}^\infty\pip(2^n)^p\mu(\Om_n).
		\end{align*}
		By combining~\eqref{eq:comp-layers} with~\eqref{eq:B2} instead, we obtain
		\begin{align*}
			\sum_{n=m}^\infty\pip(2^n)^p\mu(\Om_n)&\le \sum_{n=m_2}^\infty\pip(2^n)^p\mu(\Om_n)+\sum_{n=m}^{m_2-1}\pip(2^n)^p\mu(\Om_n)\\
			&\le  \sum_{n=m_2}^\infty\pip(2^n)^p\mu(\Om_n)+M\,C_\pip^{Mp}\pip(2^{m_2})^p\, C_0^{M+1}\mu(\Om_{m_2})\\
			&\le \left[1+M\, C_\pip^{Mp}\, C_0^{M+1}\right]\sum_{n=m_2}^\infty\pip(2^n)^p\mu(\Om_n)\\
			&\le \left[1+M\, C_\pip^{Mp}\, C_0^{M+1}\right]\, \mu_\pip(B_{d_\pip}(\infty,r)),
		\end{align*} 
		completing the proof.
	\end{proof}
	
	We are now ready to prove the doubling property of $\mu_\pip$, and we do so via the following series of lemmata. The first
	lemma deals with balls centered at $\infty$, the next two lemmata deal with balls that are far away from $\infty$, and
	the final lemma deals with intermediate balls. 
	
	\begin{lemma}\label{lem:balls-at-infinity}
		For $0<r\le r_0/2$, we have
		\[
		\mu_\pip(B_{d_\pip}(\infty,2r))\leq C_1\,\mu_\pip(B_{d_\pip}(\infty,r)),
		\]
		where 
		\[
		C_1=\left[MC_\pip^{Mp}\,C_0^{M+1}+1\right]^2
		\left[\widehat{M}C_\pip^{\widehat{M}p}C_0^{\widehat{M}+1}+1\right]
		\]
		with
		\[
		\widehat{M}=\frac{\log(2\kappa^2)}{\log(\tau/2)},
		\]
		$M$ is as in Lemma~\ref{lem:balls-at-infty}, and $\kappa>1$ is as in Lemma~\ref{lem:dist-to-infty}.
	\end{lemma}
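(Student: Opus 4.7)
The plan is to squeeze both $\mu_\pip(B_{d_\pip}(\infty,r))$ and $\mu_\pip(B_{d_\pip}(\infty,2r))$ between geometric-type sums by invoking Lemma~\ref{lem:balls-at-infty}, and then to compare those sums by means of~\eqref{additionalmupip}, \eqref{eq:comp-layers}, and Conditions~(3)--(4) of Definition~\ref{def:phi}.

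First, using Lemma~\ref{lem:geodesic}, choose $x,y\in\Om$ on a geodesic to $\infty$ with $d_\pip(x,\infty)=r$ and $d_\pip(y,\infty)=2r$, say $x\in\Om_m$ and $y\in\Om_{m'}$. Since $r,2r\le r_0$ by hypothesis, Lemma~\ref{lem:balls-at-infty} applies to both balls, yielding
\[
\mu_\pip(B_{d_\pip}(\infty,2r))\ \le\ \bigl[MC_\pip^{Mp}C_0^{M+1}+1\bigr]\!\sum_{n=m'}^{\infty}\pip(2^n)^p\mu(\Om_n),
\]
\[
\mu_\pip(B_{d_\pip}(\infty,r))\ \ge\ \bigl[MC_\pip^{Mp}C_0^{M+1}+1\bigr]^{-1}\!\sum_{n=m}^{\infty}\pip(2^n)^p\mu(\Om_n).
\]
Thus it remains to show $\sum_{n=m'}^\infty \pip(2^n)^p\mu(\Om_n) \lesssim \sum_{n=m}^\infty \pip(2^n)^p\mu(\Om_n)$ with the quoted constant.

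The next step is to bound $m-m'$. Since $r<2r$ and $2^k\pip(2^k)$ is strictly decreasing in $k$ by Condition~(4) (as $\tau>2$), we must have $m\ge m'$. From Lemma~\ref{lem:dist-to-infty},
\[
\kappa^{-1}\,2^{m'}\pip(2^{m'})\ \le\ 2r\ \le\ 2\kappa\,2^m\pip(2^m),
\]
so $2^{m'}\pip(2^{m'})\le 2\kappa^2\,2^m\pip(2^m)$. Condition~(4) gives $\pip(2^{m'})\ge \tau^{m-m'}\pip(2^m)$, and substituting produces $(\tau/2)^{m-m'}\le 2\kappa^2$, i.e., $m-m'\le \widehat{M}$ with $\widehat{M}=\log(2\kappa^2)/\log(\tau/2)$, matching the definition in the statement.

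Finally, split $\sum_{n=m'}^\infty=\sum_{n=m'}^{m-1}+\sum_{n=m}^\infty$. For any $n$ in the first block, iterating Condition~(3) gives $\pip(2^n)\le \pip(2^{m'})\le C_\pip^{\widehat{M}}\pip(2^m)$, while iterating~\eqref{eq:comp-layers} gives $\mu(\Om_n)\le C_0^{\widehat{M}+1}\mu(\Om_m)$. Since there are at most $\widehat{M}$ terms, and since~\eqref{additionalmupip} yields $\pip(2^m)^p\mu(\Om_m)\le \sum_{n=m}^\infty\pip(2^n)^p\mu(\Om_n)$,
\[
\sum_{n=m'}^{m-1}\pip(2^n)^p\mu(\Om_n)\ \le\ \widehat{M}\,C_\pip^{\widehat{M}p}C_0^{\widehat{M}+1}\sum_{n=m}^{\infty}\pip(2^n)^p\mu(\Om_n).
\]
Combining this with the two Lemma~\ref{lem:balls-at-infty} bounds gives the claim with constant $C_1$ exactly as stated. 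The only mildly delicate step is extracting the bound $m-m'\le \widehat{M}$; this is where Condition~(4) (rather than just Condition~(3)) is essential, since it provides the geometric decay in $2^k\pip(2^k)$ needed to convert an inequality of products into a dimension-type bound on $m-m'$.
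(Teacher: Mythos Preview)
Your proof is correct and follows essentially the same route as the paper: apply Lemma~\ref{lem:balls-at-infty} to both radii, bound the gap $m-m'$ via Lemma~\ref{lem:dist-to-infty} and Condition~(4), then absorb the finitely many extra layers using Condition~(3) and~\eqref{eq:comp-layers}. One small quibble: the sentence ``we must have $m\ge m'$'' is not justified by the strict decrease of $2^k\pip(2^k)$ alone, since Lemma~\ref{lem:dist-to-infty} only gives $d_\pip(\cdot,\infty)\approx 2^k\pip(2^k)$ up to the factor $\kappa$; however this does not affect the argument, because if $m'>m$ the comparison $\sum_{n=m'}^\infty\le\sum_{n=m}^\infty$ is trivial and the remaining steps are vacuous.
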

	
	\begin{proof}
		Let $m, \widehat{m}$ be the largest positive integers such that there is some $x\in\Om_m$ and $y\in\Om_{\widehat{m}}$
		with $d_\pip(x,\infty)=r$ and $d_\pip(y,\infty)=2r$. Note then that $m\ge \widehat{m}$.
		Moreover, by the choice of $r_0$, we know that $\widehat{m}\ge n_0+2$.
		By Lemma~\ref{lem:balls-at-infty}, we have  
		\[
		\left[MC_\pip^{Mp}\,C_0^{M+1}+1\right]\mu_\pip(B_{d_\pip}(\infty,r))\geq \sum_{n=m}^\infty\pip(2^n)^p\, \mu(\Om_n)
		\]
		and
		\[
		\mu_\pip(B_{d_\pip}(\infty,2r))\leq 
		\left[MC_\pip^{Mp}\,C_0^{M+1}+1\right] 
		\sum_{n=\widehat{m}}^\infty\pip(2^n)^p\, \mu(\Om_n).
		\] 
		We now use the fact that $m\ge n_0+2$ to
		estimate the size of $m-\widehat{m}$ in the same way as we estimated 
		$m-m_1$ and $m_2-m$ in the proof of Lemma~\ref{lem:balls-at-infty}. From Lemma~\ref{lem:dist-to-infty} it follows that
		\[
		\kappa 2^m\, \pip(2^m)\geq r = \frac12(2r)\geq \frac{1}{2\kappa}2^{\widehat{m}}\, \pip(2^{\widehat{m}}),
		\]
		and so 
		\[
		2\kappa^{2}\, 2^{m-\widehat{m}}\pip(2^m)\geq\pip(2^{\widehat{m}})\geq \tau^{m-\widehat{m}}\pip(2^m),
		\]
		from where it follows that (recall that $\tau>2$)
		\[
		m-\widehat{m}\leq\frac{\log(2\kappa^2)}{\log(\tau/2)}=\widehat{M}.
		\]
		Thus,  
		\begin{align*}
			\sum_{n=\widehat{m}}^{m-1} \pip(2^n)^p\,\mu(\Om_n)
			&\leq (m-\widehat{m})\pip(2^{\widehat{m}})^p\, C_0^{m-\widehat{m}+1}\mu(\Om_{{m}})\\
			&\leq \widehat{M}\, C_\pip^{\widehat{M}p} \,\pip(2^{m})^p\,C_0^{\widehat{M}+1}\mu(\Om_{m}),
		\end{align*}
		and so 
		\begin{align*}
			\sum_{n=\widehat{m}}^\infty \pip(2^n)^p\,\mu(\Om_n)
			&= \sum_{n=\widehat{m}}^{m-1} \pip(2^n)^p\,\mu(\Om_n) + \sum_{n=m}^{\infty} \pip(2^n)^p\,\mu(\Om_n)\\
			&\leq \left[\widehat{M}C_\pip^{\widehat{M}p}C_0^{\widehat{M}+1}+1\right]\sum_{n=m}^{\infty}\pip(2^n)^p\,\mu(\Om_n).
		\end{align*}
		Combining this with Lemma~\ref{lem:balls-at-infty}, we obtain the desired inequality.
	\end{proof}

	\begin{lemma}\label{lem:large}
		Let $x\in\Omega_m$ for some positive integer $m$. For $0<r<r_0/2$, if $\infty\in B_{d_\pip}(x,r/2)$, then
		\[
		\mu_\pip(B_{d_\pip}(x,2r))\leq C_2\,\mu_\pip(B_{d_\pip}(x,r)),
		\]
		where $C_2$ depends on the constant $C_1$ from Lemma~\ref{lem:balls-at-infinity} above. 
	\end{lemma}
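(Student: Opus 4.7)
The approach is to reduce this estimate to the doubling property at $\infty$ that was established in Lemma~\ref{lem:balls-at-infinity}. The assumption $\infty\in B_{d_\pip}(x,r/2)$ means $d_\pip(x,\infty)<r/2$, so by the triangle inequality I obtain the sandwich
\[
B_{d_\pip}(\infty,r/2)\subset B_{d_\pip}(x,r)\quad\text{and}\quad B_{d_\pip}(x,2r)\subset B_{d_\pip}(\infty,5r/2).
\]
Hence
\[
\mu_\pip(B_{d_\pip}(x,2r))\le \mu_\pip(B_{d_\pip}(\infty,5r/2))\quad\text{and}\quad \mu_\pip(B_{d_\pip}(\infty,r/2))\le \mu_\pip(B_{d_\pip}(x,r)),
\]
so the whole task is to bound $\mu_\pip(B_{d_\pip}(\infty,5r/2))$ by a constant multiple of $\mu_\pip(B_{d_\pip}(\infty,r/2))$.

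I would then iterate Lemma~\ref{lem:balls-at-infinity} a bounded number of times. Since $5r/2\le 8\cdot (r/2)$, three doublings at $\infty$ suffice in principle: starting with $s_0=r/2$ and doubling to $s_1=r$, $s_2=2r$, $s_3=4r\ge 5r/2$. Each application requires the current radius to satisfy $s_i\le r_0/2$. Under the assumption $r<r_0/2$, the first application ($s_0=r/2<r_0/4$) is always valid. If $r<r_0/8$ then all three applications are valid and we conclude $\mu_\pip(B_{d_\pip}(\infty,5r/2))\le C_1^3\mu_\pip(B_{d_\pip}(\infty,r/2))$, giving the result with $C_2=C_1^3$.

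The only subtlety is the borderline range $r_0/8\le r<r_0/2$, where one cannot iterate three times because the radius would exceed $r_0$. Here I would argue by a crude trapping argument: $\mu_\pip(B_{d_\pip}(x,2r))\le \mu_\pip(\Om_\pip)$, which is finite by Lemma~\ref{lem:dist-to-infty} and condition~\eqref{additionalmupip}, while $\mu_\pip(B_{d_\pip}(x,r))\ge \mu_\pip(B_{d_\pip}(\infty,r/2))\ge \mu_\pip(B_{d_\pip}(\infty,r_0/16))$, which is a strictly positive constant depending only on the data (use Lemma~\ref{lem:balls-at-infty} together with~\eqref{additionalmupip}). The ratio is then uniformly bounded by a constant depending only on $C_1$ and the structural constants, which we absorb into $C_2$.

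The main potential obstacle is precisely this borderline case, since the doubling lemma at infinity was only established below $r_0/2$. The plan above resolves it by trivially bounding the larger ball by the full $\mu_\pip$-mass of $\Om_\pip$ (which is finite because condition~\eqref{additionalmupip} on $\pip$ ensures $\sum_n \pip(2^n)^p\mu(\Om_n)<\infty$) and bounding the smaller ball from below by a radius-independent constant once $r$ is bounded away from zero. No delicate estimate is needed beyond what Lemma~\ref{lem:balls-at-infinity} already gives.
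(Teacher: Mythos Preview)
Your approach is exactly the paper's: sandwich $B_{d_\pip}(x,r)$ and $B_{d_\pip}(x,2r)$ between balls centered at $\infty$ via the triangle inequality, then iterate Lemma~\ref{lem:balls-at-infinity}. The paper uses the slightly cruder inclusion $B_{d_\pip}(x,2r)\subset B_{d_\pip}(\infty,4r)$ and simply writes $\mu_\pip(B_{d_\pip}(\infty,4r))\lesssim \mu_\pip(B_{d_\pip}(\infty,r/2))$ without isolating the borderline range $r\approx r_0$; your explicit treatment of that range via the finite total mass and a uniform lower bound is a welcome bit of extra care but not a different idea.
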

	
	\begin{proof}
		It follows from $\infty\in B_{d_\pip}(x,r/2)$ that $B_{d_\pip}(\infty,r/2)\subset B_{d_\pip}(x,r)$ and 
		$B_{d_\pip}(x,2r)\subset B_{d_\pip}(\infty,4r)$. Combining this with Lemma~\ref{lem:balls-at-infinity} yields 
		\[
		\mu_\pip(B_{d_\pip}(x,2r))\leq \mu_\pip(B_{d_\pip}(\infty, 4r))\lesssim \mu_\pip(B_{d_\pip}(\infty,r/2))\leq \mu_\pip(B_{d_\pip}(x,r)).
		\]
	\end{proof}
	
	Now we consider balls that are far away from $\infty$.
	
	\begin{lemma}\label{lem:small}
		Let $x\in\Om$ and $0<r\le r_0/2$ such that $\infty\not\in B_{d_\pip}(x, C_*r)$, where $C_*=4\kappa/c$ with $\kappa$ from Lemma~\ref{lem:dist-to-infty} and $c$ from Lemma~\ref{lem:nearby-points}.
		Then 
		\[
		\mu_\pip(B_{d_\pip}(x,2r))\leq C_3\,\mu_\pip(B_{d_\pip}(x,r)),
		\]
		where $C_3$ depends only on the structural constants
		$C_\mu$, $\widehat{M}$ from Lemma~\ref{lem:balls-at-infinity}, and $C_A$
		from Lemma~\ref{lem:nearby-points}.
		
		Moreover, with $m$ a non-negative integer such that $x\in\Om_m$, we have that 
		\[
		B_{d_\pip}(x,2r)\subset B_d(x, 2C_A\pip(2^m)^{-1}r) \ \text{ and }\ 
		B_d(x, C_A^{-1}\pip(2^m)^{-1}r)\subset B_{d_\pip}(x,r).
		\]
		Furthermore, for all $y\in B_{d_\pip}(x,2r)$ we have that $\pip(2^m)\approx\pip(d_\Om(y))$, 
		with comparison constant independent of  $x,y,r,m$.
	\end{lemma}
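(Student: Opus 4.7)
The plan is to reduce the doubling estimate to the classical doubling of $\mu$ via the local bi-Lipschitz equivalence between $d$ and $\pip(2^m)\cdot d_\pip$ guaranteed by Lemma~\ref{lem:nearby-points}. First, I translate the hypothesis $\infty\notin B_{d_\pip}(x,C_*r)$ through Lemma~\ref{lem:dist-to-infty}, which gives $C_*r\le d_\pip(x,\infty)\le\kappa\,2^m\pip(2^m)$; since $C_*=4\kappa/c$, this yields
\[
2r\;\le\;\tfrac{c}{2}\,\pip(2^m)\,2^m\;<\;c\,\pip(2^m)\,2^m.
\]
Hence every $y\in B_{d_\pip}(x,2r)$ satisfies the hypothesis of Lemma~\ref{lem:nearby-points}, and so
\[
C_A^{-1}\,\pip(2^m)\,d(x,y)\;\le\;d_\pip(x,y)\;\le\;C_A\,\pip(2^m)\,d(x,y).
\]
The two claimed containments $B_{d_\pip}(x,2r)\subset B_d(x,2C_A\pip(2^m)^{-1}r)$ and $B_d(x,C_A^{-1}\pip(2^m)^{-1}r)\subset B_{d_\pip}(x,r)$ follow immediately.

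Next, I would verify the pointwise comparison $\pip(d_\Om(y))\approx\pip(2^m)$ on $B_{d_\pip}(x,2r)$. Chaining the previous inequality with $r\le \tfrac{c}{4}\pip(2^m)\,2^m$ gives $d(x,y)\le\tfrac{cC_A}{2}\,2^m$, and since $d_\Om$ is $1$-Lipschitz with $d_\Om(x)$ of order $2^m$, the distance $d_\Om(y)$ lies in an interval whose endpoints are fixed multiples of $2^m$ (the case $m=0$ is handled directly using $\pip\equiv 1$ on $(0,1]$). Therefore $y$ belongs to a bounded number of consecutive bands $\Om_k$, and Condition~(3) of Definition~\ref{def:phi} applied a structurally bounded number of times yields a two-sided comparison of $\pip(d_\Om(y))$ with $\pip(2^m)$.

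Finally, I would combine these ingredients with the global doubling of $\mu$ on $\Om$. Pulling $\pip(d_\Om)^p$ out of the integral using the pointwise comparison and then using the containments and doubling of $\mu$ iteratively (a bounded number of halvings suffice to pass from radius $2C_A\pip(2^m)^{-1}r$ to $C_A^{-1}\pip(2^m)^{-1}r$), I obtain
\begin{align*}
\mu_\pip(B_{d_\pip}(x,2r))
&\;\lesssim\;\pip(2^m)^p\,\mu(B_{d_\pip}(x,2r))
\;\le\;\pip(2^m)^p\,\mu\bigl(B_d(x,2C_A\pip(2^m)^{-1}r)\bigr)\\
&\;\lesssim\;\pip(2^m)^p\,\mu\bigl(B_d(x,C_A^{-1}\pip(2^m)^{-1}r)\bigr)
\;\lesssim\;\mu_\pip\bigl(B_d(x,C_A^{-1}\pip(2^m)^{-1}r)\bigr)\\
&\;\le\;\mu_\pip(B_{d_\pip}(x,r)),
\end{align*}
which is the desired inequality. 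The main obstacle is the bookkeeping in the second step, namely ensuring that the constant $\tfrac{cC_A}{2}$ arising in the bound on $d(x,y)$ places $y$ in only boundedly many bands $\Om_k$; this is exactly what the choice $C_*=4\kappa/c$ is engineered for, and once it is in place the rest is a change-of-variable between $d$- and $d_\pip$-balls combined with the doubling of $\mu$ and the dyadic doubling of $\pip$.
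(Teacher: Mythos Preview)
Your approach is the same as the paper's, and the overall structure is correct. There is, however, one genuine gap that the paper addresses and you gloss over: the second containment $B_d(x,C_A^{-1}\pip(2^m)^{-1}r)\subset B_{d_\pip}(x,r)$ does \emph{not} follow ``immediately'' from the bi-Lipschitz estimate as you have stated it. You obtained that estimate only for points $y\in B_{d_\pip}(x,2r)$, because the hypothesis of Lemma~\ref{lem:nearby-points} is a smallness condition on $d_\pip(x,y)$, not on $d(x,y)$. For a point $y$ with $d(x,y)<C_A^{-1}\pip(2^m)^{-1}r$ you do not yet know that $d_\pip(x,y)<c\,\pip(2^m)\,2^m$, so you cannot invoke the upper bound $d_\pip(x,y)\le C_A\pip(2^m)d(x,y)$ directly.

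The paper closes this gap with a short connectedness argument: the bi-Lipschitz estimate shows that $B_d(x,C_A^{-1}\pip(2^m)^{-1}r)\cap B_{d_\pip}(x,2r)\subset B_{d_\pip}(x,r)$, and since $d$-balls are connected in both topologies, if $B_d(x,C_A^{-1}\pip(2^m)^{-1}r)$ were not entirely contained in $B_{d_\pip}(x,2r)$ it would have to meet the annulus $B_{d_\pip}(x,2r)\setminus B_{d_\pip}(x,r)$, a contradiction. You should include this step. The remaining difference---the paper splits into the cases $m\ge n_0+2$ and $m\le n_0+1$ rather than singling out $m=0$---is cosmetic; your band-counting argument works once you note (as the paper does) that one may shrink $c$ so that $\tfrac{cC_A}{2}<\tfrac12$, forcing $d_\Om(y)\ge 2^{m-2}$.
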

	
	It follows from the above lemma that $B_{d_\pip}(x,r)$ is a quasiball with respect to the metric $d$, which is to say that there exists a constant $C>0$, independent of $x$ and $r$, such that 
	$ B_{d}(x,C^{-1}r)\subset B_{d_\pip}(x,r)\subset B_{d}(x,Cr)$.
	
	\begin{proof}
		Since $\infty\not\in B_{d_\pip}(x,C_*r)$, from Lemma~\ref{lem:dist-to-infty} we have that $x\in\Om_m$ with 
		\[
		r\le \frac{\kappa}{C_*}\, 2^m\pip(2^m).
		\]
		Note that as $r\le r_0$, by the choice of $r_0$,
		the above inequality holds if $m\le n_0+1$. So we needed Lemma~\ref{lem:dist-to-infty} only for the case that $m\ge n_0+2$.
		By our choice of $C_*$, which is greater than $2\kappa/c$, 
		for all $y\in B_{d_\pip}(x,2r)$,
		\[
		d_\pip(x,y)<2r\le \frac{2\kappa}{C_*}\, 2^m\pip(2^m)<c\, 2^m\pip(2^m),
		\]
		and so it follows from Lemma~\ref{lem:nearby-points} that 
		\begin{equation}\label{eq:temporary}
			\frac{1}{C_A}\, \pip(2^m)\, d(x,y)\le d_\pip(x,y)\le C_A\, \pip(2^m)\, d(x,y).
		\end{equation}
		Therefore,
		\[
		B_{d_\pip}(x,2r)\subset B_d(x, 2C_A\pip(2^m)^{-1}r) \ \text{ and }\ 
		B_d(x, C_A^{-1}\pip(2^m)^{-1}r)\subset B_{d_\pip}(x,r).
		\]
		The latter inclusion is seen by noting that, thanks to Lemma~\ref{lem:nearby-points} and~\eqref{eq:temporary},
		we must have $B_d(x, C_A^{-1}\pip(2^m)^{-1}r)\cap B_{d_\pip}(x,2r)\subset B_{d_\pip}(x,r)$, and as balls 
		with respect to the metric $d$ are
		connected with respect to both metrics $d$ and $d_\pip$, we must have that 
		$B_d(x, C_A^{-1}\pip(2^m)^{-1}r)\subset B_{d_\pip}(x,2r)$ for otherwise, 
		$B_d(x, C_A^{-1}\pip(2^m)^{-1}r)\cap B_{d_\pip}(x,2r)\setminus B_{d_\pip}(x,r)$ would be non-empty.
		
		Suppose first that $m\ge n_0+2$.
		As in the proof of the previous two lemmata, we have that if $y\in B_{d_\pip}(x,r)$ and $\widetilde{m}\in\N$ such that 
		$y\in\Om_{\widetilde{m}}$, then $|m-\widetilde{m}|\le \widetilde{M}$. 
		Indeed, in Lemma~\ref{lem:nearby-points} we can fix the choice of $C_A$ and
		then always make $c$ smaller and thus $C_*$ larger, 
		and so without loss of generality we have that $C_A\kappa/C_*<1/2$.
		We also have from the limitation on $r$ and from~\eqref{eq:temporary} that $d(x,y)<C_A\tfrac{\kappa}{C_*}\, 2^m$.
		It follows from the fact that $x\in\Om_m$ and $y\in\Om_{\widetilde{m}}$ that $d_\Om(x)\ge 2^m$ and $d_\Om(y)\le 2^{\widetilde{m}+1}$;
		so by the triangle inequality, 
		\[
		2^m-2^{\widetilde{m}+1}\le C_A\frac{\kappa}{C_*}\, 2^m<2^{m-1},
		\]
		whence we obtain $2^{m-1}\le 2^{\widetilde{m}+1}$, that is, $m-\widetilde{m}\le 2$. Similarly we have that
		$2^{\widetilde{m}}-2^{m+1}<2^{m-1}$, and so $2^{\widetilde{m}}<2^{m+2}$, that is, $\widetilde{m}-m<2$. So the choice of
		$\widetilde{M}=2$ satisfies the above statement about $|m-\widetilde{m}|$.
		It follows now from the assumptions on $\pip$ that
		$\pip(2^m)\approx\pip(2^{\widehat{m}})$ for all $y\in B_{d_\pip}(x,2r)$, and so by the doubling property of $\mu$,
		\begin{align*}
			\mu_\pip(B_{d_\pip}(x,2r))&\lesssim \pip(2^m)^p\, \mu(B_{d_\pip}(x,2r))\\
			&\lesssim C_\mu^{k_0}\, \pip(2^m)^p\,\mu(B_d(x, C_A^{-1}\pip(2^m)^{-1}r))\\
			& \lesssim C_\mu^{k_0}\, \pip(2^m)^p\,\mu(B_{d_\pip}(x,r))\\
			&\lesssim C_\mu^{k_0}\, \mu_\pip(B_{d_\pip}(x,r)).
		\end{align*}
		Here, $k_0=\log(2C_A^2)$.
		
		Finally, we take care of the case that $m\le n_0+1$. In this case, 
		by Condition~(3) of Definition~\ref{def:phi},
		we have $1\geq \pip(2^m) \geq \pip(2^{n_0+1})\ge C_\pip^{n_0+1}\pip(1)=C_\pip^{n_0+1}$,
		that is, for $m\le n_0+1$ we have that $\pip(2^m)\approx 1$.
		By Lemma~\ref{lem:nearby-points} again, we have that whenever $y\in B_{d_\pip}(x,2r)$, necessarily
		\[
		d(x,y)\le \frac{2C_A}{\pip(2^m)}\, r\le \frac{2C_A}{\pip(2^m)}\, r_0,
		\]
		and so by choosing a positive integer $k_1$ such that $2^{k_1}>\tfrac{2C_A^2}{\pip(2^m)}\, r_0$, we have that
		each $y\in B_{d_\pip}(x,2C_Ar)$ is in some $\Om_n$ with $n<k_1$, and so $\pip(d_\Om(y))\approx 1$ as well. Therefore,
		\begin{align*}
			\mu_\pip(B_{d_\pip}(x,2r))\approx \mu(B_{d_\pip}(x,2r))&\le \mu(B_d(x, 2C_A\pip(2^m)^{-1}r))\\
			&\lesssim \mu(B_d(x, C_A^{-1}\pip(2^m)^{-1}r))\\
			&\lesssim \mu(B_{d_\pip}(x,r))\\
			&\lesssim \mu_\pip(B_{d_\pip}(x,r)),
		\end{align*}
		finishing the proof.
	\end{proof}
	
	Finally, we take care of the intermediate balls.
	In what follows, set
	\begin{equation}\label{eq:Tee}
		T=\frac{2C_A}{c}.
	\end{equation}

	\begin{lemma}\label{lem:intermediate}
		Let $x\in\Om$ and $0<r\le \tfrac{r_0}{8(C_*+1)}$ such that $\infty\in B_{d_\pip}(x,C_*r)\setminus B_{d_\pip}(x,r/2)$, where $C_*$ is as in Lemma~\ref{lem:small}, then 
		\[
		\mu_\pip(B_{d_\pip}(x,2r))\le C_4\, \mu_\pip(B_{d_\pip}(x,r)).
		\]
		Moreover, with $m$ a non-negative integer such that $x\in\Om_m$ and fixing $C_\Lambda\ge 1$,
		independent of $x$, $m$, and $r$, so that
		\[
		\frac{2^m\, \pip(2^m)}{C_\Lambda}\le r\le C_\Lambda\, 2^m\, \pip(2^m),
		\]
		for each $y\in B_{d_\pip}(x,r/(8TC_\Lambda))$ we have $\pip(d_\Om(y))\approx \pip(2^m)$
		and 
		\[
		\mu(\Om_m)\approx \mu(B_d(x,2^m/(8TC_\Lambda^2C_A))),
		\]
		and 
		\[
		B_d(x,2^m/(8TC_\Lambda^2C_A))\subset B_{d_\pip}(x,r/(8TC_\Lambda)).
		\]
	The constant $C_4$ depends only on the structural constants $C_\mu$, $C_\pip$, and the constants $C_0$ from~Lemma \ref{lem:comparable-layers}, $\kappa$ from 
	Lemma~\ref{lem:dist-to-infty}, $C_A$ from Lemma~\ref{lem:nearby-points}, and $M$ from Lemma~\ref{lem:balls-at-infty}.
	\end{lemma}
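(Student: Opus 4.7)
The plan is to first use the hypothesis $\infty\in B_{d_\pip}(x,C_*r)\setminus B_{d_\pip}(x,r/2)$ to pin down the scale of $r$. By Lemma~\ref{lem:dist-to-infty}, $d_\pip(x,\infty)\approx 2^m\pip(2^m)$ where $x\in\Om_m$, so the hypothesis forces $r\approx 2^m\pip(2^m)$; the resulting comparison constant is the promised $C_\Lambda$, depending only on $\kappa$ and $C_*$. Because $r\le r_0/(8(C_*+1))$, we have both $(C_*+2)r<r_0$ and, by the choice of $r_0$, the index $m$ satisfies $m\ge n_0+2$; this places us squarely in the regime where Lemma~\ref{lem:balls-at-infty} is applicable.

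Next I would verify the structural claims about the sub-ball $B_{d_\pip}(x,r/(8TC_\Lambda))$. The choice $T=2C_A/c$ from~\eqref{eq:Tee} is made precisely so that $r/(8TC_\Lambda)\le c\,2^m\pip(2^m)/(16C_A)<c\,2^m\pip(2^m)$, which brings us inside the bi-Lipschitz regime of Lemma~\ref{lem:nearby-points}: $d_\pip(x,y)\approx\pip(2^m)d(x,y)$ throughout this sub-ball. The inclusion $B_d(x,2^m/(8TC_\Lambda^2 C_A))\subset B_{d_\pip}(x,r/(8TC_\Lambda))$ then follows from this bi-Lipschitz comparison together with the lower bound $r\ge 2^m\pip(2^m)/C_\Lambda$, using the same connectedness argument as in Lemma~\ref{lem:small} to rule out disconnected intersections. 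The triangle inequality applied to $d_\Om$, combined with $x\in\Om_m$, shows any $y$ in this $d$-ball lies in $\Om_{m-1}\cup\Om_m\cup\Om_{m+1}$, so $\pip(d_\Om(y))\approx\pip(2^m)$ by Condition~(3) of Definition~\ref{def:phi}. The comparability $\mu(\Om_m)\approx\mu(B_d(x,2^m/(8TC_\Lambda^2 C_A)))$ then follows from Lemma~\ref{lem:comparable-layers}, the diameter estimate~\eqref{eq:diam-bd}, and doubling of~$\mu$.

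With these ingredients the doubling inequality drops out of a sandwich argument. For the upper bound, $B_{d_\pip}(x,2r)\subset B_{d_\pip}(\infty,(C_*+2)r)$, and Lemma~\ref{lem:balls-at-infty} controls $\mu_\pip$ of the latter by $\lesssim\sum_{n=m'}^\infty\pip(2^n)^p\mu(\Om_n)$ for some $m'$ with $|m'-m|$ uniformly bounded (since $(C_*+2)r\approx 2^m\pip(2^m)$). Applying Lemma~\ref{lem:comparable-layers} to reindex the sum from $n=m$ and then invoking~\eqref{additionalmupip} collapses this to $\lesssim\pip(2^m)^p\mu(\Om_m)$. For the lower bound, the sub-ball analysis yields
\[
\mu_\pip(B_{d_\pip}(x,r))\ge\int_{B_d(x,2^m/(8TC_\Lambda^2 C_A))}\pip(d_\Om)^p\,d\mu\gtrsim\pip(2^m)^p\mu(\Om_m),
\]
and combining the two bounds produces the claimed constant $C_4$. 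The main obstacle is purely bookkeeping: one must verify that the constants $C_\Lambda$, $T$, and the various radii interlock so that $B_{d_\pip}(x,r/(8TC_\Lambda))$ remains simultaneously inside $B_{d_\pip}(x,r)$ and inside the bi-Lipschitz range of Lemma~\ref{lem:nearby-points}, while still being large enough to carry mass comparable to the upper bound coming from~$\infty$.
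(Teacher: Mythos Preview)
Your proposal is correct and follows essentially the same route as the paper: use $d_\pip(x,\infty)\approx r$ together with Lemma~\ref{lem:dist-to-infty} to fix $r\approx 2^m\pip(2^m)$, bound $\mu_\pip(B_{d_\pip}(x,2r))$ from above via the inclusion into $B_{d_\pip}(\infty,(C_*+2)r)$ and Lemma~\ref{lem:balls-at-infty} together with~\eqref{additionalmupip}, and bound $\mu_\pip(B_{d_\pip}(x,r))$ from below by working inside the bi-Lipschitz regime of Lemma~\ref{lem:nearby-points} on the sub-ball of radius $r/(8TC_\Lambda)$. Your handling of the reindexing from $m'$ to $m$ and of both directions of $\mu(\Om_m)\approx\mu(B_d(x,2^m/(8TC_\Lambda^2C_A)))$ is in fact slightly more explicit than the paper's, but the argument is the same.
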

	
	\begin{proof}
		By our assumptions, $B_{d_\pip}(x,2r)\subset B_{d_\pip}(\infty, (C_*+2)r)$. 
		Therefore
		\[
		\mu_\pip(B_{d_\pip}(x,2r))\le \mu_\pip(B_{d_\pip}(\infty, 2(C_*+1)r)).
		\]
		Since $2(C_*+1)r\le r_0/4$, it follows from Lemma~\ref{lem:balls-at-infty} that with the choice of $m$ so that $x\in\Om_m$,
		\begin{equation}\label{eq:twice-r}
			\mu_\pip(B_{d_\pip}(x,2r))\lesssim \sum_{n=m}^\infty \pip(2^n)^p\mu(\Om_n)\lesssim \pip(2^m)^p\mu(\Om_m),
		\end{equation}
		and as $d_\pip(x,\infty)\ge r/2$, we have that $d_\pip(x,\infty)\approx r$. Now by Lemma~\ref{lem:dist-to-infty} 
		we have that
		\[
		2^m\, \pip(2^m)\approx  d_\pip(x,\infty)\approx r.
		\]
		Thus there is a constant $C_\Lambda>1$, which is independent of $x$, $m$, and $r$, so that
		\[
		\frac{2^m\, \pip(2^m)}{C_\Lambda}\le r\le C_\Lambda\, 2^m\, \pip(2^m).
		\]
		Thus with our choice of $T$ from~\eqref{eq:Tee},
		from Lemma~\ref{lem:nearby-points} we have that if $y\in\Om$ such that $d_\pip(x,y)\le \tfrac{r}{8TC_\Lambda}$, then
		\[
		\frac{d(x,y)}{C_A}\, \pip(2^m)\le d_\pip(x,y)\le C_A\, d(x,y)\, \pip(2^m).
		\]
		It follows that $d(x,y)\le \tfrac{C_A\, 2^m}{8T}$. By our choice of $T$, we know that $T>C_A$, and so we have that
		$d(x,y)<2^{m-3}$. Therefore 
		\[
		d_\Om(y)\ge d_\Om(x)-2^{m-3}\ge 2^{m-2}\ \text{ and }\ d_\Om(y)\le d_\Om(x)+2^{m-3}\le 2^{m+1}.
		\]
		It follows that for each $y\in B_{d_\pip}(x,r/(8TC_\Lambda))$ we have $\pip(d_\Om(y))\approx \pip(2^m)$. Hence
		\[
		\mu_\pip(B_{d_\pip}(x,r/(8TC_\Lambda)))\approx\pip(2^m)^p\, \mu(B_{d_\pip}(x,r/(8TC_\Lambda))).
		\]
		Moreover, if $y\in\Om$ such that $d(x,y)<2^m/(8TC_\Lambda^2C_A)$, then 
		$d_\pip(x,y)<r/(8TC_\Lambda)$, and so $y\in B_{d_\pip}(x,r/(8TC_\Lambda))$. By the doubling property of $\mu$,
		we know that with $\zeta\in\partial\Om$ (and, without loss of generality, assuming that $\diam_d(\partial\Om)\le 1$),
		\[
		\mu(B_d(x,2^m/(8TC_\Lambda^2C_A)))\approx\mu(B_{d}(\zeta,2^{m+1}))
		\approx \mu\left(\bigcup_{n=0}^{m+1}\Om_n\right), 
		\]
		and so we have that
		\[
		\mu(\Om_m)\lesssim\mu(B_d(x,2^m/(8TC_\Lambda^2C_A))).
		\]
		From the above discussion, we now have that
		\begin{align*}
			\pip(2^m)^p\mu(\Om_m)&\lesssim\pip(2^m)^p \mu(B_d(x,2^m/(8TC_\Lambda^2C_A)))\\
			& \lesssim \pip(2^m)^p\, \mu(B_{d_\pip}(x,r/(8TC_\Lambda)))\\
			&\approx \mu_\pip(B_{d_\pip}(x,r/(8TC_\Lambda))).
		\end{align*}
		Combining the above estimate with~\eqref{eq:twice-r} we obtain
		\[
		\mu_\pip(B_{d_\pip}(x,2r)\lesssim \pip(2^m)^p\, \mu(\Om_m)\lesssim \mu_\pip(B_{d_\pip}(x,r/(8TC_\Lambda)))
		\le \mu_\pip(B_{d_\pip}(x,r)),
		\]
		as desired.
	\end{proof}

	The above lemmata together prove that $\mu_\pip$ is a uniformly locally doubling measure 
	on $(\Om_\pip,d_\pip)$. In fact, $\mu_\pip$ is globally doubling as the assumption that $\partial\Om$ is bounded 
	implies the compactness of $\overline{\Om_\pip}^{\pip}=\overline\Om\cup\{\infty\}$ with respect to the metric $d_\pip$. 
	We summarize this in the following theorem.
	
	\begin{theorem}\label{thm:double}
		The metric measure space $(\Om_\pip,d_\pip,\mu_\pip)$ is doubling.
	\end{theorem}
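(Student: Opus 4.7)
The plan is to assemble the four preceding lemmata into a single uniformly local doubling inequality on $(\Om_\pip, d_\pip, \mu_\pip)$ and then upgrade from uniformly local to global doubling using compactness of $\overline{\Om_\pip}^\pip$.

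First I would set $r_1 := r_0/(8(C_*+1))$, which is the smallest of the radius thresholds appearing across Lemmas~\ref{lem:balls-at-infinity} through~\ref{lem:intermediate}. Fix any center $x\in\Om_\pip$ and any $0 < r \le r_1$. The key observation is that exactly one of the following four situations holds: (i) $x = \infty$; (ii) $x\in\Om$ and $\infty\in B_{d_\pip}(x,r/2)$; (iii) $x\in\Om$ and $\infty\notin B_{d_\pip}(x,C_*r)$; or (iv) $x\in\Om$ and $\infty\in B_{d_\pip}(x,C_*r)\setminus B_{d_\pip}(x,r/2)$. These are precisely the hypotheses handled, respectively, by Lemmas~\ref{lem:balls-at-infinity}, \ref{lem:large}, \ref{lem:small}, and \ref{lem:intermediate}. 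Taking $C_{\mathrm{loc}} := \max\{C_1,C_2,C_3,C_4\}$ then yields the uniformly local doubling estimate
\[
\mu_\pip(B_{d_\pip}(x,2r)) \le C_{\mathrm{loc}}\,\mu_\pip(B_{d_\pip}(x,r))
\]
for every $x\in\Om_\pip$ and every $0 < r \le r_1$.

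Next, to pass from this to a global doubling condition valid for all radii, I would use the compactness of $(\overline{\Om_\pip}^\pip, d_\pip) = (\overline{\Om}\cup\{\infty\}, d_\pip)$, which has been established in~\cite{GS}; note also that $\mu_\pip(\overline{\Om_\pip}^\pip) = \mu_\pip(\Om)$ is finite by condition~\eqref{additionalmupip} applied with $m=0$. By compactness I select a finite cover $\{B_{d_\pip}(x_j, r_1/2)\}_{j=1}^N$ of $\overline{\Om_\pip}^\pip$, each member of which has strictly positive $\mu_\pip$-measure since $\mu_\pip$ has $\mu$-a.e.\ positive density on $\Om$. Let $m_0 := \min_j \mu_\pip(B_{d_\pip}(x_j, r_1/2)) > 0$. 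Any $x\in\Om_\pip$ lies in some $B_{d_\pip}(x_j, r_1/2)$, whence $B_{d_\pip}(x_j, r_1/2)\subset B_{d_\pip}(x, r_1)$ and $\mu_\pip(B_{d_\pip}(x, r_1))\ge m_0$. For $r > r_1$ we then trivially have
\[
\mu_\pip(B_{d_\pip}(x, 2r)) \le \mu_\pip(\overline{\Om_\pip}^\pip) \le \frac{\mu_\pip(\overline{\Om_\pip}^\pip)}{m_0}\,\mu_\pip(B_{d_\pip}(x, r)).
\]

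The global doubling constant is therefore $\max\{C_{\mathrm{loc}},\, \mu_\pip(\overline{\Om_\pip}^\pip)/m_0\}$. The only point requiring attention is the verification that cases (i)--(iv) genuinely exhaust all configurations uniformly in $x$ at the threshold radius $r_1$, which is precisely why $r_1$ is defined via the most restrictive hypothesis, namely that of Lemma~\ref{lem:intermediate}; the remainder amounts to routine bookkeeping of the structural constants produced by the four preceding lemmata.
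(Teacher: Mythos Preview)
Your proposal is correct and follows essentially the same approach as the paper: assemble the four lemmata into a uniformly local doubling inequality via the exhaustive case split on the position of $\infty$ relative to $B_{d_\pip}(x,r/2)$ and $B_{d_\pip}(x,C_*r)$, and then upgrade to global doubling using compactness of $\overline{\Om_\pip}^\pip$. The only minor remark is that compactness is proved in Proposition~\ref{prop:compactness} of the present paper rather than in~\cite{GS}; otherwise your write-up simply makes explicit the bookkeeping that the paper leaves to the reader.
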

	
	Conversely, $\overline{\Om_\pip}^{\pip}$ cannot be compact with respect to $d_\pip$ (hence $\mu_\pip$ cannot be 
	doubling on $\Om_\pip$) without $\partial\Om$ being bounded, as we will see now.
	
	\begin{proposition}\label{prop:compactness}
		$\overline{\Om_\pip}^{\pip}=\overline\Om\cup\{\infty\}$ is compact with respect to the metric $d_\pip$ if and only if 
		$\partial\Omega$ is bounded with respect to the metric $d$.
	\end{proposition}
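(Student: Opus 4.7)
The plan is to handle the two implications separately; the forward direction is short, while the converse proceeds by a contrapositive case analysis.

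For the forward direction, assume $\partial\Om$ is $d$-bounded so that the standing hypotheses of the paper are in force. By Theorem~\ref{thm:double}, $(\Om_\pip, d_\pip, \mu_\pip)$ is doubling, hence $(\Om_\pip, d_\pip)$ is a doubling metric space, and its completion $\overline{\Om_\pip}^\pip$ is a complete doubling metric space, hence proper. Combining Lemma~\ref{lem:dist-to-infty} with Condition~(5) of Definition~\ref{def:phi} (equivalently,~\eqref{additionalpip}), every point of $\Om_\pip$ has $d_\pip$-distance to $\infty$ uniformly bounded, so $\overline{\Om_\pip}^\pip$ has finite $d_\pip$-diameter. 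A bounded proper metric space is compact.

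For the reverse direction I argue by contrapositive: assuming $\partial\Om$ is $d$-unbounded, I produce a sequence in $\overline{\Om_\pip}^\pip$ with no $d_\pip$-convergent subsequence. Choose $\{\zeta_k\} \subset \partial\Om$ with $d(\zeta_k, \zeta_j) \ge 2$ for $k \ne j$; such a sequence exists by unboundedness. Suppose, toward a contradiction, that along a subsequence (still denoted $\{\zeta_k\}$), $\zeta_k \to z$ in $d_\pip$ for some $z \in \overline{\Om_\pip}^\pip$. I rule out each location of $z$ in turn. If $z = \infty$, approximating each $\zeta_k$ by interior points in $\Om_0$ and noting $2^0\pip(2^0) = 1$, Lemma~\ref{lem:dist-to-infty} gives $d_\pip(\cdot, \infty) \ge \kappa^{-1}$ on $\Om_0$, which passes to the limit and contradicts $d_\pip(\zeta_k, \infty) \to 0$. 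If $z \in \Om$, then $d_\Om(z) > 0$, and since $d_\Om$ is $1$-Lipschitz along any rectifiable curve, a coarea-type estimate yields $\ell_\pip(\gamma) \ge \int_{d_\Om(w)}^{d_\Om(z)}\pip(t)\, dt$ for each curve $\gamma$ in $\Om$ from $w$ to $z$ with $d_\Om(w) < d_\Om(z)$; approximating $\zeta_k$ by interior points with $d_\Om \to 0$ then forces $d_\pip(\zeta_k, z) \ge \int_0^{d_\Om(z)} \pip(t)\, dt > 0$, a fixed positive lower bound contradicting $d_\pip(\zeta_k, z) \to 0$. If $z \in \partial\Om$, choose $x_k \in \Om_0$ with $d_\pip(x_k, \zeta_k) < 1/k$ and $y_j \in \Om_0$ with $d_\pip(y_j, z) \to 0$; since $\pip(2^0) = 1$, Lemma~\ref{lem:nearby-points} applied to the pair $(x_k, y_j)$ gives $d(x_k, y_j) \le C_A\, d_\pip(x_k, y_j)$ once the right-hand side is below $c$. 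Passing to the limit in $j$, using the identification between the $d$- and $d_\pip$-completions of $\Om$ near $\partial\Om$ established in~\cite{GS}, produces $d(x_k, z) \le C_A\, d_\pip(x_k, z) \le C_A(1/k + d_\pip(\zeta_k, z)) \to 0$, and an analogous argument bounds $d(x_k, \zeta_k)$ by $C_A/k$; the triangle inequality then yields $d(\zeta_k, z) \to 0$, contradicting the separation $d(\zeta_k, \zeta_j) \ge 2$.

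The main obstacle is the third case, since Lemma~\ref{lem:nearby-points} is stated only for pairs of points in $\Om$; one must approximate both the tail of the convergent subsequence and the limit $z$ by sequences in $\Om_0$ and invoke the $d$-versus-$d_\pip$ identification on $\partial\Om$ from~\cite{GS} in order to convert bi-Lipschitz bounds between interior approximants into the desired $d$-convergence on the boundary.
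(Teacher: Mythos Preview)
Your forward direction is circular. You invoke Theorem~\ref{thm:double} to conclude that $(\Om_\pip,d_\pip)$ is a doubling metric space, hence proper after completion. But look at how the paper arrives at Theorem~\ref{thm:double}: Lemmata~\ref{lem:balls-at-infinity}--\ref{lem:intermediate} only establish that $\mu_\pip$ is \emph{uniformly locally} doubling, i.e.\ for radii up to some fixed $r_0$; the upgrade to global doubling is obtained precisely by appealing to the compactness of $\overline{\Om_\pip}^\pip$, which is the forward implication of Proposition~\ref{prop:compactness} itself. Uniformly local doubling together with boundedness and completeness does not by itself yield properness (an infinite set with the discrete metric scaled to diameter $1$ is a counterexample), so you cannot simply downgrade your appeal to the lemmata. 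The paper's proof avoids this trap by arguing sequentially with respect to the \emph{original} metric: a sequence in $\overline{\Om_\pip}^\pip$ either has a subsequence converging to $\infty$, or by Lemma~\ref{lem:dist-to-infty} lies in a finite union $\bigcup_{n=0}^{N_0}\overline{\Om_n}$, which is $d$-bounded (here is where boundedness of $\partial\Om$ enters) and hence $d$-compact by properness of $(\overline{\Om},d)$; local bi-Lipschitz equivalence of $d$ and $d_\pip$ (Lemma~\ref{lem:nearby-points}) then transfers compactness to $d_\pip$.

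Your reverse direction is correct but considerably more elaborate than necessary. The paper simply picks a $\tau$-separated sequence in $\partial\Om$ with $\tau\le 1/10$ small enough that the local isometry of $d$ and $d_\pip$ on $\partial\Om$ from~\cite[Lemma~2.6]{GS} applies directly, giving $d_\pip(\zeta_k,\zeta_j)\ge\tau$ and $d_\pip(\zeta_k,\infty)\ge 1$ in one stroke. Your case analysis on the location of a putative limit $z$, together with the coarea-type bound and the interior approximation to invoke Lemma~\ref{lem:nearby-points}, does work, but it re-derives by hand what that cited local isometry already packages.
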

	
	\begin{proof}
		Assume that $\partial\Omega$ is bounded with respect to $d$. Consider a sequence 
		$(x_k)\subset \overline{\Om_\pip}^{\pip}$. If infinitely-many of the terms in the sequence equal $\infty$ or if 
		$\liminf d_\pip(x_k,\infty)=0$, then there is a subsequence of $(x_k)$ converging to infinity with respect to $d_\pip$. 
		As such, we assume without loss of generality that $(x_k)\subset\overline\Om$ and that 
		$d_\pip(x_k,\infty)\geq\tfrac{\tau}{2}$ for all $k$, where $\tau:=\liminf d_\pip(x_k,\infty)>0$. It follows from 
		Lemma~\ref{lem:dist-to-infty} that there is some $N_0$ for which
		\[
		x_k\in \overline{\bigcup_{n=0}^{N_0}\Om_n}= \bigcup_{n=0}^{N_0}\overline{\Om_n}
		\]
		for all $k$. Each set $\overline{\Om_n}$ is closed in $\overline\Om$ with respect to $d$ and bounded since 
		$\diam_d(\Om_n)\le 2^{n+2}+\diam_d(\partial\Om)<\infty$. Hence, $\overline{\Om_n}$ is compact with respect 
		to $d$ as $\overline\Om$ is proper. By Lemma~\ref{lem:nearby-points}, $d$ and $d_\pip$ are locally bi-Lipschitz 
		equivalent and so the two metrics are homeomorphic on $\overline{\Om_n}$. Hence, 
		$\bigcup_{n=0}^{N}\overline{\Om_n}$ is compact with respect to $d_\pip$. Therefore, there exists 
		a $d_\pip$-convergent subsequence of $(x_k)$. 
		
		Assume now that $\partial\Omega$ is unbounded with respect to $d$. Then 
		we may find a sequence $(\zeta_k)\subset\partial\Om$ such that 
		$d(\zeta_k,\zeta_{j})\geq \tau\in(0,\tfrac{1}{10}]$ for all $k\neq{j}$. It follows 
		from the local isometry of $d$ and $d_\pip$ on $\partial\Omega$ (see Lemma 2.6 of \cite{GS}) 
		that $d_\pip(\zeta_k,\zeta_j)\geq\tau$ for all $k\neq\ j$, and so $(\zeta_k)$ cannot converge in 
		$\overline\Om$ with respect to $d_\pip$. Moreover, $d_\pip(\zeta_k,\infty)\geq 1$ for all $k$ and 
		so cannot converge to $\infty$ with respect to $d_\pip$.
	\end{proof}

	\section{Uniformity of $\Om_\pip\setminus\{\infty\}$}
	
	In~\cite{GS} it was shown that $\Om_\pip=\Om\cup\{\infty\}$ is a uniform domain with respect to the metric $d_\pip$ and that
	$\partial\Om_\pip=\partial\Om$. It is not always the case that the removal of a point from a uniform domain results in a uniform
	domain, as seen by the example domain $(-1,0]\times[0,1)\cup[0,1)\times(-1,0]\subset X$ where the metric space 
	$X=\R^2\setminus((-1,0)^2\cup(0,1)^2)$. However, in our setting, $\infty\in\Om_\pip$ has a special role, given the fact that
	$\Om$ itself is a uniform domain with respect to the metric $d$. 
	
	In the rest of this section, by increasing the uniformity constant $K$ if needed, we can find uniform curves for which
	every subcurve is also a uniform curve (all with respect to the metric $d_\pip$), see~\cite[Theorem~2.10]{BHK}.
	In the remainder of this section, when
	we choose a $K$-uniform curve with respect to the metric $d_\pip$, we will also implicitly assume that every subcurve is also
	$K$-uniform with respect to $d_\pip$. 
	
	\begin{lemma}\label{lem:ann-qcvx}
		Let $0<r\le r_0/C$ and $x,y\in B_{d_\pip}(\infty,r)\setminus B_{d_\pip}(\infty, r/2)$, where $C=2\kappa^2C_UC_\pip$. Then 
		there is a curve 
		$\beta\subset B_{d_\pip}(\infty, Cr)\setminus B_{d_\pip}(\infty, r/C)$ with end points $x,y$ such that
		$\ell_\pip(\beta)\approx d_\pip(x,y)$. 
	\end{lemma}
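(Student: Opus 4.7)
The plan is to take $\beta$ to be a $C_U$-uniform curve in $\Om$ with respect to the \emph{original} metric $d$, and verify that such a curve automatically satisfies all required properties by controlling which layers $\Om_n$ it visits.

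First, some setup. Since $r\le r_0/C$ and $d_\pip(x,\infty),d_\pip(y,\infty)\in[r/2,r]$, Lemma~\ref{lem:dist-to-infty} forces $x\in\Om_{m_x}$ and $y\in\Om_{m_y}$ with $2^{m_x}\pip(2^{m_x}),\,2^{m_y}\pip(2^{m_y})\in[r/(2\kappa),\kappa r]$. The reverse doubling condition~(4) on $\pip$ then bounds $|m_x-m_y|$ by a universal constant $M_0$, and the choice of $r_0$ ensures $m_x,m_y>n_0+2$. Writing $m:=\min(m_x,m_y)$, so $\max(m_x,m_y)\le m+M_0$, and shrinking $r_0$ if necessary so that $\diam_d(\partial\Om)\le 2^m$, the triangle inequality through $\partial\Om$ gives $d(x,y)\le d_\Om(x)+\diam_d(\partial\Om)+d_\Om(y)\lesssim 2^m$.

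Let $\tilde\gamma$ be a $C_U$-uniform curve in $\Om$ from $x$ to $y$ with respect to $d$, so $L:=\ell_d(\tilde\gamma)\le C_Ud(x,y)\lesssim 2^m$. The key claim is that every $z\in\tilde\gamma$ lies in $\Om_n$ for some $n$ with $|n-m|$ bounded by a universal constant. The upper bound $d_\Om(z)\le d_\Om(x)+\ell_d(\tilde\gamma[x,z])\lesssim 2^m$ immediately gives $n\le m+C_2$. For the lower bound, setting $t:=\ell_d(\tilde\gamma[x,z])$ and $s:=L-t$, I distinguish three subcases. If $t\le 2^{m-2}$, the triangle inequality yields $d_\Om(z)\ge d_\Om(x)-t\ge 2^{m-2}$; if $s\le 2^{m-2}$, the symmetric estimate (using $d_\Om(y)\ge 2^{m_y-1}\ge 2^{m-1}$) gives $d_\Om(z)\ge 2^{m-2}$; and if both $t,s>2^{m-2}$, then the uniformity bound $d_\Om(z)\ge \min(t,s)/C_U$ yields $d_\Om(z)\ge 2^{m-2-n_0}$. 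Thus $n\ge m-n_0-1$ in every subcase.

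With $n\in[m-n_0-1,m+C_2]$ uniformly on $\tilde\gamma$, the doubling and reverse-doubling properties of $\pip$ give $\pip(d_\Om(z))\approx\pip(2^m)$ throughout $\tilde\gamma$, and Lemma~\ref{lem:dist-to-infty} then gives $d_\pip(z,\infty)\approx 2^m\pip(2^m)\approx r$. Consequently, $\tilde\gamma\subset B_{d_\pip}(\infty,Cr)\setminus B_{d_\pip}(\infty,r/C)$ once $C$ absorbs $\kappa$, $C_U$, and $C_\pip$—which is precisely what the explicit choice $C=2\kappa^2C_UC_\pip$ in the statement is designed to do.

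It remains to bound the $d_\pip$-length. Since $\pip(d_\Om(z))\approx\pip(2^m)$ on $\tilde\gamma$, $\ell_\pip(\tilde\gamma)\approx\pip(2^m)\ell_d(\tilde\gamma)\lesssim\pip(2^m)d(x,y)$. To see that this is $\lesssim d_\pip(x,y)$, I split cases: if $d_\pip(x,y)<c\pip(2^m)\cdot 2^m$ with $c$ from Lemma~\ref{lem:nearby-points}, that lemma gives $\pip(2^m)d(x,y)\lesssim d_\pip(x,y)$ directly; otherwise $d_\pip(x,y)\gtrsim\pip(2^m)\cdot 2^m\approx r\gtrsim\pip(2^m)d(x,y)$ follows from $d(x,y)\lesssim 2^m$. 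Combined with the trivial $\ell_\pip(\tilde\gamma)\ge d_\pip(x,y)$, this gives the desired comparability. The main technical obstacle is the layer-containment step, which requires patching the triangle-inequality bound near each endpoint with the uniform-curve bound valid in the interior; the rest of the argument is then a direct calculation using the doubling of $\pip$ and Lemma~\ref{lem:dist-to-infty}.
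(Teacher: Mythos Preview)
Your proposal is correct and follows essentially the same approach as the paper: take a $C_U$-uniform curve $\beta$ in $(\Om,d)$ from $x$ to $y$, show that it stays in a bounded range of layers $\Om_n$ around $n=m$, deduce that $\pip(d_\Om(\cdot))\approx\pip(2^m)$ along $\beta$, and read off both the annular containment (via Lemma~\ref{lem:dist-to-infty}) and the length comparability. The paper's proof is somewhat terser in two places where you supply more detail: the lower bound on $d_\Om(z)$ for $z\in\beta$ (where you split into three subcases near each endpoint and in the interior, while the paper simply asserts $\beta\subset\bigcup_{j=m-n_0-N_0}^{m+n_0+N_0}\Om_j$), and the comparability $\ell_\pip(\beta)\approx d_\pip(x,y)$ (where you invoke Lemma~\ref{lem:nearby-points} for small $d_\pip(x,y)$ and the bound $d(x,y)\lesssim 2^m$ otherwise, while the paper writes $d_\pip(x,y)\approx\pip(2^m)d(x,y)$ in one line).
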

	
	\begin{proof}
		Let $m$ be a positive integer such that $x\in\Om_m$. Then by~\eqref{additionalpip} and by Lemma~\ref{lem:dist-to-infty}
		we know that $d_\pip(x,\infty)\approx 2^m\pip(2^m)$. Thus, with $y\in\Om_k$ we also have $2^k\pip(2^k)\approx 2^m\pip(2^m)$.
		It follows that there is some $N_0\in\N$ such that $|k-m|\le N_0$, see the proof of Lemma~\ref{lem:balls-at-infty} above. Moreover,
		by the choice of $r_0$ we also have that $k\ge 2n_0$ and $m\ge 2n_0$.
		
		Let $\beta$ be a $C_U$-uniform curve in $\Om$, with respect to the
		original metric $d$, connecting $x$ to $y$. Then, from~\eqref{eq:diam-bd}  
		we see that $d(x,y)\lesssim 2^m$,
		and so $\ell_d(\beta)\lesssim 2^{m+n_0}$. Hence 
		\[
		\beta\subset\bigcup_{j=m-n_0-N_0}^{m+n_0+N_0}\Om_j,
		\]
		whence we obtain 
		\[
		\ell_\pip(\beta)\approx \pip(2^m)\ell_d(\beta)\approx\pip(2^m)\, d(x,y)\lesssim\pip(2^m)\, 2^m\approx r.
		\] 
		By Lemma~\ref{lem:dist-to-infty} above and our choice of $C$, we have
		\[
		\beta\subset B_{d_\pip}(\infty, Cr)\setminus B_{d_\pip}(\infty, r/C).
		\]
		It also follows that $d_\pip(x,y)\approx \pip(2^m)\, d(x,y)\approx\ell_\pip(\beta)$, thus completing the proof.
	\end{proof}
	
	In what follows, let $K\ge 1$ denote the uniformity constant of $\Om_\pip$. If $\gamma_1$ is a curve with end points $x$ and $y$, and $\gamma_2$ is a curve with end points $y$ and $z$, then we denote by $\gamma_1+\gamma_2$ the concatenation of two curves $\gamma_1$ and $\gamma_2$, having end points $x$ and $z$. 
	
	\begin{theorem}\label{thm:uniform}
		The set $\Om_\pip\setminus\{\infty\}$ is a uniform domain with respect to the metric $d_\pip$.
	\end{theorem}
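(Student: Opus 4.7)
Given $x, y \in \Om = \Om_\pip \setminus \{\infty\}$, set $r = d_\pip(x, y)$. Since $\Om_\pip$ is already $K$-uniform with respect to $d_\pip$ and $\partial\Om = \partial\Om_\pip$, the uniformity condition with respect to $\partial\Om$ can be inherited in most cases; the new task is to enforce the uniformity condition treating $\infty$ as a boundary point of $\Om_\pip \setminus \{\infty\}$. I distinguish two regimes.

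\emph{Regime 1 (both far from $\infty$):} If $\min\{d_\pip(x, \infty), d_\pip(y, \infty)\} \geq 2Kr$, take the $K$-uniform curve $\gamma_0$ from $x$ to $y$ in $\Om_\pip$. Since $\ell_\pip(\gamma_0) \leq Kr$ forces $\gamma_0 \subset B_{d_\pip}(x, Kr)$, every $z \in \gamma_0$ satisfies $d_\pip(z, \infty) \geq Kr \geq 2 \min\{\ell_\pip(\gamma_0[x,z]), \ell_\pip(\gamma_0[z,y])\}$, so the uniformity of $\gamma_0$ with respect to $\{\infty\}$ is automatic; combined with the inherited uniformity with respect to $\partial\Om$, this yields a single uniformity constant depending only on $K$.

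\emph{Regime 2 (at least one point close to $\infty$):} Otherwise the triangle inequality places both $x, y$ within $d_\pip$-distance at most $(2K+1)r$ of $\infty$. Fix $d_\pip$-geodesics $\eta_x, \eta_y$ from $x, y$ to $\infty$ (Lemma~\ref{lem:geodesic}), set $r_* := \tfrac{1}{2}\min\{d_\pip(x, \infty), d_\pip(y, \infty)\}$, and let $x' \in \eta_x$, $y' \in \eta_y$ be the unique points with $d_\pip(x', \infty) = d_\pip(y', \infty) = r_*$. Then $x', y' \in B_{d_\pip}(\infty, 2r_*) \setminus B_{d_\pip}(\infty, r_*)$, so when $r_*$ is small enough to satisfy the scale hypothesis of Lemma~\ref{lem:ann-qcvx}, that lemma supplies an annular connector $\beta$ from $x'$ to $y'$, trapped near $\infty$, with $\ell_\pip(\beta) \approx d_\pip(x', y') \leq 2r_*$. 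When $r_*$ is too large for that lemma, the points $x', y'$ are already well-separated from $\infty$ and $\beta$ can be replaced by the $\Om_\pip$-uniform curve between them, reducing the situation to Regime~1. Setting $\gamma := \eta_x[x, x'] + \beta + \eta_y[y', y]$, summing the three contributions yields $\ell_\pip(\gamma) \lesssim r$.

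The uniformity of $\gamma$ in Regime~2 is then verified piecewise. For the bound against $d_\pip(z, \infty)$, on each geodesic segment both $\ell_\pip(\gamma[x, z])$ and $d_\pip(z, \infty)$ are linear in the arclength parameter, so a short case analysis (on whether $z$ lies in the shorter or longer half of $\gamma$) gives $\min\{\ell_\pip(\gamma[x, z]), \ell_\pip(\gamma[z, y])\} \lesssim d_\pip(z, \infty)$; on $\beta$ both sides are of order $r_*$ by the annular confinement. For the bound against $\dist_{d_\pip}(z, \partial\Om)$, the fact that every $z \in \gamma$ has $d_\pip(z, \infty) \lesssim r$ forces $z$, via Lemma~\ref{lem:dist-to-infty} and the strict decay of $m \mapsto 2^m \pip(2^m)$ (a consequence of Condition~(4) of Definition~\ref{def:phi}), into layers $\Om_m$ with $m$ bounded below by a function of $r$, yielding a matching quantitative lower bound on $\dist_{d_\pip}(z, \partial\Om)$. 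The main obstacle is the scale-matching in Regime~2 when $d_\pip(x, \infty)$ and $d_\pip(y, \infty)$ differ by many dyadic factors, which is precisely why $r_*$ is anchored at their minimum (so both $x'$ and $y'$ remain reachable along their respective geodesics without overshooting $\infty$) and why a careful bookkeeping of the constants from Lemmas~\ref{lem:dist-to-infty}, \ref{lem:nearby-points}, and~\ref{lem:ann-qcvx} is required.
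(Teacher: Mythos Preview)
Your overall strategy—treat $\infty$ as the only new boundary point and build a curve that avoids it quantitatively—is sound, and Regime~1 is fine. The gap is in Regime~2, specifically in the verification of the twist condition with respect to $\partial\Om$.

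You build $\gamma$ from $d_\pip$-geodesics $\eta_x,\eta_y$ to $\infty$ plus an annular connector $\beta$. Your justification for $\dist_{d_\pip}(z,\partial\Om)\gtrsim\min\{\ell_\pip(\gamma[x,z]),\ell_\pip(\gamma[z,y])\}$ is that $d_\pip(z,\infty)\lesssim r$ forces $z$ into a layer $\Om_m$ with $m$ bounded below. But this inference yields nothing when $r$ is comparable to $\diam_{d_\pip}(\Om_\pip)$: then every point of $\Om$ satisfies $d_\pip(\cdot,\infty)\lesssim r$, and the lower bound on $m$ is vacuous. That large-$r$ situation is exactly what occurs in Regime~2 when one endpoint, say $y$, lies near $\partial\Om$ (so $d_\pip(y,\infty)\approx 1$ and hence $r\gtrsim 1$). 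The initial portion of the geodesic $\eta_y$ then sits in $\Om_0$, where $\dist_{d_\pip}(\cdot,\partial\Om)\approx d_\Om(\cdot)$ can be arbitrarily small, and nothing in your argument controls it. A $d_\pip$-geodesic to $\infty$ is not known to satisfy the twist condition relative to $\partial\Om$; establishing that would itself require a nontrivial argument.

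The paper sidesteps this issue entirely: it keeps the $K$-uniform curve $\gamma$ in $\Om_\pip$ as the backbone (so the twist condition relative to $\partial\Om$ is inherited automatically on the unmodified portions) and replaces only the sub-arc that dips into $B_{d_\pip}(\infty,d_\pip(x,\infty)/4CK)$ by the annular connector $\beta$. Since that replacement lives in an annulus about $\infty$, it is uniformly far from $\partial\Om$, and the paper checks the $\partial\Om$-twist condition on $\beta$ directly using the uniformity of $\gamma$ at the splice point $w_1$. Your construction throws away this backbone and hence loses the free $\partial\Om$-twist bound; to salvage your approach you would need an independent argument that $d_\pip$-geodesics to $\infty$ are themselves uniform relative to $\partial\Om$, which is not supplied.
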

	
	\begin{proof}
		From~\cite{GS} we know that $(\Om_\pip,d_\pip)$ is a uniform domain.
		Let $x,y\in\Om_\pip\setminus\{\infty\}$ with $x\ne y$, and let $\gamma$ be a $K$-uniform curve in $\Om_\pip$, with the uniformity
		with respect to the metric $d_\pip$, with end points $x,y$.  
		Without loss of generality let $d_\pip(x,\infty)\le d_\pip(y,\infty)$. We now consider two cases.
		\\
		
		\noindent{\bf Case 1:} We suppose first that 
		\[
		\gamma\subset \Omega_\pip\setminus B_{d_\pip}(\infty, d_\pip(x,\infty)/4CK),
		\]
		where $C=2\kappa^2C_UC_\pip$ is from Lemma~\ref{lem:ann-qcvx} above.
		In this case, for $z\in B_{d_\pip}(\infty, 4d_\pip(x,\infty))\cap\gamma$, we have that 
		\[
		\frac{d_\pip(x,\infty)}{4CK} \le d_\pip(z,\infty)< 4d_\pip(x,\infty)
		\]
		and so
		\[
		\ell_\pip(\gamma[x,z])\le K\, d_\pip(x,z)<5K\, d_\pip(x,\infty)\le 20CK^2\, d_\pip(z,\infty).
		\]
		For $z\in \gamma\setminus B_{d_\pip}(\infty, 4d_\pip(x,\infty))$, we have that
		\[
		\ell_\pip(\gamma[x,z])\le K\, d_\pip(x,z)\le K[d_\pip(x,\infty)+d_\pip(z,\infty)]\le 2K\, d_\pip(z,\infty).
		\]
		Combining the above two subcases, we have that for each $z\in\gamma$,
		\[
		d_\pip(z,\infty)\ge \frac{1}{20CK^2}\, \ell_\pip(\gamma[x,z])\ge \frac{1}{20CK^2}\, \min\{\ell_\pip(\gamma[x,z]),\ell_\pip(\gamma[z,y])\}.
		\]
		
		\noindent{\bf Case 2:} If, instead, we have
		\[
		\gamma\cap B_{d_\pip}(\infty, d_\pip(x,\infty)/4CK)\ne \emptyset,
		\]
		then let $w_1, w_2\in \gamma$ such that 
		\[
		\gamma[x,w_1]\cup\gamma[w_2,y]\subset\Om_\pip\setminus B_{d_\pip}(\infty, d_\pip(x,\infty)/2CK)
		\]
		and
		\[
		d_\pip(\infty,w_1)=d_\pip(\infty, w_2)=\frac{d_\pip(x,\infty)}{2CK}.
		\]
		Let $\beta$ be a curve from Lemma~\ref{lem:ann-qcvx} with end points $w_1$, $w_2$. For $z\in\gamma[x,w_1]$,
		a repetition of the argument of Case~1 above yields $d_\pip(z,\infty)\ge \tfrac{1}{20CK^2}\ell_\pip(\gamma[x,z])$.
		For $z\in\beta$, we have that 
		\[
		d_\pip(z,\infty)\ge \frac{d_\pip(x,\infty)}{2C^2K},
		\]
		and
		\begin{align*}
			\ell_\pip(\gamma[x,w_1])+\ell_\pip(\beta[w_1,z])&\le K\, d_\pip(x,w_1)+\ell_\pip(\beta)\\
			&\lesssim d_\pip(x,w_1)+d_\pip(w_1,w_2)\\
			&\lesssim d_\pip(x,\infty)+2d_\pip(w_1,\infty)+d_\pip(w_2,\infty)\\
			&\lesssim d_\pip(x,\infty).
		\end{align*}
		It follows that $d_\pip(z,\infty)\gtrsim \ell_\pip(\gamma[x,w_1]+\beta[w_1,z])$.
		For $z\in\gamma[w_2,y]\cap B_{d_\pip}(\infty, 4d_\pip(x,\infty))$, we have
		that 
		\[
		d_\pip(z,\infty)\ge \frac{d_\pip(x,\infty)}{2CK},
		\]
		and 
		\[
		\ell_\pip(\gamma[x,w_1]+\beta+\gamma[w_2,z])\lesssim  d_\pip(x,\infty)+4d_\pip(x,\infty)\lesssim d_\pip(x,\infty),
		\]
		and therefore 
		\[
		d_\pip(z,\infty)\gtrsim  \ell_\pip(\gamma[x,w_1]+\beta+\gamma[w_2,z]).
		\]
		For $z\in\gamma[w_2,y]\setminus B_{d_\pip}(\infty, 4d_\pip(x,\infty))$, we have
		\begin{align*}
			\ell_\pip(\gamma[w_2,z])\le K\, d_\pip(w_2,z)
			&\le K[d_\pip(w_2,\infty)+d_\pip(z,\infty)]\\
			&=K[d_\pip(x,\infty)/(2CK)+d_\pip(z,\infty)]\\ 
			& \lesssim d_\pip(z,\infty).
		\end{align*}
		Observe also from the above discussion that 
		\[
		\ell_\pip(\gamma[x,w_1]+\beta)\lesssim d_\pip(x,\infty)\lesssim d_\pip(z,\infty).
		\]
		It then again follows that $d_\pip(z,\infty)\gtrsim  \ell_\pip(\gamma[x,w_1]+\beta+\gamma[w_2,z])$.
		Combining the above four possibilities in this case, we obtain for each 
		$z\in \widehat{\gamma}:=\gamma[x,w_1]+\beta+\gamma[w_2,y]$ that
		\[
		d_\pip(z,\infty)\gtrsim \ell_\pip(\widehat{\gamma}[x,z]).
		\]
		
		From Cases~1 and~2 above we see that there is a curve $\widehat{\gamma}$ with end points $x,y$ such that
		for each $z\in\widehat{\gamma}$ we have that
		\[
		d_\pip(z,\infty)\gtrsim \ell_\pip(\widehat{\gamma}[x,z]),
		\]
		and moreover, $\ell_\pip(\widehat{\gamma})\lesssim d_\pip(x,y)$. Here, in Case~1 above, we merely set 
		$\widehat{\gamma}=\gamma$, the original uniform curve with respect to $d_\pip$, connecting $x$ to $y$.
		So in Case~1 we have from the $K$-uniformity of $\gamma$ with respect to $d_\pip$ that
		$\ell_\pip(\widehat{\gamma})\le K\, d_\pip(x,y)$. In Case~2 we have that
		$\ell_\pip(\widehat{\gamma})\le \ell_\pip(\gamma)+\ell_\pip(\beta)\le K\, d_\pip(x,y)+\ell_\pip(\beta)$.
		Moreover, by the choice of $\beta$, we have that $\ell_\pip(\beta)\lesssim d_\pip(w_1,w_2)\le \ell_\pip(\gamma)$
		because $w_1,w_2$ belong to $\gamma$, and hence again we have that
		$\ell_\pip(\widehat{\gamma})\lesssim d_\pip(x,y)$, thus justifying the inequality given above.
		Thus to show that $\widehat{\gamma}$ is a uniform curve in $\Om_\pip\setminus\{\infty\}$, it now only
		remains to check $\dist_\pip(z,\partial\Om)$ for each $z\in\widehat{\gamma}$. 
		
		In the situation considered in Case~1 above, the curve $\widehat{\gamma}$ is also a $K$-uniform curve in
		$\Om_\pip$ with respect to $d_\pip$, and so we immediately have 
		\[
		\text{dist}_\pip(z,\partial\Om)\ge \tfrac{1}{K}\min\{\ell_\pip(\widehat{\gamma}[x,z]), \ell_\pip(\widehat{\gamma}[z,y])\}
		\]
		as desired. In the situation considered in Case~2, the above inequality holds also when $z\in\gamma[x,w_1]\cup\gamma[w_2,y]$.
		We set $r=d_\pip(x,\infty)/2CK$. Note that then $\ell_\pip(\gamma[x,w_1])\ge (2CK-1)r$, and hence by the 
		uniformity of the curve $\gamma$, we have that
		\[
		\text{dist}_\pip(w_1,\partial\Om)\ge \frac{2CK-1}{K}\, r.
		\]
		Hence, for each $z\in\beta$, we obtain
		\begin{align*}
			\text{dist}_\pip(z,\partial\Om)&\ge \text{dist}_\pip(w_1,\partial\Om)-d_\pip(w_1,\infty)-d_\pip(z,\infty)\\
			&\ge \frac{2CK-1}{K}\, r-r-Cr\\
			&=(C-1-K^{-1})\, r.
		\end{align*}
		Here we used the fact that $\beta\subset B_{d_\pip}(\infty, Cr)\setminus B_{d_\pip}(\infty, r/C)$. 
		Note that  $\ell_\pip(\gamma[x,w_1]+\beta)\lesssim r$. As
		$C>2$, we now have the desired inequality $\dist_\pip(z,\partial\Om)\gtrsim \ell_\pip(\gamma[x,w_1]+\beta)$.
		The claim follows. 
	\end{proof}
	
	\begin{remark}
		The proof of the above theorem can be modified to show that if $\Om$ is a uniform domain and $z_0\in\Om$ is such that
		annular quasiconvexity as in Lemma~\ref{lem:ann-qcvx} holds with $z_0$ playing the role of $\infty$, then
		$\Om\setminus\{z_0\}$ is also a uniform domain. 
	\end{remark}
	
	\begin{remark}
		The utility of uniformity of $\Om_\pip\setminus\{\infty\}$ stems from the fact that when we transform $\Om$ under the metric
		$d_\pip$ and the measure $\mu_\pip$, all the functions $u$ in the Dirichlet-Sobolev class $D^{1,p}(\Om)$ 
		belong to the 
		Dirichlet-Sobolev class of the transformed space $D^{1,p}(\Om_\pip\setminus\{\infty\})$; note that as sets,
		$\Om=\Om_\pip\setminus\{\infty\}$. In order to gain control over the behavior of transformed functions in the 
		uniform domain $\Om_\pip$, we need to know that functions in $D^{1,p}(\Om_\pip\setminus\{\infty\})$ also have an 
		extension to $\infty$ that belongs to $D^{1,p}(\Om_\pip)$; see the discussion in Section~\ref{Sect:Dirichlet} 
		and Proposition~\ref{lem:D-to-N} below.
		Knowledge of uniformity of $\Om_\pip\setminus\{\infty\}$, together with the information that 
		$\Om_\pip\setminus\{\infty\}$ satisfies a $p$-Poincar\'e inequality when $\Om$ itself does (see 
		Section~\ref{Sect:PI} below) aids us in this extension.
	\end{remark}

	\section{Poincar\'e inequalities}\label{Sect:PI}
	
	The goal of this section is to demonstrate that $(\Om_\pip,d_\pip,\mu_\pip)$ supports a 
	$p$-Poincar\'e inequality when $(\Om,d,\mu)$ supports a sub-Whitney $p$-Poincar\'e inequality
	as in Definition~\ref{def:poin}.
	This result can be proved using a variant of the Boman chain condition method that Haj\l{}asz and 
	Koskela~\cite{HaK} used to prove that if all balls satisfy a 
	Poincar\'e inequality, then all sets satisfying a chain condition also satisfy a Poincar\'e inequality. 
	We do not know {\it a priori} whether all balls in $(\Om,d_\pip,\mu_\pip)$
	satisfy a Poincar\'e inequality, but all balls in $\Om_\pip$ can be covered with chains of smaller (i.e., sub-Whitney) balls 
	on which $\pip$ is approximately constant and which therefore inherit a Poincar\'e inequality from 
	$(\Om,d,\mu)$. 
	We will also see that these small balls (with respect to the metric $d$) that make up the chain 
	are quasiballs with respect to the metric $d_\pip$, as for example in the proof of Lemma~\ref{lem:small}.
	For the readers' 
	convenience, we provide a complete proof here. Our proof uses an
	application of~\cite[Theorem~4.4]{BS}. 
	We start with the following chain condition.
	
	\begin{definition} \label{def:chain}
		Let $\mathcal{B}$ be a family of balls in a metric measure space $X$ and $\lambda,M\geq 1$ and $a>1$. 
		We say that $A \subset X$ 
		satisfies the chain condition $C(\mathcal B,\lambda,M,a)$ if there exists a distinguished ball $B_0\subset A$ 
		that belongs to $\mathcal B$ such that for every $x\in A$ there exists an infinite sequence of balls 
		$\{B_i\}_{i=0}^\infty\subset \mathcal B$ (called a ``chain") with the following properties:
		\begin{enumerate}
			\item[(i)] $\lambda B_i\subset A$ for $i=0,1,2,\ldots$ and $B_i$ is centered at $x$ for all sufficiently large $i$;
			\item[(ii)] for $i\geq 0$, the radius $r_i$ of $B_i$ satisfies 
			$M^{-1}(\diam A)\,a^{-i}\leq r_i\leq M (\diam A)\,a^{-i}$; and,
			\item[(iii)] the intersection $B_i\cap B_{i+1}$ contains a ball $B_i'$ such that $B_i\cup B_{i+1}\subset MB_i'$ for all $i\geq 0$.
		\end{enumerate}
	\end{definition}
	
	In this section, we consider sub-Whitney balls corresponding to the constant $2\lambda M$, see
	Definition~\ref{loc-doubl}.

	\begin{remark}\label{rem:BS-chain}
		Given a uniform domain $\Om$, we set the collection $\mathcal{B}$ to consist of balls centered at points in $\Om$ and
		with radii such that the ball is also contained in $\Om$. Then \cite[Lemma~4.3]{BS} tells us that when $\Om$ is a uniform
		domain, there are constants $a$, $\lambda$ and $M$ such that for each 
		$x\in\Om$ and $r>0$, the set $B(x,r)\cap\Om$ satisfies the chain condition $C(\mathcal B, \lambda, M, a)$
		with $x$ the center of the distinguished ball $B_0$.
		The above chain condition is equivalent to the chain condition given in~\cite[Lemma~4.3]{BS}. While the chain of balls in~\cite{BS}
		are not of strictly dyadically decreasing radii, there are at most $L$ balls of the same radius, with $L$ depending
		solely on the uniformity constant of $\Om$ and the choice of $\lambda$; 
		hence, for sufficiently large $M$ in our definition above, the chains constructed in
		\cite[Lemma~4.3]{BS} satisfy the conditions in Definition~\ref{def:chain}. Therefore we can exploit 
		\cite[Theorem~4.4]{BS}.
	\end{remark}

	\begin{theorem}\label{thm:PI-no-infty}
		The uniform domains $\Om_\pip\setminus\{\infty\}$ and $\Om_\pip$, as well as $\overline{\Om_\pip}^\pip$,
		equipped with the metric $d_\pip$ and the measure $\mu_\pip$,
		all satisfy $p$-Poincar\'e inequality if $(\Om, d,\mu)$ satisfies a sub-Whitney $p$-Poincar\'e inequality.
	\end{theorem}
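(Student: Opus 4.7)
The strategy is the classical Boman chain argument, reducing a global $p$-Poincar\'e inequality to a collection of local Poincar\'e inequalities on sub-Whitney balls and then applying \cite[Theorem~4.4]{BS} to the uniform domain $\Om_\pip\setminus\{\infty\}$. Since the point $\infty$ carries no $\mu_\pip$-mass, and neither does $\partial\Om$ (on which $\mu_\pip$ is not supported), the Poincar\'e inequality will transfer without further effort between $\Om_\pip\setminus\{\infty\}$, $\Om_\pip$, and $\overline{\Om_\pip}^\pip$. Thus the content of the theorem rests on proving a uniform local $p$-Poincar\'e inequality on a sufficiently rich family $\mathcal{B}$ of small balls and then chaining these together.

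My candidate for $\mathcal{B}$ is the family of $d_\pip$-balls $B_{d_\pip}(x,r)$ with $x\in\Om_m$ for some $m$, and with $r$ small enough that $\infty\notin B_{d_\pip}(x,C_*r)$, where $C_*$ is the constant from Lemma~\ref{lem:small}. For such a ball, Lemma~\ref{lem:small} furnishes two crucial comparabilities: $B_{d_\pip}(x,r)$ is sandwiched between two concentric $d$-balls of radius $\approx \pip(2^m)^{-1}r$, and $\pip\circ d_\Om$ is essentially the constant $\pip(2^m)$ throughout $B_{d_\pip}(x,2r)$. Consequently $d\mu_\pip\approx\pip(2^m)^p\,d\mu$ on $B_{d_\pip}(x,2r)$, and the pointwise identity $g_{u,\pip}^{p}\,d\mu_\pip=g_{u,d}^{p}\,d\mu$ holds, since arclength scales by $\pip\circ d_\Om$ under the conformal change and the minimal weak upper gradient then scales by $(\pip\circ d_\Om)^{-1}$. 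Because $d_\Om(x)\ge 2^{m-1}$, the corresponding $d$-ball is sub-Whitney in $(\Om,d)$, so the hypothesized sub-Whitney $p$-Poincar\'e inequality of $(\Om,d,\mu)$ applies there; tracking the above comparabilities through the two sides of that inequality converts it directly into the desired local $p$-Poincar\'e inequality for $B_{d_\pip}(x,r)$, with constants independent of $x$, $r$, and $m$.

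With the local inequality in hand, I invoke the uniformity of $\Om_\pip\setminus\{\infty\}$ under $d_\pip$ (Theorem~\ref{thm:uniform}). By Remark~\ref{rem:BS-chain}, following \cite[Lemma~4.3]{BS}, every $d_\pip$-ball $B$ has the property that $B\cap(\Om_\pip\setminus\{\infty\})$ admits a Boman chain drawn from $\mathcal{B}$, once the sub-Whitney dilatation constant is tuned so that sub-Whitney balls in $\Om_\pip\setminus\{\infty\}$ automatically satisfy the ``far from $\infty$'' condition from Step~1. Then \cite[Theorem~4.4]{BS} delivers a $p$-Poincar\'e inequality on $B\cap(\Om_\pip\setminus\{\infty\})$ with uniform constants; this is equivalent to the same inequality on $B\cap\Om_\pip$ and on $B\cap\overline{\Om_\pip}^\pip$, since $\{\infty\}$ and $\partial\Om$ are $\mu_\pip$-null and do not affect either the averages or the integrals of upper gradients.

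The step requiring the most care is the alignment of the two notions of ``sub-Whitney.'' The chain condition from \cite{BS} is phrased relative to the uniform domain $\Om_\pip\setminus\{\infty\}$, whose boundary is $\partial\Om\cup\{\infty\}$, whereas the hypothesis of Step~1 is phrased in terms of $d_\pip(x,\infty)$. For $x\in\Om_m$ with $m$ large, Lemma~\ref{lem:dist-to-infty} gives $d_\pip(x,\infty)\approx 2^m\pip(2^m)$, and a band-by-band estimate using condition~(5) of Definition~\ref{def:phi} yields $d_\pip(x,\partial\Om)\approx 2^m\pip(2^m)$ as well, so the two scales are comparable. For $x$ near $\partial\Om$, on the other hand, $\pip(d_\Om(x))=1$ and $d$ and $d_\pip$ are locally bi-Lipschitz equivalent, while $d_\pip(x,\infty)$ is bounded below by a positive constant depending only on the structural data; hence smallness of $r$ relative to $d_\pip(x,\partial\Om)$ automatically implies the Step~1 condition. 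A single sufficiently large sub-Whitney dilatation constant therefore suffices, and the two requirements are reconciled, after which the invocation of \cite[Theorem~4.4]{BS} is essentially mechanical.
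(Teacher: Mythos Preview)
Your approach is essentially the same as the paper's: establish a local $p$-Poincar\'e inequality on sub-Whitney $d_\pip$-balls by transferring from the sub-Whitney $p$-Poincar\'e inequality in $(\Om,d,\mu)$ via the quasiball comparison and weight comparability (the paper uses Lemma~\ref{lem:nearby-points} directly rather than Lemma~\ref{lem:small}, but the content is the same), then chain using \cite[Theorem~4.4]{BS} applied to the uniform domain $\Om_\pip\setminus\{\infty\}$.

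Two points deserve correction. First, your claim that $d_\pip(x,\partial\Om)\approx 2^m\pip(2^m)$ for $x\in\Om_m$ with $m$ large is false: since the sequence $n\mapsto 2^n\pip(2^n)$ is decreasing by condition~(4) of Definition~\ref{def:phi}, the band-by-band estimate actually shows that $d_\pip(x,\partial\Om)$ is bounded below by a positive constant independent of $m$, whereas $2^m\pip(2^m)\to 0$. Fortunately the claim is unnecessary: since $\partial(\Om_\pip\setminus\{\infty\})=\partial\Om\cup\{\infty\}$, any sub-Whitney ball in this domain automatically has radius at most a fixed fraction of $d_\pip(x,\infty)$, which is precisely what is needed for the ``far from $\infty$'' condition once the dilatation constant is chosen large enough; this is exactly how the paper calibrates its parameter $\sigma$. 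Second, the final transfer to $\Om_\pip$ and $\overline{\Om_\pip}^\pip$ is not purely a measure-zero matter, since upper gradients must be checked along curves that pass through the newly added points, and such curve families need not have zero $p$-modulus (cf.~Proposition~\ref{thm:bdry-mod}). The paper handles this step by citing \cite[Proposition~7.1]{AS}.
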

	
	\begin{proof}
		We wish to choose $\sigma>1$ in applying~\cite[Lemma~4.3]{BS} such that when
		$x\in\Om_m$ and $r>0$ such that $\infty\not\in B_{d_\pip}(x,4\sigma r)$, then $\infty\not\in B_{d_\pip}(x,5C_A^2\lambda r)$ and 
		$4C_A^2\lambda r<c\, 2^m\pip(2^m)$, where $c, C_A$ are the constants from Lemma~\ref{lem:nearby-points}. We do this as follows.
		
		Clearly the first condition is satisfied if $B_{d_\pip}(x,5C_A^2\lambda r) \subset B_{d_\pip}(x,4\sigma r)$, that is, 
		whenever $\sigma\geq 5C_A^2\lambda/4$.  To make sure that the second condition is also satisfied, we
		only consider the radii $r$ for which $\infty\not\in B_{d_\pip}(x,4\sigma r)$, that is, 
		$d_\pip(x,\infty)\ge 4\sigma r$. 
		Combining this with Lemma~\ref{lem:dist-to-infty}, we obtain
		\[
		4\sigma r \leq d_\pip(x,\infty)\leq \kappa\, 2^m\pip(2^m).
		\]
		If $\sigma\ge C_A^2\lambda \kappa/c$, the above inequality implies that the second condition 
		$4C_A^2\lambda r<c\, 2^m\pip(2^m)$ is satisfied. 
		Henceforth, we fix a choice of
		\[
		\sigma>\max\bigg\lbrace \frac{5C_A^2\lambda}{4},\, \frac{C_A^2\lambda\kappa}{c}\bigg\rbrace.
		\]
		
		For each ball $B_{d_\pip}$ in $\Om_\pip\setminus\{\infty\}$ (that is, it is the intersection of a ball in
		$\overline{\Om_\pip}$ with $\Om_\pip\setminus\{\infty\}$), we can appeal to ~\cite[Lemma~4.3]{BS} to construct
		chains of balls $B_i=B_{d_\pip}(x_i,r_i)$, $i\in\N$, corresponding to the above choice of $\sigma$. 
		From the above discussion
		and by Lemma~\ref{lem:nearby-points}, we have that 
		\[
		B_i\subset B_d(x_i,\tfrac{C_A}{\pip(2^m)}r_i)\subset C_A^2B_i\subset\Om_\pip\setminus\{\infty\}. 
		\]
		As $C_A^2B_i\subset B_{d_\pip}$ and $\infty\not\in B_{d_\pip}$, the last inclusion above holds.
		Moreover, the weight $\pip(d_{\Omega}(y))^p$ is approximately constant on 
		$C_A^2B_i$, with the comparison constant independent of the ball. Therefore,
		\[
		u_{B_d\big(x,\tfrac{C_A}{\pip(2^m)}r\big)}:=\vint_{B_d\big(x,\tfrac{C_A}{\pip(2^m)}r\big)}u\, d\mu_\pip
		\approx \vint_{B_d\big(x,\tfrac{C_A}{\pip(2^m)}r\big)}u\, d\mu=:c_u.
		\]
		In what follows, by $g_{u,d}$ we mean the minimal $p$-weak upper gradient of $u$ with respect to the metric
		$d$, while $g_{u,\pip}$ denotes the minimal $p$-weak upper gradient with respect to $d_\pip$.
		Hence, by the sub-Whitney Poincar\'e inequality for $(\Omega,d,\mu)$, we have
		\[
		\begin{split}
			\vint_{B_i} \vert u-c_u\vert d\mu_\pip    
			&\lesssim \vint_{B_d(x,\tfrac{C_A}{\pip(2^m)}r)} \vert u-c_u\vert d\mu\\
			& \lesssim   \text{diam}_d(B_d(x,\tfrac{C_A}{\pip(2^m)}r))\left(\vint_{B_d(x,\tfrac{C_A}{\pip(2^m)}r)} g_{u,d}^p d\mu\right)^{1/p}\\
			&\lesssim r_i\left(\vint_{ C_A^2B_i} g_{u,\pip}^p d\mu_\pip\right)^{1/p}.
		\end{split}
		\]
		Here we use the estimate
		\[
		\text{diam}_d(B_d(x,\tfrac{C_A}{\pip(2^m)}r))
		\approx \pip(2^m)^{-1} \text{diam}_{\pip}(B_d(x,\tfrac{C_A}{\pip(2^m)}r))\approx \pip(2^m)^{-1}\, r_i
		\]
		from Lemma~\ref{lem:nearby-points}, together with 
		\[
		g_{u,d}\approx\pip(2^m)g_{u,\pip}
		\]
		to justify the last step. 
		It follows that $(\Om_\pip\setminus\{\infty\}, d_\pip,\mu_\pip)$ satisfies a sub-Whitney $p$-Poincar\'e inequality, i.e.,
		with respect to the balls $B_i$. 
		See Definition~\ref{def:poin} above for these concepts.
		Recall that $\Om_\pip\setminus\{\infty\}$ is a uniform domain, see Theorem~\ref{thm:uniform} above.
		Now we invoke~\cite[Theorem~4.4]{BS} to conclude that 
		$(\Om_\pip\setminus\{\infty\}, d_\pip,\mu_\pip)$ satisfies a $p$-Poincar\'e inequality
		with respect to all balls. While the statement of~\cite[Theorem~4.4]{BS} requires that $p$-Poincar\'e inequality
		be valid with respect to all balls in an ambient space containing the uniform domain, \emph{the proof there only needed
			the validity of $p$-Poincar\'e inequality with respect to the balls in the chain.}
		
		Now the remaining claims follow from~\cite[Proposition~7.1]{AS}, for we have that 
		$\Om_\pip\setminus\{\infty\}\subset\Om_\pip\subset\overline{\Om_\pip}^\pip
		=\overline{\Om_\pip\setminus\{\infty\}}^\pip$.
	\end{proof}

	\section{Transformation of potentials}\label{Sect:Dirichlet}
	
	In this section, we return to the original motivation for the problems studied in the prior sections of this paper.
	We assume that $\Om$ is a unbounded locally compact, non-complete uniform domain with 
	bounded boundary, equipped with a doubling measure $\mu$ that supports a sub-Whitney $p$-Poincar\'e inequality 
	for some fixed $1\leq p <\infty$.
	Here, of course, we extend the measure $\mu_\pip$ to $\partial(\Om_\pip\setminus\{\infty\})$ by zero.
	
	Recall that as a set, $\Om=\Om_\pip\setminus\{\infty\}$. From the results in the prior sections,
	we know that $(\Om_\pip\setminus\{\infty\}, d_\pip,\mu_\pip)$  is doubling and supports a $p$-Poincar\'e 
	inequality; hence by~\cite{AS} we know that 
	\begin{equation}\label{eq:N=N}
		N^{1,p}(\Om_\pip\setminus\{\infty\},d_\pip,\mu_\pip)=N^{1,p}(\Om_\pip,d_\pip,\mu_\pip)=N^{1,p}(\overline{\Om_\pip}^\pip,d_\pip,\mu_\pip).
	\end{equation}
	With $g_{u,d}$ the minimal $p$-weak upper gradient of a function $u\in N^{1,p}(\Om,d,\mu)$ with respect to
	the original metric $d$, and
	$g_{u,\pip}$ the minimal $p$-weak upper gradient of $u$ with respect to the metric $d_\pip$, we have the
	relationship
	\begin{equation}\label{eq:chain-rule}
		g_{u,\pip}=\frac{1}{\pip\circ d_\Om}\, g_{u,d}.
	\end{equation}
	It follows that 
	\begin{equation}\label{eq:chain-rule-integral}
		\int_{\Om_\pip}g_{u,\pip}^p\, d\mu_\pip=\int_\Om g_{u,d}^p\, d\mu.
	\end{equation}
	As a consequence of~\eqref{eq:chain-rule}, we have the following proposition (see Definition~\ref{def:harm}
	for the definition of $p$-harmonicity).
	
	\begin{proposition}\label{prop:harm-harm}
		A function $u$ is $p$-harmonic in the metric measure space $(\Om,d,\mu)$ if and only if it is $p$-harmonic
		in $(\Om_\pip\setminus\{\infty\}, d_\pip,\mu_\pip)$.
	\end{proposition}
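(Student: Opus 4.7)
The plan is to deduce the equivalence directly from the pointwise chain rule \eqref{eq:chain-rule} and the integral identity \eqref{eq:chain-rule-integral}. Since $\Om$ and $\Om_\pip\setminus\{\infty\}$ are the same set, the transformation only affects the metric, the measure, and hence the minimal weak upper gradient. The strategy is to show that the Dirichlet class, the admissible class of competitors in the $p$-minimization inequality, and the continuity requirement are all preserved.

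First, I would observe that a set $V\subset\Om$ has compact closure in $\Om$ with respect to $d$ if and only if it has compact closure in $\Om_\pip\setminus\{\infty\}$ with respect to $d_\pip$. Indeed, in either case $V$ must stay away from $\partial\Om$ and from $\infty$, so it lies in a finite union $\bigcup_{n=m}^{N}\Om_n$. On such a band $\pip\circ d_\Om$ is bounded above and below by positive constants, so by Lemma~\ref{lem:nearby-points} the metrics $d$ and $d_\pip$ are bi-Lipschitz equivalent on $V$, and the measures $\mu$ and $\mu_\pip$ are comparable. In particular, the spaces $D^{1,p}(\Om,d,\mu)$ and $D^{1,p}(\Om,d_\pip,\mu_\pip)$ coincide as sets (via \eqref{eq:chain-rule-integral}), and continuity of $u$ with respect to $d$ is equivalent to continuity with respect to $d_\pip$.

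Next, fix such a $V$ and any $v\in D^{1,p}$ with $v=u$ off $V$. By the pointwise chain rule \eqref{eq:chain-rule} applied to both $u$ and $v$, together with $d\mu_\pip=(\pip\circ d_\Om)^p\,d\mu$, the $\pip$-factors cancel to give
\[
\int_V g_{u,\pip}^p\,d\mu_\pip=\int_V g_{u,d}^p\,d\mu,\qquad
\int_V g_{v,\pip}^p\,d\mu_\pip=\int_V g_{v,d}^p\,d\mu.
\]
Hence the $p$-minimization inequality $\int_V g_{u,d}^p\,d\mu\le\int_V g_{v,d}^p\,d\mu$ is identical to the corresponding inequality for $(d_\pip,\mu_\pip)$, and the admissible classes of pairs $(V,v)$ coincide by the preceding paragraph. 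This shows that $u$ is a $p$-minimizer in one sense if and only if it is in the other, and combined with the equivalence of continuity on $\Om$, the result follows.

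The only step requiring some care is the pointwise identity \eqref{eq:chain-rule}: one must verify that the conformal rescaling $\pip\circ d_\Om$ of the metric really transforms the minimal $p$-weak upper gradient by the reciprocal factor. This is the one substantive ingredient, but it is a standard chain-rule-type computation valid on each band $\Om_n$ where the dampening factor is bounded away from $0$ and $\infty$; once granted, everything else is bookkeeping.
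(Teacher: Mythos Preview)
Your proposal is correct and follows exactly the approach the paper intends: the paper does not give a detailed proof but simply states that the proposition is a consequence of the chain rule~\eqref{eq:chain-rule}, and your argument spells out precisely the details (equality of compact subsets under the bi-Lipschitz equivalent metrics, coincidence of the Dirichlet classes, and cancellation of the $\pip$-factor in the energy integrals) that make this immediate.
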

	
	Also from~\eqref{eq:chain-rule} we obtain the following proposition.
	
	\begin{proposition}\label{lem:D-to-N}
		Let $u\in D^{1,p}(\Om, d, \mu)$. Then $u\in L^p(\Om,\mu_\pip)$ with 
		\[
		\int_\Om |u-c_u|^p\, (\pip\circ d_\Om)^p\, d\mu\le C\, \int_\Om g_{u,d}^p\, d\mu.
		\]
		Here $c_u=\vint_\Om u\, d\mu_\pip$. In particular, 
		$u\in N^{1,p}(\Om_\pip, d_\pip, \mu_\pip)$.
	\end{proposition}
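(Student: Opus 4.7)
The plan is to transfer the claimed weighted inequality on $(\Om, d, \mu)$ to a Poincar\'e estimate on $(\Om_\pip, d_\pip, \mu_\pip)$, where it becomes a one-ball application of the $p$-Poincar\'e inequality already established. By Theorem~\ref{thm:double} combined with Proposition~\ref{prop:compactness}, the space $\overline{\Om_\pip}^\pip$ is compact and doubling; in particular, its $d_\pip$-diameter $R$ and $\mu_\pip$-measure are both finite. By Theorem~\ref{thm:PI-no-infty} it supports a $p$-Poincar\'e inequality, which the standard Haj\l{}asz--Koskela self-improvement for complete doubling PI spaces upgrades to a $(p,p)$-Poincar\'e inequality.

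First, I would use the chain rule~\eqref{eq:chain-rule} and~\eqref{eq:chain-rule-integral} to note that $g_{u,\pip} \in L^p(\Om_\pip, \mu_\pip)$, so $u \in D^{1,p}(\Om_\pip, d_\pip, \mu_\pip)$. Next, I would fix a single $d_\pip$-ball $B$ centered in $\Om_\pip$ with radius larger than $\lambda R$ (where $\lambda$ is the dilation constant in the Poincar\'e inequality), so that $B \cap \Om_\pip = \lambda B \cap \Om_\pip = \Om_\pip$, and apply the $(p,p)$-Poincar\'e inequality on $B$. Because $\mu_\pip(\{\infty\}) = 0$, the average $u_{B \cap \Om_\pip}$ coincides with $c_u = \vint_\Om u\, d\mu_\pip$, and so
\[
\int_\Om |u - c_u|^p\, d\mu_\pip \le C R^p \int_\Om g_{u,\pip}^p\, d\mu_\pip.
\]
Translating via $d\mu_\pip = (\pip\circ d_\Om)^p\, d\mu$ on the left-hand side and~\eqref{eq:chain-rule-integral} on the right-hand side produces the stated bound, with $C$ depending only on the diameter $R$ and the structural constants of the Poincar\'e inequality on $\overline{\Om_\pip}^\pip$. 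The ``in particular'' clause then drops out of the triangle inequality $\|u\|_{L^p(\Om,\mu_\pip)} \le \|u-c_u\|_{L^p(\Om,\mu_\pip)} + |c_u|\mu_\pip(\Om_\pip)^{1/p}$ and the finiteness of $\mu_\pip(\Om_\pip)$.

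The main obstacle is checking that $u$ is $\mu_\pip$-integrable in the sense needed for the average $c_u$ to be defined, which is needed to apply the one-ball Poincar\'e inequality. This reduces to showing that $u$ is locally integrable on $\Om_\pip$ with respect to $\mu_\pip$, which follows from the hypothesis $u \in D^{1,p}(\Om,d,\mu)$ together with the sub-Whitney $p$-Poincar\'e inequality on $(\Om,d,\mu)$: $u$ is locally integrable on each Whitney region, and these cover $\Om_\pip \setminus \{\infty\}$, which has full $\mu_\pip$-measure in $\Om_\pip$. With this technicality in hand, the self-improvement of the Poincar\'e inequality and the one-ball application proceed directly.
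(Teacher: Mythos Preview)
Your proposal is correct and follows essentially the same route as the paper: transfer $u$ to $D^{1,p}(\Om_\pip\setminus\{\infty\},d_\pip,\mu_\pip)$ via~\eqref{eq:chain-rule}, then apply the $(p,p)$-Poincar\'e inequality (from Theorem~\ref{thm:PI-no-infty} plus the Haj\l{}asz--Koskela self-improvement) on a single ball containing the bounded domain, and finally read the estimate back through~\eqref{eq:chain-rule-integral}. The paper works on $\Om_\pip\setminus\{\infty\}$ rather than $\overline{\Om_\pip}^\pip$ and then invokes~\eqref{eq:N=N} at the end, but since $\mu_\pip(\{\infty\})=\mu_\pip(\partial\Om)=0$ this is immaterial.

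One small point: your resolution of the ``main obstacle'' is weaker than what you actually need. Local integrability on each sub-Whitney ball does not by itself guarantee that the global average $c_u=\vint_\Om u\,d\mu_\pip$ is finite; one must chain the Whitney pieces together, which is precisely the content of Remark~\ref{D=P} (bounded domain $+$ uniformly local Poincar\'e $\Rightarrow D^{1,p}=N^{1,p}$). The paper simply cites that remark to get $u\in N^{1,p}(\Om_\pip\setminus\{\infty\},d_\pip,\mu_\pip)$ up front, after which the one-ball Poincar\'e applies without further justification. Replacing your last paragraph with an appeal to Remark~\ref{D=P} closes the argument cleanly.
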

	
	As $\Om$ is unbounded with respect to the metric $d$, we cannot conclude that $u\in L^p(\Om,\mu)$.
	Note that with the choice of $\pip(t)=\min\{1,t^{-\beta}\}$ for sufficiently large fixed $\beta>1$, the above lemma is an analog
	of a Hardy-Sobolev inequality with distance to $\partial_\pip\overline{\Om}^d=\{\infty\}$ 
	playing the role of distance to the boundary. Readers interested in the topic of Hardy-Sobolev spaces
	are referred to~\cite[Section~1.3.3]{Maz}, \cite{KL} (for the Euclidean setting), and~\cite[Corollary~6.1]{BMS}
	(for a metric setting) and the references therein.
	
	\begin{proof}
		Since $u\in D^{1,p}(\Om,d,\mu)$, we know that the minimal $p$-weak upper gradient $g_{u,d}$ of $u$ in 
		$\Om$, with respect to the 
		metric $d$, is in $L^p(\Om,\mu)$. Then by~\eqref{eq:chain-rule} above, we know that 
		$u\in D^{1,p}(\Om_\pip\setminus\{\infty\},d_\pip,\mu_\pip)$, recalling that $\Om=\Om_\pip\setminus\{\infty\}$. Note that then $u\in N^{1,p}(\Om_\pip\setminus\{\infty\},d_\pip,\mu_\pip)$ by
		Remark~\ref{D=P}.
		From Theorem~\ref{thm:PI-no-infty} and 
		Theorem~\ref{thm:double}, 
		we know that $\Om_\pip\setminus\{\infty\}$ is bounded with respect to the metric $d_\pip$, and supports a $p$-Poincar\'e inequality with respect
		to $d_\pip$ and $\mu_\pip$. 
		Thus, for any $x\in\Om_\pip\setminus\{\infty\}$ and for sufficiently large $R>0$, we have $\Om_\pip\setminus\{\infty\}=B_{d_\pip}(x,R)$, and moreover, 
		$(\Om_\pip\setminus\{\infty\},d_\pip,\mu_\pip)$ also supports the following $(p,p)$-Poincar\'e inequality, see~\cite[Theorem~5.1]{HaK},
		\cite[Theorem~9.1.2]{HKST}:
		for $u\in N^{1,p}(\Om_\pip\setminus\{\infty\},d_\pip,\mu_\pip)$, we have by~\eqref{eq:chain-rule-integral} that
		\[
		\int_{\Om_\pip\setminus\{\infty\}} |u-c_u|^p\, d\mu_\pip\le C\, \int_{\Om_\pip\setminus\{\infty\}} g_{u,\pip}^p\, d\mu_\pip= C\, \int_\Om g_{u,d}^p\, d\mu.
		\]
		Note that $C$ also depends on $R$. Now the desired conclusion follows from noting that, by definition, $d\mu_\pip=(\pip\circ d_\Om)^p\, d\mu$ and by
		using~\eqref{eq:N=N}.
	\end{proof}

	As $\mu$ is doubling, there exists some $Q^-_\mu>0$, called the {\em lower mass bound exponent} of $\mu$, such that
	\[
	\frac{ \mu(B_d(x,r)) }{ \mu(B_d(y,R)) }\gtrsim \left(\frac{r}{R}\right)^{Q^-_\mu}
	\]
	for all $x,y\in\Omega$ with $x\in B_d(y,R)$ and $0<r\leq R<\infty$, where the implied constant depends only on $C_\mu$, the doubling constant of $\mu$, see for instance~\cite[(4.16)]{Hei} or \cite[Lemma~8.1.13]{HKST}.  
	Moreover, as $\Om$ is connected, there exists $Q^+_\mu>0$, called the {\em upper mass bound exponent} of $\mu$, such that
	\[
	\frac{ \mu(B_d(x,r)) }{ \mu(B_d(y,R)) }\lesssim \left(\frac{r}{R}\right)^{Q^+_\mu}
	\]
	for all $x,y\in\Omega$ with $x\in B_d(y,R)$ and $0<r\leq R<\infty$, see~\cite[Corollary~3.8]{BB}. Note that $Q_\mu^+\le Q_\mu^-$.

	\begin{lemma}\label{lem:lower-mass}
		Let $1\le p<\infty$, and
		$\pip(t)=\min\{1,t^{-\beta}\}$ for $t>0$ and $\beta>1$ such that $\beta p>Q_\mu^-$.
		For all $0<r\leq R<r_0$, where $r_0$ is as in Section~\ref{sec:doubling}, 
		$$
		\left(\frac{r}{R}\right)^{Q^+_\beta} \gtrsim
		\frac{ \mu_\pip(B_{d_\pip}(\infty,r)) }{ \mu_\pip(B_{d_\pip}(\infty,R)) }\gtrsim \left(\frac{r}{R}\right)^{Q^-_\beta},
		$$  
		where $Q^-_\beta=\frac{\beta p - Q^+_\mu}{\beta-1}$ and $Q^+_\beta=\frac{\beta p - Q^-_\mu}{\beta-1}$. 
		Moreover, the function $\pip$ satisfies the conditions of Definition~\ref{def:phi}.
	\end{lemma}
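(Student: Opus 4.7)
\emph{Plan.} My approach is to first verify that $\pip(t)=\min\{1,t^{-\beta}\}$ satisfies Definition~\ref{def:phi}, so that the results of Sections~\ref{sec:doubling}--\ref{Sect:PI} (in particular Lemmas~\ref{lem:dist-to-infty} and~\ref{lem:balls-at-infty}) become available, and then exploit the explicit power-law form of $\pip$ to reduce the comparison of $\mu_\pip$-measures of balls at $\infty$ to a comparison of the layer measures $\mu(\Om_m)$ and $\mu(\Om_k)$, where $m$ and $k$ are the integer indices determined by $r$ and $R$ via Lemma~\ref{lem:dist-to-infty}. The mass-bound exponents $Q^\pm_\mu$ then sandwich the layer-measure ratio, and a conversion from $2^{m-k}$ to $r/R$ produces the desired exponents $Q^\pm_\beta$.

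Conditions (1)--(4) of Definition~\ref{def:phi} follow by direct computation on the branch $t\ge 1$, where $\pip(t)/\pip(2t)=2^\beta$, giving $C_\pip=\tau=2^\beta>2$; condition (5) follows from (4) as already noted in the definition. For condition~(6), I would use Lemma~\ref{lem:comparable-layers} together with the boundedness of $\partial\Om$: a representative $y_j\in\Om_j$ with $d_\Om(y_j)\approx 2^j$ lies in $B_d(y_n,C\cdot 2^n)$ whenever $n\ge m$, by the triangle inequality and $L:=\diam_d(\partial\Om)<\infty$, so the lower mass-bound exponent of $\mu$ yields $\mu(\Om_n)\lesssim (2^n/2^m)^{Q^-_\mu}\mu(\Om_m)$. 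Then
\[
\sum_{n=m}^\infty 2^{-\beta p n}\mu(\Om_n)\lesssim \mu(\Om_m)\,2^{-Q^-_\mu m}\sum_{n=m}^\infty 2^{n(Q^-_\mu-\beta p)}\approx 2^{-\beta p m}\mu(\Om_m),
\]
and convergence of this geometric series is precisely the hypothesis $\beta p>Q^-_\mu$; the reverse bound is trivial.

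With Definition~\ref{def:phi} verified, Lemma~\ref{lem:balls-at-infty} combined with condition~(6) gives $\mu_\pip(B_{d_\pip}(\infty,r))\approx 2^{-\beta p m}\mu(\Om_m)$, and similarly for $R$ with index $k$. Since $\pip(2^m)=2^{-\beta m}$ in the relevant range, Lemma~\ref{lem:dist-to-infty} specializes to $r\approx 2^{(1-\beta)m}$ and $R\approx 2^{(1-\beta)k}$; the assumption $r\le R$ together with $1-\beta<0$ forces $m\ge k$ up to a bounded additive shift that absorbs into implicit constants. Setting $j=m-k\ge 0$, one has $R/r\approx 2^{(\beta-1)j}$, and the ratio of measures becomes
\[
\frac{\mu_\pip(B_{d_\pip}(\infty,r))}{\mu_\pip(B_{d_\pip}(\infty,R))}\approx 2^{-\beta p j}\,\frac{\mu(\Om_m)}{\mu(\Om_k)}.
\]
Applying the upper and lower mass-bound exponents of $\mu$---once more using $\diam_d(\partial\Om)<\infty$ to place $y_m,y_k$ in a common reference ball---sandwiches $\mu(\Om_m)/\mu(\Om_k)$ between $2^{jQ^+_\mu}$ and $2^{jQ^-_\mu}$. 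Rewriting $2^{-j(\beta p-Q^\pm_\mu)}=(r/R)^{(\beta p-Q^\pm_\mu)/(\beta-1)}$ then produces exactly the upper bound $(r/R)^{Q^+_\beta}$ and the lower bound $(r/R)^{Q^-_\beta}$. The main subtlety in this argument is the layer comparison $\mu(\Om_m)\approx 2^{jQ_\mu^\pm}\mu(\Om_k)$: the representatives $y_m,y_k$ of Lemma~\ref{lem:comparable-layers} are not automatically close to each other, and the boundedness of $\partial\Om$ is invoked precisely to place them in a common ball so that the mass-bound exponents transport cleanly from ball-measures to the layer measures.
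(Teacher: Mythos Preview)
Your proposal is correct and follows essentially the same route as the paper: both arguments verify condition~(6) of Definition~\ref{def:phi} via Lemma~\ref{lem:comparable-layers}, the boundedness of $\partial\Om$, and the geometric series controlled by $\beta p>Q_\mu^-$; and both obtain the mass bounds by combining Lemma~\ref{lem:balls-at-infty} with condition~(6), Lemma~\ref{lem:dist-to-infty} for the conversion $r\approx 2^{(1-\beta)m}$, and the upper/lower mass-bound exponents applied to the layer ratio $\mu(\Om_m)/\mu(\Om_k)$. The only difference is order of exposition---you verify Definition~\ref{def:phi} first so that Lemmas~\ref{lem:dist-to-infty} and~\ref{lem:balls-at-infty} are legitimately available, whereas the paper derives the mass bounds first (invoking condition~(6) as an assumption) and checks condition~(6) at the end; your ordering is arguably the more transparent one.
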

	
	\begin{proof}
		Fix $0<r<R<r_0$. Take non-negative integers $m_r$ and $m_R$ such that $\Omega_{m_r}$ contains a 
		point $x_r$ satisfying $d_\pip(x_r,\infty)=r$ and $\Omega_{m_R}$ contains a point $x_R$ satisfying 
		$d_\pip(x_R,\infty)=R$. By Lemma~\ref{lem:balls-at-infty} and the assumption \eqref{additionalmupip},
		\begin{align*}
			\frac{\mu_\pip(B_{d_\pip}(\infty,r))}{\mu_\pip(B_{d_\pip}(\infty,R))}
			\approx \frac{ \sum_{n=m_r}^{\infty}\pip(2^n)^p\mu(\Om_n) }{\sum_{n=m_R}^{\infty}\pip(2^n)^p\mu(\Om_n)}	
			&\approx
			\left(\frac{\pip(2^{m_r})}{\pip(2^{m_R})}\right)^p\frac{\mu(\Om_{m_r})}{\mu(\Om_{m_R})}\\
			&= \left(\frac{2^{m_R\beta}}{2^{m_r\beta}}\right)^p\frac{\mu(\Om_{m_r})}{\mu(\Om_{m_R})}. 
		\end{align*}
		
		From Lemma~\ref{lem:comparable-layers}, there exist $y_r\in\Om_{m_r}$ with $d_\Om(y_r)=2^{m_r}$ and $y_R\in\Om_{m_R}$ with $d_\Om(y_R)=2^{m_R}$, so that
		\[
		\frac{\mu(\Om_{m_r})}{\mu(\Om_{m_R})}\approx \frac{ \mu(B_d(y_r,2^{m_r})) }{\mu(B_d(y_R,2^{m_R}))}.
		\]
		As $\partial\Om$ is bounded with respect to $d$, we can consider an integer upper bound $K$ for $\diam_d(\partial\Om)$. The ball $B_d(y_r,2K2^{m_r})$ engulfs $\partial\Om$ and therefore also $B_d(y_R,2^{m_R})$ since from the above estimates
		we can conclude that $2^{m_{R}}\lesssim 2^{m_{r}}$. Hence, by the doubling property of $\mu$,
		\[
		\frac{ \mu(B_d(y_r,2^{m_r})) }{\mu(B_d(y_R,2^{m_R}))}\approx\frac{ \mu(B_d(y_r,2K2^{m_r})) }{\mu(B_d(y_R,2^{m_R}))}.
		\]
		Applying the upper and lower mass bound estimates for $\mu$ to this right-hand quantity, we arrive at
		\[
		\left(\frac{2^{m_r}}{2^{m_R}}\right)^{Q^+_\mu}\lesssim \frac{ \mu(B_d(y_r,2^{m_r})) }{\mu(B_d(y_R,2^{m_R}))}\lesssim \left(\frac{2^{m_r}}{2^{m_R}}\right)^{Q^-_\mu}.
		\]
		By Lemma~\ref{lem:dist-to-infty}, we have that $r=d_\pip(x_r,\infty)\approx 2^{m_r}\pip(2^{m_r})=2^{m_r(1-\beta)}$ and similarly for $R$. Therefore, 
		\[
		\frac{\mu_\pip(B_{d_\pip}(\infty,r))}{\mu_\pip(B_{d_\pip}(\infty,R))}
		\gtrsim\left(\frac{2^{m_R}}{2^{m_r}}\right)^{\beta p-Q^+_\mu} \approx \left( \frac{R^{\frac{1}{1-\beta}}}{r^{\frac{1}{1-\beta}}} \right)^{\beta p-Q^+_\mu}=\left( \frac{r}{R} \right)^{\frac{\beta p-Q^+_\mu}{\beta-1}},
		\]
		with the opposite relationships holding with $Q_\mu^+$ replaced by $Q_\mu^-$.
		
		The first five conditions of Definition~\ref{def:phi} are clear for this choice of $\pip$. Condition~(6) of that definition
		follows from Lemma~\ref{lem:comparable-layers}. Indeed, in this case, for $n,m\in\N$ with $n>m>n_0$, we have the existence
		of points $y_n\in\Om_n$ and $y_m\in\Om_m$ such that
		$\mu(\Om_m)\approx\mu(B_d(y_m,2^m))$ and $\mu(\Om_n)\approx\mu(B_d(y_n,2^n))$. Moreover, $B_d(y_m,2^m)$ intersects
		$B_d(y_n,2^n)$, and so we have that
		\[
		\frac{\mu(\Om_n)}{\mu(\Om_m)}\lesssim\frac{\mu(B_d(y_n,2^n))}{\mu(B_d(y_m,2^m))}\lesssim \left(\frac{2^n}{2^m}\right)^{Q_\mu^-},
		\]
		and so
		\begin{align*}
			\sum_{n=m}^\infty\pip(2^n)^p\, \mu(\Om_n)&\lesssim \sum_{n=m}^\infty 2^{-\beta pn}\, 2^{(n-m)Q_\mu^-}\, \mu(\Om_m)\\
			&=2^{-mQ_\mu^-}\, \mu(\Om_m)\, \sum_{n=m}^\infty 2^{n(Q_\mu^--\beta p)}\\
			&=2^{-\beta p\, m}\, \mu(\Om_m)\, \sum_{j=0}^\infty 2^{j(Q_\mu^--\beta p)}.
		\end{align*}
		Since $\beta p>Q_\mu^-$, the latter series converges, and hence Condition~(6) follows.
	\end{proof}

	\begin{proposition}\label{thm:bdry-mod}
		Let $1\le p<\infty$, 
		$\pip(t)=\min\{1,t^{-\beta}\}$ for $t>0$ and $\beta>1$ such that $\beta p>Q_\mu^-$,
		and let $Q_\beta^+, Q_\beta^-$ be as in Lemma~\ref{lem:lower-mass}.
		Let $\Gamma$ be the collection of all curves in $\Om_\pip$ that are non-constant, compact, and
		rectifiable with respect to the metric $d_\pip$, and ending at $\infty$.
		\begin{enumerate}
			\item If $p> Q_\beta^-$, then $\Mod_p^\pip(\Gamma)>0$.
			\item If $p< Q_\beta^+$ or $1<p=Q_\beta^+$, then $\Mod_p^\pip(\Gamma)=0$.
		\end{enumerate}
	\end{proposition}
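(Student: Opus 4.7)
The strategy is to identify $\Mod_p^\pip(\Gamma)$ with, or bound it from below by, the condenser capacity $\rcapa_p^\pip(\{\infty\}, F; \overline{\Om_\pip}^\pip)$ for a convenient far set $F$, and then exploit the two-sided mass bounds at $\infty$ supplied by Lemma~\ref{lem:lower-mass} together with the global $p$-Poincar\'e inequality from Theorem~\ref{thm:PI-no-infty}. Both directions use the dyadic annular decomposition $A_j := B_{d_\pip}(\infty, R_0 2^{-j+1}) \setminus B_{d_\pip}(\infty, R_0 2^{-j})$ with $\mu_\pip(A_j) \approx \mu_\pip(B_{d_\pip}(\infty, R_0 2^{-j}))$, controlled above by $(R_0 2^{-j})^{Q_\beta^+}$ and below by $(R_0 2^{-j})^{Q_\beta^-}$.

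For (2), I would construct explicit admissible test densities with arbitrarily small $p$-norm. Fix $R_0 \in (0, r_0)$ and for each integer $N \geq 1$ set
\[
\rho_N(x) := \frac{\chi_{B_{d_\pip}(\infty,\, R_0 e^{-N})\setminus\{\infty\}}(x)}{d_\pip(x,\infty)\, \log(R_0/d_\pip(x,\infty))}.
\]
Any $\gamma \in \Gamma$ must traverse all small values of $d_\pip(\cdot,\infty)$, and the substitution $u = \log(R_0/d_\pip(\gamma(s),\infty))$ in the resulting radial integral gives $\int_\gamma \rho_N \, ds = \infty$, so $\rho_N$ is admissible. A dyadic decomposition together with the upper mass bound from Lemma~\ref{lem:lower-mass} yields
\[
\int \rho_N^p \, d\mu_\pip \lesssim \sum_{j \gtrsim N} j^{-p}\, (R_0 2^{-j})^{Q_\beta^+ - p}.
\]
When $p < Q_\beta^+$ this sum decays geometrically like $2^{-N(Q_\beta^+ - p)}$; when $p = Q_\beta^+ > 1$ it is the tail of the convergent series $\sum j^{-p}$. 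In both cases $\|\rho_N\|_p \to 0$, so $\Mod_p^\pip(\Gamma) = 0$.

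For (1), set $F := \overline{\Om_\pip}^\pip \setminus B_{d_\pip}(\infty, R_0)$ and note that $\Mod_p^\pip(\Gamma) \geq \rcapa_p^\pip(\{\infty\}, F; \overline{\Om_\pip}^\pip)$ by Remark~\ref{rem:capmod}, since every curve joining $F$ to $\infty$ lies in $\Gamma$. Let $u$ be admissible for this capacity, chosen to equal $1$ on a neighborhood $B_{d_\pip}(\infty, r)$ of $\infty$ and be nonpositive on a neighborhood of $F$. Writing $B_k := B_{d_\pip}(\infty, R_0 2^{-k})$, the $p$-Poincar\'e inequality combined with the doubling of $\mu_\pip$ gives
\[
|u_{B_k} - u_{B_{k+1}}| \lesssim (R_0 2^{-k}) \left(\vint_{\lambda B_k} g_{u,\pip}^p \, d\mu_\pip\right)^{1/p} \lesssim (R_0 2^{-k})^{1 - Q_\beta^-/p}\, \|g_{u,\pip}\|_p,
\]
where the lower bound $\mu_\pip(\lambda B_k) \gtrsim (R_0 2^{-k})^{Q_\beta^-}$ from Lemma~\ref{lem:lower-mass} is essential in the last step. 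Because $p > Q_\beta^-$, telescoping in $k$ converges, so $|u_{B_{d_\pip}(\infty,r)} - u_{B_0}| \lesssim \|g_{u,\pip}\|_p$; applying the global $p$-Poincar\'e inequality on the bounded space $\overline{\Om_\pip}^\pip$ gives $|u_{B_0} - u_F| \lesssim \|g_{u,\pip}\|_p$ as well. Combining these with $u_{B_{d_\pip}(\infty,r)} = 1$ and $u_F \leq 0$ yields the uniform lower bound $\|g_{u,\pip}\|_p \geq c > 0$, whence $\rcapa_p^\pip(\{\infty\}, F) > 0$ and $\Mod_p^\pip(\Gamma) > 0$.

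The main technical hurdle is the telescopic argument in part~(1): its convergence is exactly the condition $p > Q_\beta^-$, and it crucially uses the \emph{lower} mass bound at $\infty$ from Lemma~\ref{lem:lower-mass}. The analogous construction in part~(2) uses only the upper mass bound, and the restriction $p > 1$ at the critical exponent arises precisely from requiring the series $\sum j^{-p}$ to converge.
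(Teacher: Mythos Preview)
Your proposal is correct and follows essentially the same strategy as the paper: part~(1) via a telescoping Poincar\'e chain at $\infty$ combined with the lower mass bound from Lemma~\ref{lem:lower-mass}, and part~(2) via an explicit radial test density together with the upper mass bound. The only cosmetic differences are that in~(2) you use the log-weighted density $[d_\pip(\cdot,\infty)\log(R_0/d_\pip(\cdot,\infty))]^{-1}$, which is directly admissible for $\Gamma$, in place of the paper's truncated density $2[n_r\,d_\pip(\cdot,\infty)]^{-1}$ on shrinking annuli, and in~(1) you invoke the global Poincar\'e inequality to control $|u_{B_0}-u_F|$ where the paper instead enlarges the starting ball so that $u_{B_1}\le 1/2$.
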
 
	
	Note that $p>Q^-_\beta$ if and only if $p<Q^+_\mu$, and $p<Q_\beta^+$ if and only if $p>Q_\mu^-$.
	
	\begin{proof}
		We first prove (1). Fix $0<r<R<r_0$, and choose a positive integer $k_0$ such that
		$\mu_\pip(B_{d_\pip}(\infty,R))/\mu_\pip(B_{d_\pip}(\infty,2^{k_0-1}R))\le 1/2$.
		We begin by showing that 
		$\Mod_p(\overline{B}_{d_\pip}(\infty,r); \Om_\pip\setminus B_{d_\pip}(x_0,R))\ge CR^{Q_\beta^--p}$
		for some constant $C>0$ that is independent of $r$. Since the $p$-modulus of the condenser is equal to the variational
		$p$-capacity of the condenser with $U=\Om_\pip$
		(see Remark~\ref{rem:capmod}), we will work with the latter and utilize the Poincar\'e inequality. 
		
		To this end,
		consider a function $u\in N^{1,p}(\Om_\pip)$ satisfying $u=1$ on $\overline{B}_{d_\pip}(\infty,r)$ and $u=0$ in 
		$\Om_\pip\setminus B_{d_\pip}(\infty, R)$.  
		Then, $\infty$ is a Lebesgue point of $u$, and so for each positive integer $k$ setting $B_k=B_{d_\pip}(\infty, 2^{k_0-k}R)$, we have that
		\begin{align*}
			1-u_{B_1}=|u(\infty)-u_{B_1}|&\le \sum_{k=1}^\infty |u_{B_k}-u_{B_{k+1}}|\\
			&\lesssim \sum_{k=1}^\infty 2^{k_0-k}R\, \left(\vint_{B_k}g_{u,\pip}^p\, d\mu_\pip\right)^{1/p}\\
			&\lesssim \mu_\pip(B_1)^{-1/p}\sum_{k=1}^\infty (2^{k_0-k}R)^{1-\tfrac{Q_\beta^-}{p}}\left(\int_{B_k}g_{u,\pip}^p\, d\mu_\pip\right)^{1/p},
		\end{align*}
		where we used Lemma~\ref{lem:lower-mass} in the last step. From the choice of $k_0$,
		\[
		u_{B_1}=\frac{1}{\mu_\pip(B_{d_\pip}(\infty,2^{k_0-1}R))}\int_{B_{k_0}}u\, d\mu_\pip
		\le \frac{\mu_\pip(B_{d_\pip}(\infty,R))}{\mu_\pip(B_{d_\pip}(\infty,2^{k_0-1}R))}\le \frac12.
		\]
		
		Since $p>Q_\beta^-$, from the above we obtain
		\[
		\frac12\le 1-u_{B_1}\lesssim \mu_\pip(B_1)^{-1/p}R^{1-\tfrac{Q_\beta^-}{p}}\, \left(\int_{\Om_\pip} \!g_{u,\pip}^p\, d\mu_\pip\right)^{1/p},
		\]
		from where it follows that
		\[
		2^{-p}\,\mu_\pip(B_1)\, R^{Q_\beta^--p}\lesssim \int_{\Om_\pip}\!g_{u,\pip}^p\, d\mu_\pip,
		\]
		and, taking the infimum over all such $u$,
		\[
		\rcapa_p(\overline{B}_{d_\pip}(\infty,r), \Om_\pip\setminus B_{d_\pip}(x_0,R))\gtrsim \mu_\pip(B_1)\,R^{Q_\beta^--p}>0.
		\]
		Letting $r\to 0^+$, from the Choquet property of variational $p$-capacity (see \cite[Theorem~6.7(viii)]{BB}) we obtain
		\[
		\rcapa_p(\{\infty\}, \Om_\pip\setminus B_{d_\pip}(x_0,R))\gtrsim \mu_\pip(B_1)\,R^{Q_\beta^--p}>0.
		\]
		The result then follows from Remark~\ref{rem:capmod}. 
		
		We now move to proving (2). Fix $0<r<\frac{R}{4}<R<r_0$, and let $n_r$ be the unique
		positive integer such that $2^{-n_r}R\le r<2^{1-n_r}R$. Let $\rho$ be the function on $\Om_\pip$ given by
		$\rho(x)=2\,[n_rd_\pip(x,\infty)]^{-1}\chi_{B_{d_\pip}(\infty, R)\setminus B_{d_\pip}(\infty, r)}$. 
		For each $\gamma\in \Gamma(\overline{B}_{d_\pip}(\infty, r), \Om_\pip\setminus B_{d_\pip}(\infty, R),\Om_\pip)$,
		we have $\int_\gamma\rho\, ds_\pip\ge 1$. 
		
		Setting $B_k=B_{d_\pip}(\infty,2^{1-k}R)$ for $k=1,\cdots, n_r$, we see that
		$$
		\int_{\Om_\pip}\!\rho^p\, d\mu_\pip \leq \sum_{k=1}^{n_r}\int_{B_k\setminus B_{k+1}}\!\frac{2^p}{n_r^{p}d_\pip(x,\infty)^p}\,d\mu_\pip(x)\leq\sum_{k=1}^{n_r}\frac{2^{(1+k)p}}{n_r^{p}R^p}\,\mu_\pip(B_k\setminus B_{k+1})
		$$
		since $x\notin B_{k+1}$ implies that $d_\pip(x,\infty)\geq 2^{-k}R$.
		We now estimate $\mu_\pip(B_k\setminus B_{k+1})$. From Lemma~\ref{lem:dist-to-infty}, it follows that $B_k\setminus B_{k+1}\subset\Om_{m_k}$ for some non-negative integer $m_k$; moreover, $2^{-k}R\approx d_\pip(x,\infty)\approx 2^{m_k(1-\beta)}$ for $x\in B_k\setminus B_{k+1}$. Thus, for some $y_k\in \Om_{m_k}$ (see Lemma~\ref{lem:comparable-layers}),
		\begin{align*}
			\mu_\pip(B_k\setminus B_{k+1})&=\int_{B_k\setminus B_{k+1}}\!d_\Om(x)^{-p\beta}\,d\mu\approx 2^{-m_kp\beta}\mu(B_k\setminus B_{k+1})\\&\leq2^{-m_kp\beta}\mu(\Om_{m_k}) \leq 2^{-m_kp\beta}\mu(B_d(y_k,2^{m_k} ))\\ 
			&\lesssim 2^{-m_kp\beta}(2^{m_k})^{Q_\mu^-}
		\end{align*} 
		where the lower-mass bound on $\mu$ was used in the last step.
		
		From here, we have 
		\begin{equation}\label{eq:last-step}
			\int_{\Om_\pip}\!\rho^p\, d\mu_\pip\lesssim \sum_{k=1}^{n_r}\frac{2^{kp}}{n_r^{p}R^p}\,2^{m_k(Q_\mu^--p\beta)} \approx \frac{R^{Q_\beta^+-p}}{n_r^p}\sum_{k=1}^{n_r}(2^k)^{p-Q_\beta^+}. 
		\end{equation}
		Recall that $Q_\beta^+=\frac{p\beta-Q_\mu^-}{\beta-1}$ and $2^{m_k}\approx (2^{-k}R)^{\frac{1}{1-\beta}}$. 
		
		If $1<p=Q_\beta^+$, then the sum on the right-hand side of \eqref{eq:last-step} equals $n_r$, and so
		\[
		\Mod_p^\pip(\overline{B}_{d_\pip}(\infty,r); \Om_\pip\setminus B_{d_\pip}(x_0,R))\lesssim n_r^{1-p}\approx \left[\log\left(\frac{R}{r}\right)\right]^{1-p},
		\]
		the right side of which tends to zero as $r\rightarrow{0^+}$.
		If $p<Q_\beta^+$, then the sum on the right-hand side of \eqref{eq:last-step} is dominated by the convergent 
		series obtained by summing over all positive integers $k$, and so
		\[
		\Mod_p^\pip(\overline{B}_{d_\pip}(\infty,r); \Om_\pip\setminus B_{d_\pip}(x_0,R))\lesssim n_r^{-p}\approx \left[\log\left(\frac{R}{r}\right)\right]^{-p},
		\]
		the right side of which also tends to zero as $r\rightarrow{0^+}$.
		
		In either case, it then follows that $\Mod_p^\pip(\{\infty\};\Om_\pip\setminus B_{d_\pip}(\infty,R))=0$.
		From this it follows that the $p$-modulus 
		(with respect to $d_\pip$ and $\mu_\pip$) of the collection of all non-constant compact rectifiable curves in $\Om_\pip$
		that intersect $\infty$ is zero as well.
	\end{proof}

	\begin{remark}\label{rem:7.8}
		As a consequence of the above results and by~\cite{Bjo}, we have that if $u$ is $p$-harmonic in $(\Om,d,\mu)$
		and $p>Q_\mu^-$ or $p=Q_\mu^->1$, 
		then $u$ is $p$-harmonic in $(\Om_\pip,d_\pip,\mu_\pip)$. From~\cite[Proposition~7.2]{Bjo}
		it also follows that when $p<Q_\mu^+$ and the Sobolev $p$-capacity 
		$\text{Cap}_p(\partial\Om)$ is positive, then there is a bounded $p$-harmonic function 
		in $(\Om_\pip\setminus\{\infty\}, d_\pip,\mu_\pip)$ which has no $p$-harmonic extension to $\Om_\pip$.
	\end{remark}
	
	\section{Connecting potential theory on $\Om$ to functions on $\partial\Om$}\label{Sec:eight}
	
	Let $\nu$ be a Borel regular measure on $\partial\Om=\partial\Om_\pip$ and $\theta\geq 0$. We say that $\nu$ is 
	\emph{$\theta$-codimensional to} $\mu_\pip$ if 
	\[
	\nu(B_{d_\pip}(\zeta,r)\cap\partial\Om)\approx \frac{\mu_\pip(B_{d_\pip}(\zeta,r)\cap\Om_\pip)}{r^\theta}
	\]
	for all $\zeta\in\partial\Om$ and $0<r<2\diam_{d_\pip}(\partial\Om)$.
	Note that as $\mu_\pip$ is doubling on $\Om_\pip\setminus\{\infty\}$, we must necessarily have that 
	$\nu$ is doubling on $\partial\Om$ with respect to the metric $d_\pip$ (and equivalently, with respect to the metric $d$).
	
	\begin{lemma}\label{lem:Haus}
		Assume there exists a Borel regular measure $\nu$ on $\partial\Om$ that is $\theta$-codimensional to 
		$\mu$ for some $\theta\ge 0$. Then, $\mathcal{H}^{-\theta}(A;\overline{\Om})\approx \nu(A)$ for all $A\subset\partial\Om$.
		
		If $\theta>0$, then $\widehat{\mu}_\pip(\partial\Om)=0$ for any doubling measure $\widehat{\mu}_\pip$ on $\overline{\Om}_\pip^\pip$
		such that $\widehat{\mu}_\pip=\mu_\pip$ on $\Om_\pip$ and 
		$\nu(B_{d_\pip}(\zeta,r)\cap\partial\Om)\approx \tfrac{\widehat{\mu}_\pip(B_{d_\pip}(\zeta,r))}{r^\theta}$
		for all $\zeta\in\partial\Om$ and $0<r<2\diam_{d_\pip}(\partial\Om)$.
	\end{lemma}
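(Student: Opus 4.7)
The plan is to establish both claims through standard $5r$-covering arguments exploiting the codimensional relation to swap between $\nu$-mass on $\partial\Om$ and the ratio $\mu(B)/r^\theta$ (or $\widehat\mu_\pip(B)/r^\theta$). I will freely use that $\nu$ is doubling on $\partial\Om$, as the paper already records this from codimensionality together with doubling of $\mu_\pip$. The key bookkeeping trick is that balls in a cover of $A \subset \partial\Om$ need not be centered on $\partial\Om$; for each ball $B(x_i,r_i)$ meeting $\partial\Om$ I would replace the center by some $\zeta_i \in B(x_i,r_i)\cap\partial\Om$, at the cost of doubling the radius and one application of doubling of $\mu$.

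For the upper bound $\mathcal{H}^{-\theta}(A;\overline\Om)\lesssim\nu(A)$ in Part 1, fix $\eps>0$, use outer regularity of $\nu$ to pick an open $V\supset A$ in $\partial\Om$ with $\nu(V)\le\nu(A)+\eps$, and for $\delta>0$ cover $A$ by balls $B(\zeta,r_\zeta)$ with $\zeta\in A$, $r_\zeta<\delta/5$, and $B(\zeta,5r_\zeta)\cap\partial\Om\subset V$. A $5r$-covering extraction yields a disjoint $\{B(\zeta_i,r_i)\}$ whose $5$-fold dilates still cover $A$, and the codimensional relation together with doubling of $\nu$ gives
\[
\mathcal{H}^{-\theta}_{\delta}(A;\overline\Om) \le \sum_i \frac{\mu(B(\zeta_i,5r_i)\cap\overline\Om)}{(5r_i)^\theta} \lesssim \sum_i \nu(B(\zeta_i,r_i)\cap\partial\Om) \le \nu(V) \le \nu(A)+\eps.
\]
Sending $\delta,\eps\to 0^+$ gives one inequality. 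For the reverse inequality $\nu(A)\lesssim\mathcal{H}^{-\theta}(A;\overline\Om)$, start with any ball cover $\{B(x_i,r_i)\}$ of $A$ with $r_i<\delta$, discard balls disjoint from $\partial\Om$, and in the rest pick $\zeta_i\in B(x_i,r_i)\cap\partial\Om$. Since $B(\zeta_i,2r_i)\subset B(x_i,3r_i)$, doubling gives $\mu(B(\zeta_i,2r_i)\cap\overline\Om)\lesssim\mu(B(x_i,r_i)\cap\overline\Om)$, and codimensionality at $\zeta_i$ yields
\[
\nu(A) \le \sum_i \nu(B(\zeta_i,2r_i)\cap\partial\Om) \approx \sum_i \frac{\mu(B(\zeta_i,2r_i)\cap\overline\Om)}{r_i^\theta} \lesssim \sum_i \frac{\mu(B(x_i,r_i)\cap\overline\Om)}{r_i^\theta}.
\]
Infimizing over covers and sending $\delta\to 0^+$ completes Part 1.

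For Part 2, the crux is that the hypothesis forces $\widehat\mu_\pip(B_{d_\pip}(\zeta,r))\lesssim r^\theta\nu(\partial\Om)$ uniformly for $\zeta\in\partial\Om$ and small $r$, and $\nu(\partial\Om)<\infty$ since $\partial\Om$ is bounded and $\nu$ is doubling. Fix small $\delta>0$ and cover $\partial\Om$ by $\{B_{d_\pip}(\zeta,\delta):\zeta\in\partial\Om\}$; the $5r$-covering lemma extracts a disjoint subfamily $\{B_{d_\pip}(\zeta_i,\delta)\}$ whose $5$-fold dilates still cover $\partial\Om$. Then
\[
\widehat\mu_\pip(\partial\Om) \le \sum_i \widehat\mu_\pip(B_{d_\pip}(\zeta_i,5\delta)) \lesssim (5\delta)^\theta \sum_i \nu(B_{d_\pip}(\zeta_i,\delta)\cap\partial\Om) \le (5\delta)^\theta\,\nu(\partial\Om),
\]
using the codimensional hypothesis at radius $5\delta$, doubling of $\nu$, and disjointness. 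Since $\theta>0$, letting $\delta\to 0^+$ forces $\widehat\mu_\pip(\partial\Om)=0$. The only mild obstacle throughout is the bookkeeping around doubling at boundary-centered balls required to make the $5r$-covering chains legal; this is handled consistently by the fact that codimensionality together with doubling of the bulk measure at interior centers transfers doubling to balls centered on $\partial\Om$ via the intermediate doubling of $\nu$.
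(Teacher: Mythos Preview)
Your proof is correct and follows essentially the same route as the paper: both parts rest on the codimensional relation combined with the $5r$-covering lemma and doubling of $\nu$, with outer regularity of $\nu$ used for the upper bound in Part~1. The one place you are slightly more careful than the paper is in the lower bound of Part~1: the paper simply takes covers with centers $x_i\in A$, whereas you allow arbitrary centers (as the definition of $\mathcal{H}^{-\theta}_\eps$ permits) and then shift each center to a nearby point $\zeta_i\in\partial\Om$ at the cost of one doubling; this is a harmless refinement and the two arguments are otherwise identical.
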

	
	As both $d_\pip$ and $d$ are bi-Lipschitz equivalent on $\partial\Om$ and $\mu=\mu_\pip$ on $\Om_0$, 
	computing the codimensional Hausdorff measure
	$\mathcal{H}^{-\theta}$ of a subset of $\partial\Om$ (as in Definition~\ref{def:codim-Haus} with
	$U=\overline{\Om}$) with respect to the metric $d$ and the measure 
	$\mu$ is equivalent to computing it
	with respect to $d_\pip$ and $\mu_\pip$. In doing the computation of $\mathcal{H}^{-\theta}(A; \overline{\Om})$,
	we extend the measures $\mu$ and $\mu_\pip$ by zero to $\partial\Om$ so that $\mu(\partial\Om)=0$.
	
	\begin{proof}
		We fix $A\subset\partial\Om$.
		
		For each $\eps>0$ let $\{B_{d_\pip}(x_i, r_i)\}$ be a cover of $A$ with $r_i\le \eps$
		and $x_i\in A$.
		Then
		\[
		\sum_i\frac{\mu_\pip(B_{d_\pip}(x_i,r_i))}{r_i^\theta}\gtrsim \sum_i \nu( B_{d_\pip}(x_i, r_i)\cap\partial\Om )\geq\nu(A). 
		\]
		Taking the infimum over all such covers, and then letting
		$\eps\to 0^+$, this implies that $\mathcal{H}^{-\theta}(A;{\overline{\Om_\pip}})\gtrsim\nu(A)$.
		
		Since $\nu$ is Borel regular, it follows that for each $\eta>0$ there is a set $U\supset A$ such that $U$ is open in
		$\overline{\Om}$ and
		$\nu(U)\le \nu(A)+\eta$. Fix $\eps>0$,
		and consider a cover $\{B_{d_\pip}(x_i,r_i)\}$ of $A$ with $r_i\le \eps/5$, $x_i\in A$, 
		and $B_{d_\pip}(x_i,5r_i)\subset U$. By the basic $5r$-covering lemma 
		as in~\cite[Theorem~1.2]{Hei},  
		there is a countable pairwise disjoint subcollection $\{B_{d_\pip}(x_j,r_j)\}$ such that $\{B_{d_\pip}(x_j,5r_j)\}$ covers $A$. Then,
		by the doubling property of $\nu$,
		\begin{align*}
			\mathcal{H}^{-\theta}_\eps(A;\overline{\Om_\pip})\le \sum_j\frac{\mu_\pip(B_{d_\pip}(x_j,5r_j))}{(5r_j)^\theta}
			&\lesssim \sum_{j}\frac{\mu_\pip(B_{d_\pip}(x_j,r_j))}{r_j^\theta}\\
			&\lesssim \sum_j \nu( B_{d_\pip}(x_j,r_j)\cap\partial\Om )\\
			&=\nu( \bigcup _j B_{d_\pip}(x_j,r_j)\cap\partial\Om)\\
			&\leq \nu(U\cap\partial\Om)\ \le\  \nu(A)+\eta.
		\end{align*} 
		Letting $\eps\to 0^+$,
		we obtain $\mathcal{H}^{-\theta}(A;{\overline{\Om_\pip}})\lesssim\nu(A)+\eta$. Letting $\eta\to 0^+$ now yields the desired conclusion
		of the first part of the lemma. 
		
		To prove the second claim 
		of the lemma, we suppose that $\theta>0$ and
		argue as in the above proof to obtain that
		for each $\eps>0$ and any cover $\{B_{d_\pip}(x_i,r_i)\}$ of $\partial\Om$ with $x_i\in\partial\Om$ and $r_i\le \eps$, to see that
		\[
		\frac{\widehat{\mu}_\pip(\partial\Om)}{\eps^\theta}
		\le \frac{\sum_i\widehat{\mu}_\pip(B_{d_\pip}(x_i,r_i))}{\eps^\theta}\le \sum_i\frac{\mu_\pip(B_{d_\pip}(x_i,r_i))}{r_i^\theta}
		\lesssim \nu(\partial\Om)+\eta,
		\]
		from which we obtain that $\widehat{\mu}_\pip(\partial\Om)\lesssim \eps^\theta\, [\nu(\partial\Om)+\eta]$. Letting $\eps\to 0^+$
		yields the conclusion as $\nu(\partial\Om)$ is finite (note that $\partial\Om$ is bounded).
	\end{proof}
	
	\begin{proposition}\label{prop:bdryCap}
		Let $1\leq p <\infty$, and assume there exists a Borel regular measure $\nu$ on $\partial\Om$ that is $\theta$-codimensional to 
		$\mu_\pip$ for $0< \theta<p$. Then, for all $\zeta\in\partial\Om$ and $r>0$, we have
		$\capa_p^\pip(B_{d_\pip}(\zeta,r)\cap\partial\Om)>0$.
	\end{proposition}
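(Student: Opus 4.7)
The plan is to proceed by contradiction, combining Proposition~\ref{prop:cap-haus} with Lemma~\ref{lem:Haus} and the codimensionality condition on $\nu$.

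Suppose, for contradiction, that $\capa_p^\pip(B_{d_\pip}(\zeta,r)\cap\partial\Om)=0$. I want to apply Proposition~\ref{prop:cap-haus} to the ambient space $Z=U=\overline{\Om_\pip}^\pip$ equipped with the metric $d_\pip$ and the measure $\mu_\pip$ extended by zero to $\partial\Om$. The hypotheses are in place: Theorem~\ref{thm:double} gives that $\mu_\pip$ is doubling on $\Om_\pip$, and since $\mu_\pip(\partial\Om)=0$ (Lemma~\ref{lem:Haus}, using $\theta>0$), the doubling property transfers to balls centered at boundary points by comparing them with concentric sub-Whitney balls inside the uniform domain $\Om_\pip$. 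Similarly, Theorem~\ref{thm:PI-no-infty} supplies the $p$-Poincar\'e inequality on $\overline{\Om_\pip}^\pip$. Since $0<\theta<p$, Proposition~\ref{prop:cap-haus} yields
\[
\mathcal{H}^{-\theta}(B_{d_\pip}(\zeta,r)\cap\partial\Om)=0.
\]

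Next, Lemma~\ref{lem:Haus} translates this into a statement about $\nu$, namely
\[
\nu(B_{d_\pip}(\zeta,r)\cap\partial\Om)\approx \mathcal{H}^{-\theta}(B_{d_\pip}(\zeta,r)\cap\partial\Om)=0.
\]
On the other hand, the $\theta$-codimensionality of $\nu$ with respect to $\mu_\pip$ gives
\[
\nu(B_{d_\pip}(\zeta,r)\cap\partial\Om)\approx \frac{\mu_\pip(B_{d_\pip}(\zeta,r)\cap\Om_\pip)}{r^\theta}.
\]
Since $\zeta\in\partial\Om=\partial\Om_\pip$ is a limit point of $\Om$ under the metric $d_\pip$, the set $B_{d_\pip}(\zeta,r)\cap\Om$ is a non-empty open subset of $\Om_\pip$, and the doubling property (Theorem~\ref{thm:double}) ensures that $\mu_\pip$ assigns positive mass to non-empty open subsets of $\Om_\pip$. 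Hence $\mu_\pip(B_{d_\pip}(\zeta,r)\cap\Om_\pip)>0$, giving $\nu(B_{d_\pip}(\zeta,r)\cap\partial\Om)>0$, a contradiction.

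The main potential obstacle is the verification that Proposition~\ref{prop:cap-haus} genuinely applies in the ambient space $\overline{\Om_\pip}^\pip$ equipped with the extended measure; in particular, one must check that $\mu_\pip$ extended by zero is locally uniformly doubling across the boundary $\partial\Om$. This is where the bound $\theta>0$ is crucial, since it forces $\mu_\pip(\partial\Om)=0$ via Lemma~\ref{lem:Haus}, letting boundary balls be controlled by nearby interior sub-Whitney balls. Everything else is a direct chaining of the doubling property, the Poincar\'e inequality, and the $\theta$-codimensional relation between $\nu$ and $\mu_\pip$.
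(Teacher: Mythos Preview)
Your proof is correct and follows essentially the same route as the paper: both invoke Theorems~\ref{thm:double} and~\ref{thm:PI-no-infty} to justify applying Proposition~\ref{prop:cap-haus} on $\overline{\Om_\pip}^\pip$, then use Lemma~\ref{lem:Haus} to pass between $\mathcal{H}^{-\theta}$ and $\nu$, and finally observe that $\nu(B_{d_\pip}(\zeta,r)\cap\partial\Om)>0$. The only cosmetic differences are that the paper argues directly (via the contrapositive of Proposition~\ref{prop:cap-haus}) rather than by contradiction, and concludes positivity of $\nu$ on the ball from the doubling of $\nu$ rather than from the codimensional comparison with $\mu_\pip$---but these are equivalent, since the paper notes at the start of Section~\ref{Sec:eight} that the codimensionality forces $\nu$ to be doubling.
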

	
	\begin{proof}
		From Theorem~\ref{thm:double} and Theorem~\ref{thm:PI-no-infty} we have that $\overline{\Om}^\pip$, equipped with the 
		metric $d_\pip$ and the
		measure $\mu_\pip$, is doubling and supports a $p$-Poincar\'e inequality. Note that $\mu=\mu_\pip$ on
		$\Om_0$ and $d_\pip$ is bi-Lipschitz equivalent to $d$ on $\Om_0\cup\partial\Om$. 
		
		Due to Proposition~\ref{prop:cap-haus}, it suffices to show that 
		$\mathcal{H}^{-\theta}(B_{d_\pip}(\zeta,r)\cap\partial\Om)>0$. From Lemma~\ref{lem:Haus}, 
		$\mathcal{H}^{-\theta}(B_{d_\pip}(\zeta,r)\cap\partial\Om)\approx \nu(B_{d_\pip}(\zeta,r)\cap\partial\Om)$, which must be positive 
		because $\nu$ is doubling. 
	\end{proof}

	Recall that the space $D^{1,p}(\Om,d,\mu)$ consists of measurable 
	functions on $\Om$ which have an upper gradient in $L^p(\Om,\mu)$. This space is naturally equipped 
	with the seminorm $\Vert u\Vert_{D^{1,p}(\Om,d,\mu)}:=\Vert g_{u,d}\Vert_{L^p(\Om,\mu)}$;
	note from~\eqref{eq:chain-rule} that 
	$\Vert g_{u,d}\Vert_{L^p(\Om,\mu)}=\Vert g_{u,\pip}\Vert_{L^p(\Om_\pip,\mu_\pip)}$.
	
	\begin{proposition}\label{prop:Dp-trace}
		Let $1\leq p <\infty$, and assume there exists a Borel regular measure $\nu$ on $\partial\Om$ that is $\theta$-codimensional to $\mu$ for 
		$0< \theta<p$. For every $u\in D^{1,p}(\Om,d,\mu)$ there exists $Tu\in B^{1-\theta/p}_{p,p}(\partial\Om,\nu)$ such that
		\begin{equation}\label{eq:trace}
			\lim_{r\rightarrow{0^+}}\vint_{B_d(\zeta,r)\cap\Om}\!|u(x)-Tu(\zeta)|\,d\mu(x)=0
		\end{equation}
		for $\nu$-a.e. $\zeta\in\partial\Om$. Moreover, the operator $u\mapsto Tu$ is bounded from $D^{1,p}(\Om,d,\mu)$ to 
		$B^{1-\theta/p}_{p,p}(\partial\Om,\nu)$.
	\end{proposition}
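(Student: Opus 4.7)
The approach is to transfer the problem to the conformally deformed space and invoke Proposition~\ref{prop:Besov-trace}. Take $Z := \overline{\Om_\pip}^\pip$ equipped with $d_\pip$ and $\mu_\pip$ (extended by zero to $\partial\Om \cup \{\infty\}$), and set $U := \Om_\pip \setminus \{\infty\}$, which is a bounded uniform domain by Theorem~\ref{thm:uniform} and Lemma~\ref{lem:dist-to-infty}. Theorems~\ref{thm:double} and~\ref{thm:PI-no-infty} supply the doubling property of $\mu_\pip$ and the $p$-Poincar\'e inequality for $(Z,d_\pip,\mu_\pip)$, and $\mu_\pip|_U$ is also doubling. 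What remains to be put in place, in order to feed this setup into Proposition~\ref{prop:Besov-trace}, is the codimensional relation for $\nu$ with respect to $(d_\pip,\mu_\pip)$, that is,
\[
\nu(B_{d_\pip}(\zeta,r)\cap\partial\Om)\approx \frac{\mu_\pip(B_{d_\pip}(\zeta,r)\cap\Om_\pip)}{r^\theta}
\]
for $\zeta\in\partial\Om$ and $0<r\le \diam_{d_\pip}(\partial\Om)$.

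For sufficiently small $r$, the ball $B_{d_\pip}(\zeta,r)$ is contained in $\Om_0\cup\partial\Om$, where $\pip\equiv 1$, so that $d$ and $d_\pip$ are bi-Lipschitz equivalent (Lemma~\ref{lem:nearby-points}) and $\mu=\mu_\pip$; the codimensional hypothesis on $\nu$ with respect to $(d,\mu)$ then delivers the required relation directly. For radii bounded away from $0$ and at most $\diam_{d_\pip}(\partial\Om)$, both sides of the relation are comparable to positive constants, since $\nu$ and $\mu_\pip$ are bounded and doubling and $r\approx 1$ there. Proposition~\ref{prop:Besov-trace} now yields a bounded linear operator $T:N^{1,p}(U)\to B^{1-\theta/p}_{p,p}(\partial\Om,\nu)$ that satisfies the $L^p$-Lebesgue point property with respect to $d_\pip$ and $\mu_\pip$. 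Given $u\in D^{1,p}(\Om,d,\mu)$, Proposition~\ref{lem:D-to-N} together with~\eqref{eq:N=N} shows $u\in N^{1,p}(U)$, so $Tu$ is defined. Replacing $u$ by $u-c_u$ with $c_u=\vint_\Om u\,d\mu_\pip$ (which leaves the Besov seminorm of $Tu$ unchanged and keeps the $D^{1,p}$-seminorm intact), and combining the bound from Proposition~\ref{prop:Besov-trace} with Proposition~\ref{lem:D-to-N} and the identity~\eqref{eq:chain-rule-integral}, yields
\[
\|Tu\|_{B^{1-\theta/p}_{p,p}(\partial\Om,\nu)}
\lesssim \|u-c_u\|_{N^{1,p}(U)}
\lesssim \|g_{u,d}\|_{L^p(\Om,\mu)}
=\|u\|_{D^{1,p}(\Om,d,\mu)}.
\]

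It remains to upgrade the Lebesgue-point conclusion of Proposition~\ref{prop:Besov-trace}, which is phrased in terms of $d_\pip$-balls and $\mu_\pip$, to the statement~\eqref{eq:trace} in terms of $d$-balls and $\mu$. For small enough $r$ we have $B_d(\zeta,r)\cap\Om\subset B_{d_\pip}(\zeta,Cr)\cap\Om_\pip$ and the reverse inclusion with a suitable constant, with comparable measures by the identity $\mu=\mu_\pip$ near $\partial\Om$; an application of H\"older's inequality then converts the $L^p$-average convergence provided by Proposition~\ref{prop:Besov-trace} into the $L^1$-average convergence in~\eqref{eq:trace}. The main obstacle is the bookkeeping surrounding the codimensional hypothesis: translating it cleanly from $(d,\mu)$ to $(d_\pip,\mu_\pip)$ at both the small and the intermediate scales, and matching the two formulations of the Lebesgue point property. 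Once this is accomplished, the statement is a direct application of Proposition~\ref{prop:Besov-trace} via the transfer principles of Section~\ref{Sect:Dirichlet}.
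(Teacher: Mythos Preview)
Your approach is essentially the paper's: pass to the deformed space, invoke Proposition~\ref{prop:Besov-trace}, and then translate the Lebesgue-point property back via the local bi-Lipschitz equivalence of $d$ and $d_\pip$ and the identity $\mu=\mu_\pip$ near $\partial\Om$. Your handling of the codimensional relation and of the $L^p$-to-$L^1$ conversion is in fact more explicit than the paper's.

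There is one genuine slip. You take $U=\Om_\pip\setminus\{\infty\}$, whose boundary in $Z=\overline{\Om_\pip}^\pip$ is $\partial\Om\cup\{\infty\}$, not $\partial\Om$. Proposition~\ref{prop:Besov-trace} demands the codimensional relation
\[
\nu(B_{d_\pip}(w,r)\cap\partial U)\approx \frac{\mu_\pip(B_{d_\pip}(w,r)\cap U)}{r^\theta}
\]
for \emph{every} $w\in\partial U$. At $w=\infty$ this fails: for small $r$ the ball $B_{d_\pip}(\infty,r)$ is disjoint from $\partial\Om$, so the left side vanishes while the right side is positive. Hence Proposition~\ref{prop:Besov-trace} cannot be applied with this $U$. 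The fix is exactly what the paper does: take $U=\Om_\pip$ itself, which is uniform (this is the result of~\cite{GS} recalled at the start of Section~5, rather than Theorem~\ref{thm:uniform}) and has $\partial U=\partial\Om$; then your codimensional verification, which you carried out only for $\zeta\in\partial\Om$, is precisely what is needed, and the rest of your argument goes through unchanged via~\eqref{eq:N=N}.
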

	
	Recall that since $\partial\Om$ is bounded, $B^{1-\theta/p}_{p,p}(\partial\Om,\nu)\subset L^p(\partial\Om,\nu)$. 
	
	\begin{proof}
		By Lemma~\ref{lem:D-to-N} and~\eqref{eq:N=N}, 
		it suffices to look at $u\in N^{1,p}(\Om_\pip,d_\pip,\mu_\pip)$. Since $\Om_\pip$ is a uniform domain with compact closure (see Proposition~\ref{prop:compactness}), it follows from Proposition~\ref{prop:Besov-trace} that there exists $Tu\in B^{1-\theta/p}_{p,p}(\partial\Om,\nu)$ satisfying
		\begin{equation}\label{eq:pip-trace}
			\lim_{r\rightarrow{0^+}}\vint_{B_{d_\pip}(\zeta,r)\cap\Om_\pip}\!|u(x)-Tu(\zeta)|\,d\mu_\pip(x)=0
		\end{equation}
		for $\nu$-a.e. $\zeta\in\partial\Om$ and 
		$\|Tu\|_{B^{1-\theta/p}_{p,p}(\partial\Om,\nu)}\lesssim \|u\|_{D^{1,p}(\Om_\pip,d_\pip,\mu_\pip)}=\|u\|_{D^{1,p}(\Om,d,\mu)}$. 
		
		For $\zeta\in\partial\Om$ and for sufficiently small $r>0$, we have that $\mu_\pip=\mu$ on $B_d(\zeta,r)$ which is
		bi-Lipschitz equivalent to $B_{d_\pip}(\zeta,r)$. Therefore, \eqref{eq:pip-trace} is equivalent to \eqref{eq:trace}. \qedhere
	\end{proof}

	We now turn our attention to proving an Adams-type inequality on $\overline{\Om_\pip}^\pip$ with respect to the
	measure $\nu$ supported on $\partial\Om$.

	\begin{remark}\label{rem:better-PI}
		From Theorems~\ref{thm:double} and~\ref{thm:PI-no-infty}, it follows that $(\overline{\Om_\pip}^\pip, d_\pip,\mu_\pip)$ 
		is doubling and supports
		a $p$-Poincar\'e inequality. Hence by~\cite{KZ} (see also~\cite[Theorem~12.3.9]{HKST}), 
		$(\overline{\Om_\pip}^\pip, d_\pip,\mu_\pip)$ also supports a $\widetilde{p}$-Poincar\'e inequality for some $1\le \tilde{p}<p$.
	\end{remark} 
	
	In the following, $\Qpip$ is the lower mass bound for the measure $\mu_\pip$. 
	
	\begin{theorem}\label{thm:Adams}
		Let $1<p<\infty$, and assume there exists a Borel regular measure $\nu$ on $\partial\Omega$ that is 
		$\theta$-codimensional to $\mu_\pip$ with $0< \theta<p<\Qpip$, and let $q>p$ be a real number given by
		\[
		\theta=-\frac{\Qpip\ q}{p} + \Qpip + \frac{q}{\tilde{p}},
		\]
		where $1\leq \widetilde{p}<p$ is from Remark~\ref{rem:better-PI}.  
		Then for every 
		$u\in N^{1,p}(\overline{\Omega_\pip}^\pip,d_\pip,\mu_\pip)$ and ball $B_{d_\pip}\subset \overline{\Omega_\pip}^\pip$,
		\[
		\inf_{c\in\R}\left(\int_{B_{d_\pip}}\!|u-c|^q\,d\nu\right)^{1/q}
		\lesssim \frac{\rad_{d_\pip}(B_{d_\pip})^{1-\theta/q}}
		{\mu_\pip(B_{d_\pip})^{1/p-1/q}} 
		\left(\int_{2B_{d_\pip}}\!g_{u,\pip}^p\,d\mu_\pip\right)^{1/p}.
		\]
		In particular,
		\[
		\inf_{c\in\R}\left(\int_{\partial\Om}\!|u-c|^q\,d\nu\right)^{1/q}
		\lesssim \frac{\mathrm{diam}_{d_\pip}(\Omega_\pip)^{1-\theta/q}}
		{\mu_\pip(\Omega_\pip)^{1/p-1/q}}
		\left(\int_{\Om_\pip}\!g_{u,\pip}^p\,d\mu_\pip\right)^{1/p}.
		\]
	\end{theorem}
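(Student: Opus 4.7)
The plan is to combine a pointwise bound for $|u-u_B|$ coming from the self-improved $\tilde{p}$-Poincar\'e inequality available via Remark~\ref{rem:better-PI} with the Adams--Riesz-type inequality of Proposition~\ref{prop:Riesz-neu} applied with parameters $\bar{p}=p/\tilde{p}$ and $\bar{q}=q/\tilde{p}$. The key algebraic observation is that the relation defining $\theta$ in the statement is precisely the hypothesis of Proposition~\ref{prop:Riesz-neu} after these substitutions, since
\[
Q_{\mu_\pip}^-+\bar{q}-\frac{Q_{\mu_\pip}^-\,\bar{q}}{\bar{p}}=Q_{\mu_\pip}^-+\frac{q}{\tilde{p}}-\frac{Q_{\mu_\pip}^-\,q}{p}=\theta,
\]
and that $\bar{p}>1$ (because $\tilde{p}<p$) and $\bar{q}>\bar{p}$ (because $q>p$), so Proposition~\ref{prop:Riesz-neu} is applicable.

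First, I fix a ball $B=B_{d_\pip}(x_0,r)\subset\overline{\Om_\pip}^\pip$ and take $c=u_B:=\vint_B u\,d\mu_\pip$. Using the $\tilde{p}$-Poincar\'e inequality on $\overline{\Om_\pip}^\pip$ and the doubling property of $\mu_\pip$, a standard telescoping argument over the sequence of balls $B_i=B_{d_\pip}(x,2^{-i}r)$ gives, for $\mu_\pip$-a.e.\ $x\in B$,
\[
|u(x)-u_B|\lesssim r\sum_{i\ge 0}2^{-i}\Bigl(\,\vint_{\lambda B_i}g_{u,\pip}^{\tilde{p}}\,d\mu_\pip\Bigr)^{1/\tilde{p}}\lesssim r^{1-1/\tilde{p}}\bigl[I_{1,2\lambda B}(g_{u,\pip}^{\tilde{p}})(x)\bigr]^{1/\tilde{p}},
\]
where the last step uses H\"older's inequality to pull the exponent $1/\tilde{p}$ outside the geometric sum and an annular reindexing (combined with the lower mass bound for $\mu_\pip$) to recognize the result as $I_1$ of $g_{u,\pip}^{\tilde p}$. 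Running the same chaining argument with balls centered at $x\in\partial\Om$, and interpreting $u(x)$ as the trace $Tu(x)$ guaranteed by Proposition~\ref{prop:Dp-trace}, one obtains the same bound for $\nu$-a.e.\ $x\in B\cap\partial\Om$.

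Raising to the power $q$ and integrating against $d\nu$ on $B\cap\partial\Om$, the problem reduces to estimating $\|I_{1,2\lambda B}(g_{u,\pip}^{\tilde{p}})\|_{L^{q/\tilde{p}}(\nu)}$. Proposition~\ref{prop:Riesz-neu} with $\bar{p}=p/\tilde{p}$ and $\bar{q}=q/\tilde{p}$ then yields
\[
\Bigl(\int_B\bigl[I_{1,2\lambda B}(g_{u,\pip}^{\tilde{p}})\bigr]^{q/\tilde{p}}\,d\nu\Bigr)^{\tilde{p}/q}\lesssim \mu_\pip(2\lambda B)^{\tilde{p}/q-\tilde{p}/p}\,r^{1-\theta\tilde{p}/q}\,\|g_{u,\pip}\|_{L^p(2\lambda B,\mu_\pip)}^{\tilde{p}}.
\]
Combining this with the pointwise estimate, the powers of $r$ sum as $(1-1/\tilde{p})+(1/\tilde{p}-\theta/q)=1-\theta/q$, and the doubling of $\mu_\pip$ lets me replace $\mu_\pip(2\lambda B)$ by $\mu_\pip(B)$ and absorb $2\lambda B$ into $2B$, all up to multiplicative constants. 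This produces the first claimed inequality. The second, global inequality follows by specializing $B=\overline{\Om_\pip}^\pip$, which is bounded and compact under $d_\pip$ by Proposition~\ref{prop:compactness}, so that $\diam_{d_\pip}(\Om_\pip)$ plays the role of $r$.

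The main technical obstacle will be the pointwise estimate at boundary points: while the interior version is classical in doubling metric measure spaces supporting a $\tilde{p}$-Poincar\'e inequality, its extension to $\nu$-a.e.\ $x\in\partial\Om$ is more delicate because $\mu_\pip(\partial\Om)=0$ by Lemma~\ref{lem:Haus}, so one cannot directly quote a $\mu_\pip$-almost-everywhere statement. The resolution is to exploit that the $\tilde{p}$-Poincar\'e inequality holds on $\overline{\Om_\pip}^\pip$ (so the chain of balls may be centered on the boundary) and that $\nu$-a.e.\ $x\in\partial\Om$ is a Lebesgue point of $u$ with respect to $\mu_\pip$-averages, by Proposition~\ref{prop:Dp-trace} and the codimensional structure of $\nu$.
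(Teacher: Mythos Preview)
Your proposal is correct and follows essentially the same strategy as the paper: obtain a pointwise Riesz-potential bound from the improved $\tilde p$-Poincar\'e inequality, then apply Proposition~\ref{prop:Riesz-neu} with $\bar p=p/\tilde p$ and $\bar q=q/\tilde p$, noting that the relation defining $\theta$ is precisely the hypothesis there.

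The only substantive difference is how the pointwise bound is secured at points of $\partial\Om$. You run the telescoping chain directly for $u\in N^{1,p}$ and then, because the resulting estimate is only $\mu_\pip$-a.e.\ and $\mu_\pip(\partial\Om)=0$, you separately justify it $\nu$-a.e.\ on $\partial\Om$ via the trace operator of Proposition~\ref{prop:Dp-trace}. The paper sidesteps this entirely by first taking $u\in\Lip(\overline{\Om_\pip}^\pip)$, so that the pointwise bound (quoted from \cite[Theorem~9.5]{Hei}, which is exactly your chaining argument packaged for continuous functions on geodesic doubling spaces) holds at \emph{every} $x\in B_{d_\pip}$, including boundary points; the integration against $d\nu$ is then immediate, and the general case follows by density of Lipschitz functions in $N^{1,p}(\overline{\Om_\pip}^\pip)$. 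Your route is slightly more self-contained (it does not invoke Lipschitz density), while the paper's route is cleaner and avoids the delicate $\nu$-a.e.\ boundary analysis altogether. A minor cosmetic point: the paper takes $c=u_{2B_{d_\pip}}$ rather than $u_{B_{d_\pip}}$, which aligns the domain of the Riesz potential with the right-hand integral without an extra doubling step.
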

	
	\begin{proof}
		Fix a ball $B_{d_\pip}:=B_{d_\pip}(y,r)\subset \overline{\Omega_\pip}^\pip$ and $u\in \Lip(\overline{\Omega_\pip}^\pip)$. Since $\overline{\Omega_\pip}^\pip$ is a compact length space, it follows that it is a geodesic space, and compactness implies that $u$ is a bounded continuous function. Moreover, it satisfies a $\widetilde{p}$-Poincar\'{e} inequality by Remark~\ref{rem:better-PI}. Hence, setting 
		\[
		u_{2B_{d_\pip}}:=\vint_{2B_{d_\pip}}u\, d\mu_\pip,
		\]
		it follows from~\cite[Theorem~9.5]{Hei} that 
		$$
		|u(x)-u_{2B_{d_\pip}}|^{\widetilde{p}}\lesssim r^{{\widetilde{p}}-1}I_{2B_{d_\pip}}(g_{u,\pip}^{\widetilde{p}})(x)
		$$
		for $x\in B_{d_\pip}$, where 
		$I_{2B_{d_\pip}}$ is the Riesz potential, see Definition~\ref{Riesz}. 
		
		Integrating and using Proposition~\ref{prop:Riesz-neu}
		with $\bar{p}=\frac{p}{\widetilde{p}}$ and $\bar{q}=\frac{q}{\widetilde{p}}$ yields  
		\begin{align*}
			&\left(\int_{B_{d_\pip}}\!|u-u_{2B_{d_\pip}}|^q\,d\nu\right)^{1/q}
			=\left(\int_{B_{d_\pip}}\!(|u-u_{2B_{d_\pip}}|^{\widetilde{p}})^{q/\widetilde{p}}\,d\nu\right)^{1/q}\\
			&\qquad\ \
			\lesssim r^{\frac{\widetilde{p}-1}{\widetilde{p}}}\left(\left(\int_{2B_{d_\pip}}I_{2B_{d_\pip}}(g_{u,\pip}^{\widetilde{p}})^{q/\widetilde{p}}\,d\nu\right)^{\widetilde{p}/q}\right)^{1/\widetilde{p}}\\
			&\qquad\ \
			\lesssim r^{\frac{\widetilde{p}-1}{\widetilde{p}} +\frac{\Qpip}{p}-\frac{\Qpip}{q} }\mu_\pip(2B_{d_\pip})^{1/q-1/p} \left(\left(\int_{2B_{d_\pip}}(g_{u,\pip}^{\widetilde{p}})^{p/\widetilde{p}}\,d\mu_\pip\right)^{\widetilde{p}/p}\right)^{1/\widetilde{p}}\\
			&\qquad\ \
			\lesssim r^{\frac{\widetilde{p}-1}{\widetilde{p}} +\frac{\Qpip}{p}-\frac{\Qpip}{q} }\mu_\pip(B_{d_\pip})^{1/q-1/p} \left(\int_{2B_{d_\pip}}g_{u,\pip}^p\,d\mu_\pip\right)^{1/p}. 
		\end{align*}
		The result follows from the density of the Lipschitz functions in $N^{1,p}(\overline{\Om_\pip}^\pip)$, 
		see, for example, \cite[Theorem~8.2.1]{HKST}. 
	\end{proof}
	
	The above Adams-type inequality also gives us a way to link traces, to $\partial\Om$, of Dirichlet-Sobolev functions on $\Om$,
	and through this, we will see next that the relative capacities of compact subsets of $\partial\Om$ are governed by the
	$\nu$-measure of those subsets.
	As a consequence of the above Adams-type inequality, we have the following corollary giving us a lower bound on relative
	capacities of subsets of $\partial\Om$. 
	
	Given a domain $\Om$ and a point $x\in\partial\Om$, the point $x$ is called
	a \emph{regular point} for the domain if for each continuous function $f:\partial\Om\to\R$ we have
	\[
	\lim_{\Om\ni y\to x}H_\Omega f(y)=f(x),
	\]
	where $H_\Omega f$ is the Perron solution to the Dirichlet problem of finding $p$-harmonic functions on $\Om$ with trace
	$f$. We refer the interested reader to~\cite[Chapter~10--11]{BB}
	or~\cite{BBS-Per} for more on classification of boundary points and Perron solutions.

	\begin{corollary}\label{cor:relcap-lowerbd}
		Let $1<p<\infty$. For each $0<r\le \min\{1,\diam_{d_\pip}(\partial\Om)\}/2$ and $\zeta\in\partial\Om$, we have
		\[
		\nu(B_{d_\pip}(\zeta,r)\cap\partial\Om)\lesssim r^{p-\theta}\, \rcapa_p^\pip(B_{d_\pip}(\zeta,r)\cap\partial\Om,B_{d_\pip}(\zeta,2r)).
		\]
		Consequently, as $0<\theta<p$, then each point of $\partial\Om$ is a regular point for the domain $\Om_\pip$. 
	\end{corollary}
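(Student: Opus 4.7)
The strategy is to estimate $\nu(K)$, where $K := B_{d_\pip}(\zeta, r) \cap \partial\Om$, by plugging an almost-extremal function for the $p$-capacity into the Adams-type inequality from Theorem~\ref{thm:Adams}, then exploiting that its trace vanishes on $\partial\Om \setminus B_{d_\pip}(\zeta, 2r)$. Take $u \in N^{1,p}(\overline{\Om_\pip}^\pip)$ admissible for the capacity; truncation allows $0 \le u \le 1$ with $u \equiv 1$ on $K$ and $u \equiv 0$ on $\overline{\Om_\pip}^\pip \setminus B_{d_\pip}(\zeta, 2r)$. Proposition~\ref{prop:Besov-trace} (as used in Proposition~\ref{prop:Dp-trace}) then gives that $Tu = 1$ $\nu$-a.e.\ on $K$ and $Tu = 0$ $\nu$-a.e.\ on $\partial\Om \setminus B_{d_\pip}(\zeta, 2r)$. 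Applying Theorem~\ref{thm:Adams} on a ball $B$ centered at $\zeta$, and then H\"older's inequality on the left-hand side using $\nu(B \cap \partial\Om) \approx \mu_\pip(B)/\rad(B)^\theta$ from $\theta$-codimensionality, upgrades the $(p,q)$-estimate to the $(p,p)$-form
\begin{equation*}
\inf_{c \in \R} \int_B |Tu - c|^p\, d\nu \;\lesssim\; \rad(B)^{p - \theta}\!\int_{2B} g_{u,\pip}^p\, d\mu_\pip,
\end{equation*}
in which the $\mu_\pip(B)$ factors cancel cleanly.

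I then pick $B^\star := B_{d_\pip}(\zeta, C r)$ with $C$ large enough---using reverse doubling of $\mu_\pip$ on the connected space $\overline{\Om_\pip}^\pip$, transferred through $\theta$-codimensionality---that $\nu((B^\star \setminus B_{d_\pip}(\zeta, 2r)) \cap \partial\Om) \gtrsim \nu(K)$. Splitting the $(p,p)$-Adams integral on $B^\star$ into its contributions on $B_{d_\pip}(\zeta, 2r) \cap \partial\Om$ and on the annulus where $Tu \equiv 0$, and using $|Tu - c| \ge 1 - c$ on $K$ and $|Tu - c| = |c|$ on the annulus, the elementary inequality $(1-c)^p + c^p \ge 2^{1-p}$ for $c \in [0,1]$ (from convexity) yields $\nu(K) \lesssim r^{p-\theta}\int_{2B^\star} g_{u,\pip}^p\, d\mu_\pip$. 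Taking the infimum over admissible $u$ produces the capacity inequality.

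For the regularity assertion, doubling of $\nu$ forces $\nu(B_{d_\pip}(\zeta, r) \cap \partial\Om) > 0$ at every scale, so the proved inequality gives positive relative $p$-capacity at every scale; comparison with the elementary Lipschitz-cutoff upper bound $\rcapa_p^\pip(\overline{B}_{d_\pip}(\zeta, r), B_{d_\pip}(\zeta, 2r)) \lesssim \mu_\pip(B_{d_\pip}(\zeta, 2r))/r^p$ yields positive capacity density of $\partial\Om$ at $\zeta$ uniformly in $r$, from which regularity follows by the Wiener-type criterion in~\cite{BB}. The main subtlety I anticipate is the choice of $C$: producing $\nu((B^\star \setminus B_{d_\pip}(\zeta, 2r)) \cap \partial\Om) \gtrsim \nu(K)$ requires a quantitative reverse doubling of $\nu$, which must be derived from reverse doubling of $\mu_\pip$ (automatic by connectedness and doubling) together with the hypothesis $\theta < p < \Qpip$ implicit in Theorem~\ref{thm:Adams}.
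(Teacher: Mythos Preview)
Your overall strategy---plug an admissible test function $u$ for the capacity into the Adams inequality and extract a lower bound on $\nu(K)$---is exactly the paper's. The difference, and the gap, lies in how you handle the infimizing constant $c$.

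You split the $\nu$-integral over $B^\star$ into the piece on $K$ (where $Tu=1$) and the piece on the annulus $(B^\star\setminus B_{d_\pip}(\zeta,2r))\cap\partial\Om$ (where $Tu=0$), and then need $\nu(\text{annulus})\gtrsim\nu(K)$ to bound the infimum over $c$ from below. This is precisely reverse doubling of $\nu$, and your proposed justification does not yield it: the exponent $\Qpip$ is the \emph{lower} mass bound exponent of $\mu_\pip$, giving $\mu_\pip(B(\zeta,r))/\mu_\pip(B(\zeta,R))\gtrsim (r/R)^{\Qpip}$, which is an \emph{upper} bound on $\mu_\pip(B(\zeta,R))/\mu_\pip(B(\zeta,r))$. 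Transferring through codimensionality, $\nu(B(\zeta,R))/\nu(B(\zeta,r))\approx[\mu_\pip(B(\zeta,R))/\mu_\pip(B(\zeta,r))]\cdot(r/R)^\theta$, and to force this to exceed a fixed constant larger than $1$ you would need the \emph{upper} mass bound exponent $Q^+_{\mu_\pip}$ to satisfy $Q^+_{\mu_\pip}>\theta$, which is nowhere assumed. The condition $\theta<p<\Qpip$ is of no help here.

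The paper avoids this entirely by choosing $c$ to be the $\mu_\pip$-average $u_{B_{d_\pip}(\zeta,4r)}$. Since $u$ is supported in $B_{d_\pip}(\zeta,2r)$ with $0\le u\le 1$, reverse doubling of $\mu_\pip$ alone (automatic from connectedness of $\overline{\Om_\pip}^\pip$ and doubling) gives $u_{B_{d_\pip}(\zeta,4r)}\le \mu_\pip(B_{d_\pip}(\zeta,2r))/\mu_\pip(B_{d_\pip}(\zeta,4r))\le c_0<1$, whence $|u-c|\ge 1-c_0$ on $K$ and
\[
\nu(K)^{1/q}\lesssim\Big(\int_{B_{d_\pip}(\zeta,4r)}|u-u_{B_{d_\pip}(\zeta,4r)}|^q\,d\nu\Big)^{1/q}.
\]
Now Adams and codimensionality finish as in your computation (the $\mu_\pip(B)$ factors cancel exactly as you showed). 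No property of $\nu$ beyond codimensionality is used. Your regularity argument is fine and matches the paper's, which invokes uniform $p$-fatness and~\cite{BMS}.
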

	
	Recall that $\mu_\pip$ is doubling, and $\overline{\Om_\pip}^\pip$ is connected, and so we have a reverse doubling property:
	there is a positive constant $c<1$ such that for all $x\in\overline{\Om_\pip}^\pip$ and 
	$0<r<\tfrac12 \diam_{d_\pip}(\Om_\pip)$, we have $\mu_\pip(B_{d_\pip}(x,r))\le c\, \mu_\pip(B_{d_\pip}(x,2r))$.
	
	\begin{proof}
		Fix $\zeta$ and $r$ as in the statement of the corollary.
		Let $u\in N^{1,p}(\Om_\pip,d_\pip,\mu_\pip)$ such that $u=1$ on $B_{d_\pip}(\zeta,r)\cap\partial\Om$, $0\le u\le 1$ on
		$\Om_\pip$, and $u=0$ on $\Om_\pip\setminus B_{d_\pip}(\zeta,2r)$. Then, 
		\begin{align*}
			\nu(B_{d_\pip}(\zeta,r)\cap\partial\Om)^{1/q}&\le \left(\int_{B_{d_\pip}(\zeta,r)}u^q\, d\nu\right)^{1/q}\\
			&\le \left(\int_{B_{d_\pip}(\zeta,4r)}u^q\, d\nu\right)^{1/q}\\
			&\lesssim \left(\int_{B_{d_\pip}(\zeta,4r)}|u-u_{B_{d_\pip}(\zeta,4r)}|^q\, d\mu\right)^{1/q}\\
			&\lesssim r^{1-\theta/q}\mu_\pip(B_{d_\pip}(\zeta,4r))^{1/q-1/p}\left(\int_{B_{d_\pip}(\zeta,2r)}g_{u,\pip}^p\, d\mu_\pip\right)^{1/p}.
		\end{align*}
		In obtaining the penultimate inequality, we used H\"older's inequality and the reverse doubling property of $\mu_\pip$, while in the ultimate inequality we used Theorem~\ref{thm:Adams} applied to $B_{d_\pip}(\zeta,4r)$ and the doubling property of $\mu_\pip$.
		Now an application of the  
		codimensionality relationship between $\nu$ and $\mu$ together with the fact that for
		$r<1/2$ the measure $\mu_\pip=\mu$, and then taking the infimum over all such $u$ on the right-hand side, yields the desired
		inequality.
		
		To verify the second claim of the corollary, we use the results of~\cite{BMS}. Note that from \cite{BB}, for each 
		$\zeta\in\partial\Om$
		and $r>0$, we have
		\[
		\rcapa_p^\pip(B_{d_\pip}(\zeta, r), B_{d_\pip}(\zeta, 2r))\approx \frac{\mu_\pip(B_{d_\pip}(\zeta,r))}{r^p}.
		\]
		So if $0<r<1/2$, then using the codimensionality of $\nu$ with respect to $\mu$ (and hence $\mu_\pip$), we see that
		\begin{align*}
			\rcapa_p^\pip(B_{d_\pip}(\zeta, r), B_{d_\pip}(\zeta, 2r))&\approx \frac{\nu(B_{d_\pip}(\zeta, r)\cap\partial\Om)}{r^{p-\theta}}\\
			& \lesssim \rcapa_p^\pip(B_{d_\pip}(\zeta,r)\cap\partial\Om,B_{d_\pip}(\zeta,2r)).
		\end{align*}
		Thus, $\partial\Om$ is uniformly $p$-fat with respect to the domain $\Om_\pip$, that is,
		\[
		\frac{\rcapa_p^\pip(B_{d_\pip}(\zeta,r)\cap\partial\Om,B_{d_\pip}(\zeta,2r))}
		{\rcapa_p^\pip(B_{d_\pip}(\zeta, r), B_{d_\pip}(\zeta, 2r))}\gtrsim 1,
		\]
		and so by the results in~\cite[Theorem~5.1]{BMS} the conclusion follows.
	\end{proof}

	\section{Effect on the Dirichlet problem for $(\Om, d,\mu)$}\label{Sec:nine}
	
	Now we have the tools necessary to verify Theorem~\ref{thm:one-point-two}. We split the proof into two theorems below.
	
	To show existence of solutions to a Dirichlet problem for bounded domains can be done via the direct method
	of calculus of variations: any sequence of functions, with the same boundary value, and with $p$-energy converging
	to the infimum of the $p$-energies of the class of all Dirichlet-Sobolev functions satisfying the fixed boundary conditions
	is done by first showing that this sequence is bounded in the Sobolev class (which is reflexive), and this in turn is 
	accomplished by using the Poincar\'e inequality, as the domain is bounded and hence sits inside a large ball
	to which the Poincar\'e inequality can be applied. 
	See~\cite{S2} for details regarding this method in the setting of metric spaces.
	When the domain is unbounded however, this method cannot work.
	
	In this final section of the paper, 
	we use the tools developed in the previous two sections to study existence and uniqueness issues related
	to solutions to the Dirichlet problem regarding $p$-harmonic functions on an unbounded domain whose boundary
	is bounded. In~\cite{Hansevi} a Perron method from~\cite{BBS-Per} was adapted to solve the Dirichlet problem
	corresponding to continuous boundary data on unbounded domains that were $p$-parabolic. As in~\cite{HoSh},
	an unbounded domain
	$\Om$ is \emph{$p$-parabolic} if $\Mod_p(\Gamma_\infty)=0$, where $\Gamma_\infty$ 
	consists of all locally rectifiable curves in
	$\Om$ that leave every compact subset of $\overline{\Om}^d$. Note that in our setting, curves in
	$\Gamma_\infty$ are the restriction to $\Om$ of the curves in $\Gamma$ studied in Proposition~\ref{thm:bdry-mod},
	and so by~\eqref{eq:chain-rule}, the domain $\Om$ is $p$-parabolic in the sense of~\cite{Hansevi} if and only if
	$\Mod_p^\pip(\Gamma)=0$. The domain is said to be \emph{$p$-hyperbolic} if it is not $p$-parabolic.
	
	The following theorem extends the result of~\cite{Hansevi} to boundary data in Besov classes.
	
	\begin{theorem}\label{thm:parabolicDirich}
		Let $1< p<\infty$, $(\Om,d,\mu)$ be a doubling metric measure space satisfying a $p$-Poincar\'{e} inequality such that $(\Om,d)$ is a locally compact, non-complete, unbounded uniform domain with bounded boundary, and $\nu$ a Borel regular measure that is $\theta$-codimensional with respect to $\mu$ for some $0<\theta<p$.  
		
		Suppose that $\Om$ is $p$-parabolic. Then, for every $f\in B^{1-\theta/p}_{p,p}(\partial\Om,\nu)$, there is a unique
		function $u\in D^{1,p}(\Om,\mu)$ such that
		\begin{itemize}
			\item $u$ is $p$-harmonic in $(\Om,d,\mu)$,
			\item $Tu=f$ on $\partial\Om$ $\nu$-a.e..
		\end{itemize}
	\end{theorem}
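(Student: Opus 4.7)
The plan is to reduce the Dirichlet problem on the unbounded space $(\Om,d,\mu)$ to a boundary-value problem on the bounded space $(\overline{\Om_\pip}^\pip,d_\pip,\mu_\pip)$, which by Theorems~\ref{thm:double},~\ref{thm:PI-no-infty} and Proposition~\ref{prop:compactness} is a compact doubling metric measure space supporting a $p$-Poincar\'e inequality. Given $f\in B^{1-\theta/p}_{p,p}(\partial\Om,\nu)$, I first apply Proposition~\ref{prop:Besov-trace} to obtain an extension $F:=Ef\in N^{1,p}(\overline{\Om_\pip}^\pip,d_\pip,\mu_\pip)$ with $TF=f$ $\nu$-a.e. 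I then consider
\[
\mathcal{A}_f:=\{v\in N^{1,p}(\overline{\Om_\pip}^\pip,d_\pip,\mu_\pip):\,Tv=f\ \nu\text{-a.e.\ on }\partial\Om\},
\]
which is nonempty, convex, and closed under weak $N^{1,p}$-convergence (by boundedness and continuity of the trace). Minimizing $\mathcal{E}(v):=\int g_{v,\pip}^p\,d\mu_\pip$ over $\mathcal{A}_f$ via the direct method succeeds: reflexivity of $N^{1,p}$ for $1<p<\infty$, lower semicontinuity of $\mathcal{E}$, and coercivity arising from the global $(p,p)$-Poincar\'e inequality on the bounded domain (Remark~\ref{D=P} together with Theorem~\ref{thm:PI-no-infty}) produce a minimizer $u$, unique within $\mathcal{A}_f$ by strict convexity of $\mathcal{E}$ for $1<p<\infty$.

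Next I transport $u$ back to a solution of the original problem. Since any compactly supported perturbation of $u$ in $\Om_\pip$ stays in $\mathcal{A}_f$, the minimizer is a $p$-minimizer on $\Om_\pip$ in the sense of Definition~\ref{def:harm}; in particular it is $p$-harmonic on the open set $\Om_\pip\setminus\{\infty\}$, and Proposition~\ref{prop:harm-harm} then yields $p$-harmonicity of $u$ in $(\Om,d,\mu)$. The identity~\eqref{eq:chain-rule-integral} gives $\int_\Om g_{u,d}^p\,d\mu=\int_{\Om_\pip}g_{u,\pip}^p\,d\mu_\pip<\infty$, so $u\in D^{1,p}(\Om,\mu)$; and Proposition~\ref{prop:Dp-trace} identifies the two notions of trace, confirming $Tu=f$ $\nu$-a.e.

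For uniqueness under $p$-parabolicity, let $u_1,u_2$ be two solutions; by Lemma~\ref{lem:D-to-N} and~\eqref{eq:N=N} each $u_i$ lies in $N^{1,p}(\overline{\Om_\pip}^\pip,d_\pip,\mu_\pip)$. Via~\eqref{eq:chain-rule}, $p$-parabolicity means $\Mod_p^\pip(\Gamma)=0$ for curves $\Gamma$ in $\Om_\pip$ ending at $\infty$; combined with $\mu_\pip(\{\infty\})=0$ and the characterisation of zero Sobolev capacity through modulus mentioned after Definition~\ref{def:cap}, this gives $\capa_p^\pip(\{\infty\})=0$. The main obstacle is to upgrade the local $p$-harmonicity of $u_i$ on $\Om_\pip\setminus\{\infty\}$ to the statement that $u_i$ is a global minimizer of $\mathcal{E}$ on $\mathcal{A}_f$, since then strict convexity forces $u_1=u_2$ $\mu$-a.e.\ on $\Om$. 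My plan is a cutoff argument: using $\capa_p^\pip(\{\infty\})=0$, choose Lipschitz cutoffs $\eta_k:\overline{\Om_\pip}^\pip\to[0,1]$ that vanish on a neighborhood of $\infty$, converge to $1$ $\mu_\pip$-a.e., and satisfy $\int g_{\eta_k,\pip}^p\,d\mu_\pip\to 0$. For any competitor $v\in\mathcal{A}_f$, the function $u_i+\eta_k(v-u_i)$ agrees with $u_i$ on a neighborhood of $\infty$ and has trace $f$ on $\partial\Om$, so testing the local $p$-minimizing property of $u_i$ on $\Om_\pip\setminus\{\infty\}$ against this perturbation, applying the product rule and Leibniz-type estimates for minimal upper gradients, and letting $k\to\infty$ (using dominated convergence together with the vanishing of $\int g_{\eta_k,\pip}^p\,d\mu_\pip$) yields $\mathcal{E}(u_i)\leq\mathcal{E}(v)$, as desired.
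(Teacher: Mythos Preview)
Your existence argument is essentially the paper's: both invoke the direct method on the compact space $(\overline{\Om_\pip}^\pip,d_\pip,\mu_\pip)$ and transport the minimizer back via Proposition~\ref{prop:harm-harm}. The paper makes explicit one point you gloss over, namely that coercivity (and uniqueness of the minimizer) rests on $\capa_p^\pip(\partial\Om)>0$ (Proposition~\ref{prop:bdryCap}): the Poincar\'e inequality alone only bounds $\|v-c_v\|_{L^p}$, and the floating constant must be pinned down by the trace; likewise, strict convexity of $v\mapsto\int g_{v,\pip}^p$ gives uniqueness of the minimal gradient, and positivity of the boundary capacity is what then forces uniqueness of $u$ itself.

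For uniqueness the paper takes a different, shorter route: after observing that any solution $v$ is $p$-harmonic in $\Om_\pip\setminus\{\infty\}$, it invokes the removable-singularity theorem of Bj\"orn~\cite{Bjo} (cf.\ Remark~\ref{rem:7.8}) to conclude that $v$ is $p$-harmonic on all of $\Om_\pip$, and then cites the uniqueness result of~\cite{S2} for the Dirichlet problem on the bounded domain $\Om_\pip$. Your cutoff argument is in effect an attempt to reprove the removable-singularity step by hand, but as written it has two gaps. First, the perturbation $\eta_k(v-u_i)$ is \emph{not} compactly supported in $\Om_\pip\setminus\{\infty\}$: while $\eta_k$ vanishes near $\infty$, the function $v-u_i$ has zero trace on $\partial\Om$ but need not vanish in a neighborhood of $\partial\Om$, so Definition~\ref{def:harm} does not allow you to test against it directly. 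This can be repaired by invoking the density of compactly supported functions in $N^{1,p}_0$ (valid in this doubling--Poincar\'e setting), but it should be said. Second, and more seriously, the Leibniz estimate gives
\[
g_{u_i+\eta_k(v-u_i),\pip}\le(1-\eta_k)g_{u_i,\pip}+\eta_k g_{v,\pip}+|v-u_i|\,g_{\eta_k,\pip},
\]
and passing to the limit requires $\int |v-u_i|^p\, g_{\eta_k,\pip}^p\,d\mu_\pip\to 0$; knowing only $\int g_{\eta_k,\pip}^p\,d\mu_\pip\to 0$ does not yield this unless $v-u_i$ is bounded near $\infty$, so an additional truncation-and-double-limit argument (or an a~priori $L^\infty$ bound) is needed. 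The paper's black-box citation of~\cite{Bjo} packages precisely these technicalities.
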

	
	\begin{proof}
		Recall from Proposition~\ref{lem:D-to-N} that 
		\[
		D^{1,p}(\Om,d,\mu)=N^{1,p}(\Om_\pip,d_\pip,\mu_\pip).
		\]
		Hence it suffices to find a function $u\in N^{1,p}(\Om_\pip,d_\pip,\mu_\pip)$ that is $p$-harmonic in
		$(\Om_\pip,d_\pip,\mu_\pip)$, for $p$-harmonicity in this metric measure space implies
		$p$-harmonicity in the original metric measure space $(\Om,d,\mu)$ thanks to 
		Proposition~\ref{prop:harm-harm}.
		
		Since $f\in B^{1-\theta/p}_{p,p}(\partial\Om,\nu)$, and this Besov space is the trace space of $N^{1,p}(\Om_\pip)$ by
		the proof of Proposition~\ref{prop:Dp-trace}, and as $\capa_p^\pip(\partial\Om)>0$ by Proposition~\ref{prop:bdryCap},
		the direct method of calculus of variations as described in the comment before the statement of the theorem applies; note
		that we also have $(\Om_\pip,d_\pip,\mu_\pip)$ is doubling and supports a $p$-Poincar\'e inequality. Thus
		there is a function $u\in N^{1,p}(\Om_\pip,d_\pip,\mu_\pip)$ that is $p$-harmonic in $\Om_\pip$ with respect to
		the metric $d_\pip$ and the measure $\mu_\pip$, such that $Tu=f$.
		
		Now suppose that $v$ is also a $p$-harmonic function in $(\Om,d,\mu)$ with $Tv=f$. We also have that $v$ is
		$p$-harmonic in $\Om_\pip\setminus\{\infty\}$ by Proposition~\ref{prop:harm-harm}. As $\capa_p^\pip(\{\infty\})=0$
		by the assumption of $p$-parabolicity, it follows from~\cite{Bjo} (see Remark~\ref{rem:7.8} above) that
		$v$ is $p$-harmonic in $\Om_\pip$. By the uniqueness of solutions to Dirichlet problem in $(\Om_\pip, d_\pip,\mu_\pip)$
		(see~\cite{S2}), we see that necessarily $v=u$ in $\Om_\pip$ and hence in $\Om$. This demonstrates the uniqueness of
		the solution.
	\end{proof}
	
	\begin{theorem}\label{thm:hyperbolicDirich}
		Let $1< p<\infty$, $(\Om,d,\mu)$ be a doubling metric measure space satisfying a $p$-Poincar\'{e} inequality such that $(\Om,d)$ is a locally compact, non-complete, unbounded uniform domain with bounded boundary, and $\nu$ a Borel regular measure that is $\theta$-codimensional with respect to $\mu$ for some $0<\theta<p$.  
		
		Suppose that $\Om$ is $p$-hyperbolic. Then, for every $f\in B^{1-\theta/p}_{p,p}(\partial\Om,\nu)$, there is a
		function $u\in D^{1,p}(\Om,\mu)$ such that
		\begin{itemize}
			\item $u$ is $p$-harmonic in $(\Om,d,\mu)$,
			\item $Tu=f$ on $\partial\Om$ $\nu$-a.e.,
			\item $\lim_{\Om\ni y\to\infty}u(y)$ exists as a real value. 
		\end{itemize}
	\end{theorem}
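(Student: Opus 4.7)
The plan is to transfer the problem to the compact doubling $p$-Poincar\'e metric measure space $(\overline{\Om_\pip}^\pip, d_\pip, \mu_\pip)$ (Theorems~\ref{thm:double} and~\ref{thm:PI-no-infty}, Proposition~\ref{prop:compactness}) and to solve a Dirichlet problem on the bounded uniform domain $\Om_\pip\setminus\{\infty\}$ (Theorem~\ref{thm:uniform}), whose relative boundary in $\overline{\Om_\pip}^\pip$ is $\partial\Om\cup\{\infty\}$. By Proposition~\ref{lem:D-to-N} and~\eqref{eq:N=N}, we have $D^{1,p}(\Om,d,\mu)=N^{1,p}(\Om_\pip,d_\pip,\mu_\pip)$, so it suffices to construct $u$ as a $p$-harmonic function in the transformed space; Proposition~\ref{prop:harm-harm} will then yield $p$-harmonicity in $(\Om,d,\mu)$. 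Crucially, the $p$-hyperbolicity assumption is equivalent to $\capa_p^\pip(\{\infty\})>0$ via~\eqref{eq:chain-rule-integral} and Remark~\ref{rem:capmod}, so $\{\infty\}$ is a genuine boundary point at which point values of $N^{1,p}$-functions are well defined through their $p$-quasicontinuous representatives.

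We fix an arbitrary $a\in\R$ and construct a solution whose limit at $\infty$ equals $a$. Starting from the Besov extension $F:=Ef\in N^{1,p}(\Om_\pip,d_\pip,\mu_\pip)$ supplied by Proposition~\ref{prop:Besov-trace}, apply a $d_\pip$-Lipschitz cutoff $\eta$ that equals $1$ on $\{y:d_\pip(y,\infty)\geq r_0\}$ and $0$ on $B_{d_\pip}(\infty,r_0/2)$ to form $F_a:=\eta F+(1-\eta)a$; then $F_a\in N^{1,p}(\overline{\Om_\pip}^\pip)$ with $TF_a=f$ on $\partial\Om$ and $F_a(\infty)=a$. Next, minimize the $p$-energy $v\mapsto\int g_{v,\pip}^p\,d\mu_\pip$ over the affine class
\[
\mathcal{A}_a:=\bigl\{v\in N^{1,p}(\overline{\Om_\pip}^\pip):v-F_a\in N^{1,p}_0(\Om_\pip\setminus\{\infty\})\bigr\}
\]
via the direct method: the Poincar\'e inequality on the bounded space $\overline{\Om_\pip}^\pip$ controls the $L^p$-norm of a minimizing sequence by its energy, yielding a minimizer $u\in\mathcal{A}_a$ as in~\cite{S2}. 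The minimizer $u$ is a $p$-energy minimizer, hence $p$-harmonic, on $\Om_\pip\setminus\{\infty\}$ with respect to $(d_\pip,\mu_\pip)$; Proposition~\ref{prop:harm-harm} upgrades this to $p$-harmonicity in $(\Om,d,\mu)$, and $Tu=f$ on $\partial\Om$ holds by construction.

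The remaining step, and the main obstacle, is to show $\lim_{\Om\ni y\to\infty}u(y)=a$. By Lemma~\ref{lem:dist-to-infty}, a sequence $y_k\in\Om$ satisfies $y_k\to\infty$ in $(\Om,d)$ (equivalently $d_\Om(y_k)\to\infty$, since $\partial\Om$ is bounded) precisely when $d_\pip(y_k,\infty)\to 0$ in $(\Om_\pip,d_\pip)$, so the claim reduces to showing that $\infty$ is a regular boundary point of the uniform domain $\Om_\pip\setminus\{\infty\}$ with respect to $(d_\pip,\mu_\pip)$. Here we plan to invoke Kellogg's theorem in the metric setting (see, e.g.,~\cite{BB}), which states that the set of irregular boundary points of a bounded uniform domain in a doubling $p$-Poincar\'e space has zero $p$-capacity; since $\capa_p^\pip(\{\infty\})>0$, the singleton $\{\infty\}$ cannot be irregular, so $\infty$ is regular and $u$ attains the value $a$ as a continuous limit. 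The principal technical hurdle is identifying the variational solution with prescribed $N^{1,p}_0$-boundary values at $\infty$ with the Perron solution there, so that Kellogg's regularity applies; once this identification is made, the structural prerequisites---uniformity of $\Om_\pip\setminus\{\infty\}$, doubling and $p$-Poincar\'e for $\overline{\Om_\pip}^\pip$, and positivity of $\capa_p^\pip(\{\infty\})$---are all supplied by the results of Sections~\ref{sec:doubling} through~\ref{Sect:PI}.
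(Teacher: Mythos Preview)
Your proposal is correct and reaches the goal by a somewhat different route than the paper. The paper first solves the Dirichlet problem on the \emph{full} domain $\Om_\pip$ exactly as in the $p$-parabolic case (Theorem~\ref{thm:parabolicDirich}), producing a solution $p$-harmonic even across $\infty$; it then takes an arbitrary $p$-harmonic solution $v$, observes that $\capa_p^\pip(\{\infty\})>0$ forces $\infty$ to be a Lebesgue point of $v\in N^{1,p}(\Om_\pip)$, extends $f$ to $\infty$ by that Lebesgue value, and invokes regularity of $\infty$ (citing Corollary~\ref{cor:relcap-lowerbd}) to obtain the continuous limit. You instead prescribe an arbitrary value $a$ at $\infty$ from the outset and minimize over $\mathcal{A}_a$ on $\Om_\pip\setminus\{\infty\}$, which directly manufactures the parametrized family of solutions that the paper only describes in the remark following the theorem; your justification of regularity of $\infty$ via Kellogg's property (the irregular set has zero $p$-capacity, hence cannot contain $\{\infty\}$) is arguably more transparent than the paper's citation. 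The paper's route buys economy---it reuses the parabolic construction verbatim---and the slightly stronger conclusion that \emph{every} solution has a limit at $\infty$; your route buys a direct construction of a solution with any prescribed limit. The hurdle you flag about identifying the variational minimizer with the Perron solution is standard once $F_a\in N^{1,p}(\overline{\Om_\pip}^\pip)$ is continuous at the isolated boundary point $\infty$; see~\cite{BB}.
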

	
	In the above theorem, by $\lim_{\Om\ni y\to\infty}u(y)$ we mean a real number $\tau$ such that
	for some (and hence each) $x_0\in\Om$, for each $\eps>0$ we can find $R>0$ such that 
	$|u(y)-\tau|<\eps$ whenever $y\in\Om$ with $d(x_0,y)>R$.
	
	\begin{proof}
		As in the proof of Theorem~\ref{thm:parabolicDirich}, we see that solutions do exist, but unlike in that theorem, we do
		not have uniqueness. However, as $\Om$ is $p$-hyperbolic, we have that $\capa_p^\pip(\{\infty\})>0$.
		Let $v$ be any such $p$-harmonic solution; then $v\in N^{1,p}(\Om_\pip,d_\pip,\mu_\pip)$ by 
		Proposition~\ref{lem:D-to-N}.
		Thus, extending $f\in B^{1-\theta/p}_{p,p}(\partial\Om,\nu)$ to $\infty$ by setting
		\[
		f(\infty):=\lim_{r\to 0^+}\vint_{B_{d_\pip}(\infty,r)}v\, d\mu_\pip,
		\]
		and noting that $p$-capacity almost every point is a Lebesgue point of $v$ (see~\cite[Theorem~9.2.8]{HKST}), the above
		extension is well-defined. Thus $v$ solves the Dirichlet problem on $(\Om_\pip\setminus\{\infty\}, d_\pip,\mu_\pip)$
		with boundary data as the extended function $f$, with $f$ continuous at (and hence, as $\infty$ is an isolated
		point of $\partial[\Om_\pip\setminus\{\infty\}]$, in a neighborhood of) $\infty$. By Corollary~\ref{cor:relcap-lowerbd},
		we know that $\infty$ is a regular point for the domain $(\Om_\pip\setminus\{\infty\}, d_\pip,\mu_\pip)$.
		Hence $\lim_{\Om\ni y\to\infty}v(y)=f(\infty)$ exists.
	\end{proof}
	
	We in fact obtain more; if $v$ and $u$ are both solutions to the original problem, and
	if $\lim_{\Om\ni y\to\infty}v(y)=\lim_{\Om\ni y\to\infty}u(y)$, then necessarily $v=u$ in $\Om$ by the
	uniqueness of solutions to Dirichlet problem in $(\Om_\pip\setminus\{\infty\}, d_\pip,\mu_\pip)$ with 
	boundary data on $\partial[\Om_\pip\setminus\{\infty\}]=\partial\Om\cup\{\infty\}$. Moreover, of all the solutions
	to the Dirichlet problem on $\Om$ with boundary data $f$, there is only one solution that is also a solution in
	the domain $\Om_\pip$ with respect to the metric $d_\pip$ and the measure $\mu_\pip$.

	\section{Some illustrative examples}
	
	In this section, we consider some examples.
	
	\begin{example}
		Let $Z=[-1,1]\times[0,\infty)$, equipped with the restriction of the Euclidean metric and the $2$-dimensional Lebesgue
		measure from $\R^2$, and $\Om=[-1,1]\times(0,\infty)$. Then $\partial\Om=[-1,1]\times\{0\}$ is bounded. We have that
		the measure on $Z$ is doubling and supports the strongest of all Poincar\'e inequality, the $1$-Poincar\'e inequality. Moreover,
		$\Om$ is a uniform domain that is also unbounded. For $\beta>1$, with the choice of $\pip(t)=\min\{1,t^{-\beta}\}$ for
		$t>0$, the domain $\Om$ is transformed into $\Om_\pip=\Om\cup\{\infty\}$. Note that in the proof of 
		Proposition~\ref{thm:bdry-mod}, we need only consider the indices $Q_\mu^-$ and $Q_\mu^+$ corresponding to large values of
		$r$ and $R$; and in this case, we can set $Q_\mu^-=Q_\mu^+=1$, though when considering all scales of $r$,
		we have $Q_\mu^-=2$ and
		$Q_\mu^+=1$. So reading the hypotheses of Proposition~\ref{thm:bdry-mod} in this setting, when $1=Q_\mu^{\pm}<p$,
		we are in the case~(2) of the proposition, and then $\Mod_p^\pip(\Gamma)=0$. It follows that $\Om$ is $p$-parabolic,
		and the solution to the Dirichlet problem on $\Om$ is unique. Note that $\nu=\mathcal{H}^1\vert_{[-1,1]\times\{0\}}$
		has co-dimensional relation with respect to the $2$-dimensional Lebesgue measure at scales $0<r\le R_0$ with
		$\theta=1$. Solutions to the Dirichlet problem satisfy a homogeneous Neumann condition on the half-lines $\{\pm 1\}\times(0,\infty)$
		when seen as a function on $(-1,1)\times(0,\infty)$. For $p>1=\theta$, the Dirichlet problem is always solvable,
		and the solution is unique.
	\end{example}
	
	\begin{example}
		$Z:=\{(x,y)\in\R^2:y\ge\max\{0,|x|-1\} \}$ again be equipped the restriction of the Euclidean metric and Lebesgue 
		measure from $\R^2$, and $\Om=Z\setminus[-1,1]\times\{0\}$.
		In this case, at all scales of $r$ and $R$, we have $Q_\mu^-=Q_\mu^+=2$, and $\Om$ is $p$-parabolic when $p\ge 2$
		and is $p$-hyperbolic when $1\le p<2$. Moreover, we again have $\theta=1$. Here solutions to the Dirichlet
		problem on $\Om$ satisfy a homogeneous Neumann condition on the rays $\{(x,x-1)\, :\, x>1\}$ and
		$\{(x,1-x)\, :\, x<-1\}$. Here, for $p>1$ the Dirichlet problem is always solvable, but the solution is unique only when
		$p\ge 2$.
		
		On the other hand, if, with $Z$ as above, we have $\Om=Z\setminus K\times\{0\}$ with $K$ the standard $1/3$-rd Cantor set,
		then again $\Om$ is a uniform domain that is $p$-parabolic when $p\ge 2$ and $p$-hyperbolic when $1\le p<2$,
		but now 
		with $\nu$ the $\tfrac{\log 2}{\log 3}$-dimensional Hausdorff measure supported on $K\times\{0\}=\partial\Om$, we have that
		$\theta=2-\tfrac{\log 2}{\log 3}>1$. In this case, the Dirichlet problem is always solvable when $p>2-\tfrac{\log 2}{\log 3}$,
		but the solution is unique only when $p\ge 2$.
		Solutions to the Dirichlet problem on $\Om$ satisfy the homogeneous
		Neumann condition on the two above-mentioned rays, but in addition, they also satisfy that Neumann condition on
		$([-1,1]\setminus K)\times\{0\}$.
	\end{example}
	
	In contrast to the above examples, the domain $\Om=\{(x,y)\in \R^2\, :\, |y|>\max\{0,|x|-1\}\}\cup (-1,1)\times\{0\}$,
	considered as a domain in $Z=\R^2$, is not a uniform domain and hence the mechanisms developed in this paper do
	not apply to such $\Om$. However, from the fact that this domain is obtained by gluing two copies of the second example
	domain along their common boundary, we see that $\Om$ is $p$-hyperbolic precisely when $1\le p<2$, and this
	example indicates that $\overline{\Om}$ should have a two-point compactification rather than the one-point compactification
	considered here. We will not address this issue further in the present paper.
	
	\begin{example}
		With $Z=\R^2$ equipped with the $2$-dimensional Lebesgue measure and the Euclidean metric, let 
		$\Om=Z\setminus (K\times K)$, where $K$ is the standard $1/3$-rd Cantor set. Then $\Om$ is a uniform domain, and it is $p$-hyperbolic precisely when
		$1\le p<2$. In this case, $\nu$ is the restriction, to $\partial\Om=K\times K$, of the $\tfrac{\log 2}{\log 3}$-dimensional
		Hausdorff measure; thus $\theta=2[1-\tfrac{\log 2}{\log 3}]<1$. In this case the Dirichlet problem is solvable for
		each $p>1$, but the solution is unique only for $p\ge 2$.
	\end{example}
	
	\section*{Conflict of interest and data availability}
	
	On behalf of all authors, the corresponding author states that there is no conflict of interest. Data sharing 
	is not applicable to this article as no datasets were generated or analyzed during the current study.

	\noindent Address:\\
	
	\noindent R.G.: Department of Mathematical Sciences, P.O. Box 210025, University of
	Cincinnati, Cincinnati, OH 45221--0025, U.S.A. \\
	\noindent E-mail: {\tt ryan.gibara@gmail.com}\\
	
	\noindent R.K.: Aalto University
	Department of Mathematics and Systems Analysis, P.O.~Box~11100,  FI-00076 Aalto,  Finland. \\
	\noindent E-mail: {\tt riikka.korte@aalto.fi}\\
	
	\noindent N.S.: Department of Mathematical Sciences, P.O. Box 210025, University of
	Cincinnati, Cincinnati, OH 45221--0025, U.S.A. \\
	\noindent E-mail: {\tt shanmun@uc.edu} 
	
\end{document}